\newtheorem{definition}{Definition}[section] 
\newtheorem{theorem}{Theorem}[section]
\newtheorem{lemma}[theorem]{Lemma}
\newtheorem{remark}[theorem]{Remark}
\newtheorem{assumption}[theorem]{Assumption}
\newcommand{\ie}{\setlength{\parskip}{0cm} \setlength{\itemsep}{0cm}}
\DeclareMathAlphabet{\mathcal}{OMS}{cmsy}{m}{n}
\newcommand{\la}{\left(}
\newcommand{\ra}{\right)}
\newcommand{\lb}{\left\langle}
\newcommand{\rb}{\right\rangle}
\def\wh{\widehat}
\def\ol{\overline}
\def\E{\mathbb E}
\def\P{\mathbb P}
\def\b{\mathbb}
\def\bal{\begin{equation*}\begin{aligned}} 
		\def\eal{\end{aligned}\end{equation*}}
\def\be{\begin{equation}\label}
	\def\ee{\end{equation}}
\def\bd{\begin{definition}\label}
	\def\ed{\end{definition}}
\def\bt{\begin{theorem}\label}
	\def\et{\end{theorem}}
\def\bl{\begin{lemma}\label}
	\def\el{\end{lemma}}
\def\R{\mathbb R}
\def\Z{\mathbb Z}
\def\L{\mathbb L}
\def\H{\mathbb H}
\DeclareMathOperator*{\essup}{ess\,sup}
\date{}
\begin{document}
\title[Landau-Lifshitz-Slonczewski equation]
{Existence, uniqueness and regularity of solutions to the stochastic Landau-Lifschitz-Slonczewski equation}
\author{Beniamin Goldys}
\address{School of Mathematics and Statistics, The University of Sydney, Sydney 2006, Australia}
\email{Beniamin.Goldys@sydney.edu.au}
\author{Chunxi Jiao}
\address{School of Mathematics and Statistics, The University of Sydney, Sydney 2006, Australia}
\email{chunxi.jiao@sydney.edu.au}
\author{Kim Ngan Le}
\address{School of MAthematics, Monash University, VIC 3800, Australia}
\email{ngan.le@monash.edu}
\thanks{This  work was 
supported by  the Australian Research Council Project DP200101866. 
} 
\keywords{nanowire, spin transfer torque, stochastic Landau-Lifshitz-Slonczewski equation, transport noise, pathwise solutions}
\subjclass{35D30, 35K55, 35K59, 35Q49, 35Q60, 60H15}
\begin{abstract}
In this paper we are concerned with the stochastic Landau-Lifshitz-Slonczewski equation (LLS) that describes magnetisation of an inifnite nanowire evolving under current driven spin torque. The current brings into the system a multiplicative gradient noise that appears as a transport term in the equation. We prove the existence, uniqueness and regularity of pathwise solutions to the equation. 
\end{abstract}
\maketitle
\tableofcontents



\section{Introduction}
	In this paper we are concerned with the existence, uniqueness and regularity of solutions to the stochastic Landau-Lifshitz-Slonczewski equation (LLS) equation considered on real line, see \eqref{eq_spde}.  To the best of our knowledge , this is the first  result on a system of stochastic PDEs that combines variational structure with the transport noise in the presence of geometric constraints, more precisely, with solutions taking values in a sphere. 
	\par
	Let us recall briefly the physical motivation for LLS equation, see \cite{bertotti,mp} for more details. 
	A deterministic version of equation \eqref{eq_spde} was introduced in \cite{slon} in order to study the magnetisation dynamics of ferromagnetic elements in presence of electric current. If the ferromagnetic element is small enough (100 nanometers) then the interaction between the electric current and the magnetisation results in the current-induced magnetisation switching and spin wave emission. It is expected that good understanding of those effects will allow us to develop new types of current-controlled magnetic memories and current controlled magnetic oscillators. 
	\par
	Mathematical theory of the deterministic LLS equation is at an early stage. The case, when the ferromagnetic material fills in a 3-dimensional domain is studied in an important paper \cite{mp}, where the existence and uniqueness of solutions is proved and their regularity is studied. A physically important case of a nanowire is a subject of ongoing intense research in physics. Mathematical analysis of dynamics of travelling domain walls and their stability was only recently initiated in \cite{mel-rad,rad,siemer}. 
	\par
	The necessity to include random fluctuations (such as thermal noise) into the dynamics of magnetisation has been conjectured by physicists for many years (see for example \cite{bertotti,brown, neel}. The existence and uniqueness of solutions for the Landau-Lifshitz equation without the Slonczewski term but including random fluctuations was intensely studied in recent years  \cite{Brzezniak2013_sLLG,ngan1,ngan2}. 
	\par
	Let us first recall briefly the formulation of the deterministic LLS equation. We will identify an infinite nanowire made of ferromagnetic material with a real line $\R$ and will denote by $m(t,x)\in\R^3$ the magnetisation vector at a time $t\ge 0$ and at a point $x\in\R$. For temperatures below the Curie point the length $|m(t,x)|$ of this vector is constant in $(t,x)$ \cite{brown}, hence can be assumed equal to 1: 
	\begin{align*}
		m: [0,\infty) \times \mathbb{R} \to \mathbb{S}^2.
	\end{align*}
	The LLS equation proposed in \cite{slon} describes the dynamics of the magnetisation vector subject to the spin-velocity field (electric current):
	\begin{align*}
		v: (0,\infty) \times \mathbb{R} \to \mathbb{R}\,.
	\end{align*}
	It takes the form 
	\begin{equation}\label{LLS_Gilbert_full}
		\partial_t m = -m\times \partial_{xx}m -\alpha m\times\la m\times \partial_{xx}m \ra -v \partial_x m + \gamma m\times\la v \partial_x m \ra \,,
	\end{equation}
	with $\alpha>0$ and $\gamma\in\R$.  
	The term $ v \partial_x m $ is known as the adiabatic term and the non-adiabatic term is given by $ \gamma m \times (v \partial_x m) $. 
	For more details on the form of this equation see \cite{mp}. 
	\vskip0.05cm\noindent
	We will consider a version of equation \eqref{LLS_Gilbert_full} with the spin-velocity field perturbed by noise: 
	\begin{equation}\label{eq_spde}
		\partial_t m= -m\times\partial_{xx}m-\alpha m\times\la m\times\partial_{xx}m\ra-\partial_x m \circ(v\,dt+dW)+ \la m\times \gamma \partial_x m\ra\circ(v\,dt+dW)\,,
	\end{equation}
	where $W$ is an infinite-dimensional Wiener process taking values in an appropriate function space. We emphasise, that noise arises in equation \eqref{eq_spde} in a way very different way from the way it appears in stochastic Landau-Lifshitz equations studied in  \cite{Brzezniak2013_sLLG,bgj,ngan1,ngan2}. While in the aforementioned papers it is a thermal noise arising inside the magnetic domain and has bounded diffusion coefficient, in \eqref{eq_spde} it is a transport noise brought into the system by the electric current, and has the gradient of the solution as a diffusion coefficient. Therefore, analysis of this equation requires  more delicate arguments. We will show that for every initial condition $m_0$ with 
	\begin{equation}\label{eq: m0 condition}
		\left|m_0(x)\right|=1,\quad \int_\R\left|\partial_x m_0 \right|^2\,dx<\infty\,,
	\end{equation}
	there exists a unique pathwise and strong in PDEs sense solution to \eqref{eq_spde}. We will use the observation made in \cite{mp} that 
	under the constraint $ |m(t,x)|=1 $ for all $ (t,x) \in (0,\infty) \times \mathbb{R} $, we have 
	\[\partial_x m =-m\times\la m\times\partial_x m\ra\,,\]
	hence 
	equation \eqref{eq_spde} can be written in the form
	\begin{equation}\label{sLLG}
		\begin{aligned}
			dm
			&= -m \times \la \partial_{xx}m + \alpha m \times \partial_{xx}m \ra dt 
			+ m \times \la m \times \partial_x m + \gamma \partial_x m \ra \circ\la v \,dt+\,dW\ra,
		\end{aligned}
	\end{equation} 
	with $ m(0) = m_0 $ satisfying \eqref{eq: m0 condition}. 	
	We will assume that $W$ is a Wiener process taking values in $H^2(\R)$ and will prove the existence and uniqueness of pathwise solutions to this equation, see Theorem \ref{Theorem: existence} for details. Due to the presence of gradient noise of multiplicative type, we can prove this theorem only for the Wiener process with the covariance small enough, see Theorem \ref{Theorem: existence} for precise formulation. 
	Let us comment on the proof of this theorem. We start with the formulation of a semidiscrete approximation scheme that allows to construction approximating solutions that satisfy the sphere constraint. The same approach was used in \cite{mp} to study the deterministic equation \eqref{LLS_Gilbert_full}. Then we obtain a set of uniform estimates for the approximate solutions. This step requires using quadratic interpolations and is technically much more complicated than in the case of the stochastic Landau-Lifshitz equation without transport. Next, we follow compactness type argument to prove the existence of a limiting point that is a strong in PDEs sense solution to stochastic LLS equation. Then we show uniqueness of pathwise solutions and use the Yamada-Watanabe theorem in the same way as in \cite{Brzezniak2013_sLLG}.

\section{Semi-discrete scheme and the main result}\label{Section: stochastic formulation}
\subsection{Notation}
\subsubsection{Function spaces}
	Let $ \rho_w(x) = (1+x^2)^{-w} $ for $ w \geq 0 $. Clearly, 
	\begin{equation}\label{rhow properties}
		\rho_w(x) \in (0,1], \quad 
		|\rho_w'(x)| \leq w \rho_w(x),
	\end{equation}
	and for $ w > \frac{1}{2} $, $ \int_\R \rho_w(x) \ dx<\infty $. 
	For $p\in[1,\infty)$, define the weighted Lebesgue space $ \L^p_w $ by 
	\begin{align*}
		\L^p_w &= \left\{ f: \R \to \R^3;\, \int_{\R} |f(x)|^p \rho_w(x)\, dx < \infty \right\}.
	\end{align*}
	If $w=0$, then we will write $ \L^p$ instead of $ \L^p_0$. 
	We will denote by $ \H_w^1$ the Hilbert space
	\[\b H_w^1=\left\{f\in\b L_w^2;\, Df\in\b L_w^2\right\}\,.\]
	Let $0<w_1 < w_2 $. 
	Then $ \rho_{w_1} > \rho_{w_2} $ and 
	the embeddings $ \L^2 \hookrightarrow \L^2_{w_1} \hookrightarrow \L^2_{w_2} $ are continuous with
	\begin{align*}
		|f|_{\L^2_{w_2}} \leq |f|_{\L^2_{w_1}} \leq |f|_{\L^2}, \quad \forall f \in \L^2.
	\end{align*}

	Moreover, the embeddings 
	\[ \H_{w_1}^1 \hookrightarrow \L^2_{w_2}\quad\text{and}\quad \L^2_{w_1} \cap \mathring{\H}^2 \hookrightarrow \H_{w_2}^1\]
	are compact, where $\mathring{\b H}^2$ stands for a standard homogeneous Sobolev space of functions $f:\R\to\R^3$ with weak derivatives $ Df, D^2 f \in \L^2 $.
	The Laplace operator $\Delta$ considered in $\b L^2_w$ with the domain $\b H^2_w$ is variational and the operator 
	$ A_1 = I-\Delta $ is invertible. 
	For $ \beta> 0 $, let $ \b H^\beta_w $ denote the domain of $ A_1^{\beta/2} $ endowed with the  norm $ |\cdot|_{\b H^{\beta}_w} := |A_1^{\beta/2} \cdot|_{\L^2_w} $. 
	and with dual space $ \b H^{-\beta}_w $. 

\subsubsection{Assumption and Notation}
	Let $W$ be an $H^2(\R)$-valued Wiener process with the covariance operator $Q$. Then there exists a complete orthonormal sequence $\left\{f_j;\,j\ge 1\right\}$ of $H^2(\R)$ made of eigenvectors of $Q$, that is 
	\[Qf_j=q^2_jf_j,\quad q^2:=\sum_jq^2_j<\infty\,,\]
	and then we have 
	\[W(t)=\sum_{j=1}^\infty q_jW_j(t)f_j\,.\]
	The following is a standing assumption for the rest of the paper and it will not be enunciated again. 
	\begin{assumption}\label{hyp1}
		\begin{equation}\label{Ckappa bound}
			C_\kappa^2:=\left|\sum_{j=1}^\infty q_j^2 \la f_j^2 + (f_j^\prime)^2 + (f_j^{\prime \prime})^2 \ra \right|_{\L^\infty}<\infty,
		\end{equation}
	and  
		$ v $ is in $ \mathcal{C}([0,T]; H^1(\R)) $ with
		\begin{equation}\label{Cv bound}
			C_v:=\essup_{t \in \R_+} |v(t)|_{\L^\infty}<\infty\,.
		\end{equation}
	\end{assumption}
	Define a function 
	\[\kappa^2(x)=\sum_{j=1}^\infty q_j^2f_j^2(x),\quad x\in\R\,.\]
	\begin{remark}\label{rem1}
	(a) Assumption \ref{hyp1} yields
		\[|\kappa|_{\L^\infty} \leq C_\kappa,\quad\mathrm{and}\quad |\kappa \kappa^\prime|_{\L^\infty} \leq C_\kappa^2\,.\] 
		
	(b) 
		Every $\b H^2$-valued finite-dimensional Wiener process satisfies \eqref{Ckappa bound} provided $f_j^{\prime \prime} \in\b L^\infty$ for $j\ge 1$. 
	\end{remark}
	The following notations will be used throughout the paper. Let $ G: \L^\infty\cap\mathring{\H}^1\to \L^2$ be defined as 
	\[G(m)=  m \times \la m \times \partial_x m \ra+ m\times \gamma \partial_x m\,.\]
	Let $ \mathcal{G}(m) := G^\prime(m) (G(m)) $,
	which can be expressed as
	\begin{align*}
		\mathcal{G}(m) 
		&= (\gamma^2 - |m|^2) m \times \la m \times \partial_{xx}m \ra -2 \gamma |m|^2 m \times \partial_{xx}m - \gamma^2 \partial_x m \times \la m \times \partial_x m \ra \\
		&\quad - \left|m \times \partial_x m\right|^2 m + \gamma \langle m, \partial_x m \rangle m \times \partial_x m. 
	\end{align*}
	
	In the rest of the paper we consider equation \eqref{sLLG} in its It{\^o} form: 
	\begin{equation}\label{eq_ito}
			dm=\la F(m)+ \frac{1}{2}S(m)\ra\, dt + G(m)\, dW, \quad m(0) = m_0. 
	\end{equation}	
	Here, 
	\begin{align*}
		F(m)&:= m \times \la m \times v \partial_x m \ra + \gamma m\times v \partial_x m - m \times \la \partial_{xx} m + \alpha m \times \partial_{xx} m \ra  \,
	\end{align*}
	and the Stratonovich correction term $S(m)$ takes the form
	\begin{align*}
		S(m) 
		&:=\kappa^2\mathcal{G}(m)+ \kappa \kappa^\prime \left[ m\times(m\times G(m))+ \gamma m\times G(m)\right] \\
		&= \kappa^2 \mathcal{G}(m) + \kappa \kappa^\prime \left[ (\gamma^2-|m|^2) m \times \la m \times \partial_x m \ra -2 \gamma |m|^2 m \times \partial_x m \right].
	\end{align*}

\subsection{Semi-discrete scheme}
\subsubsection{Discrete operators and discrete spaces}	
	Let $ \Z_h = \{ x = kh : k \in \Z \} $ denote a discretization of the real line of mesh size $ h > 0 $. 	
	For $ u : \Z_h \to \mathbb{R}^3 $, we write $ u^{\pm}(x) $ for $ u(x \pm h) $, and we introduce discrete gradient and discrete Laplace operators:
	\begin{equation}\label{discrete grad Laplacian}
		\begin{aligned}
			\partial^{h} u = \frac{1}{h} \left( u^+ - u \right) 
			\quad\text{and}\quad
			\Delta^h u = \frac{1}{h} (\partial^{h}u- \partial^{h} u^-).
		\end{aligned}
	\end{equation}	
	Let $ \L_h^\infty $, $ \L_h^p $, $ \H_h^1$ and $ \mathbb{E}_h := \L_h^\infty \cap \mathring\H_h^1 $ be discrete spaces equipped with respective norms:
	\begin{alignat*}{3}
		|u|_{\L_h^\infty} &= \sup_{x \in \Z_h} |u(x)|\,, &&\quad
		|u|_{\L_h^p}^p = h \sum_{x \in \Z_h} |u(x)|^p\,, \\
		|u |_{\E_h}^2 &= | u |_{\L_h^\infty}^2 + |\partial^{h} u|_{\L_h^2}^2\,, &&\quad
		|u|_{\b H_h^1}^2 = |u|_{\L_h^2}^2+ |\partial^{h} u|^2_{\L_h^2}\,, 
	\end{alignat*}
	where $p\in[1,\infty)$. 
	
	We will say that $u:[0,T]\times\Omega\times\b Z_h\to\R^3$ is an $\b E_h$-valued progressively measurable process if for every $x\in\b Z_h$ the process $u(\cdot,x)$ is progressively measurable and for every $t \in [0,T]$, 
	\[|u(t)|_{\b E_h}<\infty,\quad\P\text{-a.s.}\]
	In particular, the process $\left\{|u(t)|_{\b E_h};\,t\ge 0\right\}$ is progressively measurable. 
	
	Let $ \mathcal{E}_h $ denote the space of $ \E_h $-valued progressively measurable processes, with norm
	\begin{align*}
		| u |_{\mathcal{E}_h} = \sup_{t \in [0,T]} \la \E \left[ | u(t) |_{\E_h}^2 \right] \ra^{\frac{1}{2}}.
	\end{align*}
	
\subsubsection{Discrete equation} 
	For $ u \in \mathbb{E}_h $, we define
	\begin{equation}\label{discrete coef}
		\begin{aligned}
			F^h(u) &= u \times \left( u \times v \partial^h u \right) + \gamma u \times v \partial^h u  - u \times \left( \Delta^h u + \alpha u \times \Delta^h u \right) \\	
			G^h(u) &= u \times \la u \times \partial^h u \ra + \gamma u \times \partial^h u\\
			S^h(u) 
			&= \mathcal{G}^h_\kappa(u) + \kappa \kappa' \left[ u\times(u\times G^h(u))+ \gamma u\times G^h(u)\right],
		\end{aligned}
	\end{equation}
	where
	\begin{align*}
		\mathcal{G}^h_\kappa(u) &= \frac{1}{2}\la (\kappa^2)^- + \kappa^2 \ra G^h_1(u) + \kappa^2 G^h_2(u) + (\kappa^2)^- G^h_3(u)\\
		G^h_1(u) &= (\gamma^2 - |u|^2) u \times \la u \times \Delta^h u \ra -2 \gamma |u|^2 u \times \Delta^h u \\
		G^h_2(u) &= - \gamma^2 \partial^h u \times \la u \times \partial^h u \ra - |u \times \partial^h u|^2 u \\
		G^h_3(u) &= 2\gamma \langle u, \partial^h u^- \rangle u \times \partial^h u\,.
	\end{align*}
	Fix a terminal time $ T \in (0,\infty) $, 
	we describe the semi-discrete scheme for~\eqref{eq_ito} as a stochastic differential equation in the space $\E_h$:
	\begin{equation}\label{SDE: discrete}
		\begin{aligned}
			dm^h &= \la F^h(m^h) + \frac{1}{2} S^h(m^h) \ra dt + G^h(m^h) \,dW, \quad 
			m^h(0) = m_0\in\E_h,
		\end{aligned}
	\end{equation}
	In \eqref{discrete coef} and \eqref{SDE: discrete}, $ \kappa^2, v(t) $ and $ W(t) $ are the restrictions of the corresponding functions to $ \Z_h $ for every $ t \in [0,T] $.  
	The term $ S^h(m^h) $ is a discretised and a modified version of the Stratonovich correction $ S(m) $. It is chosen to simplify the proof in Section \ref{Section: uniform estimates} without affecting the limit. For example, $ G_3^h(m^h) $ with the constant $ 2 $ does not match with the term $ \gamma \langle m, \partial_x m \rangle m \times \partial_x m $ in $ \mathcal{G}(u) $, but if $ \langle m^h, \partial^h m^{h-} \rangle $ converges to $ 0 $ in a suitable space, then the constant will be irrelevant to the final equation.

\subsection{Main result}
	\bd{def1}\label{def: sol}
	We say that a progressively measurable process $m$ defined on $\la\Omega,\mathcal F,\la\mathcal F_t\ra,\P, W\ra$ where $ W $ is a Wiener process, 
	is a solution to equation \eqref{eq_ito} if
	\begin{enumerate}[(a)]
		\ie
		\item 
		$|m(t,x)|=1$ $(t,x)$-a.e.
		
		
		\item 
		for every $T \in (0,\infty)$, 
		\[\E\left[\sup_{t\in [0,T]} \left|\partial_x m\right|^2_{\b L^2}+\int_0^T\left|\partial_{xx}m\right|^2_{\b L^2}\,dt \right]<\infty\,,\]
		
		\item 
		for every $t \in [0,T]$ the following equality holds in $\b L^2$\,:
		\be{sLLS_mainthm}
		m(t)-m_0 = \int_0^t\la F(m(s))+\frac{1}{2}S(m(s))\ra\,ds+\int_0^tG(m(s))\,dW(s),\quad \P\text{-a.s.}
		\ee
	\end{enumerate}	
	\ed
	Note that (a)--(d) above and Assumption \ref{hyp1} yield
	\[\int_0^t\la |F(m(s))|_{\b L_2}^2+S(m(s))|_{\b L^2}^2+|\kappa G(m(s))|^2_{\b L^2}\ra\,ds<\infty\,,\]
	hence the Bochner integral and the It\^o integral in \eqref{sLLS_mainthm} are well defined in $\b L^2$. 
	
	\begin{lemma}\label{Lemma: solution of disSDE}
		For every $ h > 0 $, let $ |m_0|_{\E_h} \leq K_0 $. Then there exists a unique solution $ m^h $ of the semi-discrete scheme \eqref{SDE: discrete} in $ \mathcal{E}_h $ satisfying $ |m^h(t,x)|=1 $, $ \P $-a.s. for all $ t \in [0,T] $ and $ x \in \Z_h $. 
	\end{lemma}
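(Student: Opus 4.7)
The plan is to construct $m^h$ by truncating the coefficients to a globally Lipschitz SDE, then using a pointwise It\^o computation to show that the truncation is never activated (sphere preservation), and finally to derive the $\mathcal{E}_h$-bound via Gronwall. For fixed $h>0$, the maps $F^h, G^h, S^h$ are polynomial in the triple $(u(x-h), u(x), u(x+h))$ at every $x\in\Z_h$, hence locally Lipschitz on $\E_h$ with constants depending only on $h$, $|u|_{\L_h^\infty}$, $\alpha$, $\gamma$, $C_\kappa$ and $C_v$.

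\emph{Step 1 (truncated SDE).} Fix $R>1$ and a smooth cutoff $\chi_R:\R_+\to[0,1]$ equal to $1$ on $[0,R]$ and $0$ on $[2R,\infty)$. Replace each coefficient by its truncation $\chi_R(|u|_{\L_h^\infty}) F^h(u)$, etc., so the coefficients become globally Lipschitz in $u\in\E_h$. Since the interactions are nearest-neighbour for fixed $h>0$, standard theory for Lipschitz SDEs applies -- either directly in $\E_h$ or via a finite-volume approximation on $\Z_h^N:=\{kh:|k|\leq N\}$ followed by the limit $N\to\infty$ -- producing a unique global $\mathcal{E}_h$-solution $m^h_R$ of the truncated equation with initial datum $m_0$.

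\emph{Step 2 (sphere preservation).} Apply It\^o's formula to $t\mapsto|m^h_R(t,x)|^2$ at each fixed $x\in\Z_h$. Every summand of $F^h(u)(x)$ and $G^h(u)(x)$ has the form $u(x)\times(\cdot)$, so $\langle u(x), F^h(u)(x)\rangle = \langle u(x), G^h(u)(x)\rangle = 0$, killing both the $F^h$-drift and the martingale term. Since $d\langle W(\cdot,x)\rangle_t = \kappa^2(x)\,dt$, the It\^o correction produced by the noise at the node $x$ is $\kappa^2(x)\,|G^h(m^h_R)(x)|^2\,dt$, so sphere preservation reduces to the algebraic identity
\[
  \langle u(x), S^h(u)(x)\rangle + \kappa^2(x)\,|G^h(u)(x)|^2 = 0 \qquad \text{for every } u\in\E_h,\ x\in\Z_h.
\]
To check it, note that every term of $S^h$ except $\kappa^2 G^h_2(u)$ is again of the form $u(x)\times(\cdot)$ and hence pairs to zero with $u(x)$; the identity $a\times(a\times b) = \langle a,b\rangle a - |a|^2 b$ then gives $\kappa^2\langle u, G^h_2(u)\rangle = -\kappa^2(|u|^2 + \gamma^2)|u\times\partial^h u|^2 = -\kappa^2\,|G^h(u)|^2$. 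Consequently $d|m^h_R(t,x)|^2 = 0$, and $|m_0(x)|=1$ forces $|m^h_R(t,x)|=1$ for all $t\in[0,T]$, $x\in\Z_h$, $\P$-a.s.

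\emph{Step 3 (conclusion).} Since $|m^h_R(t)|_{\L_h^\infty}\equiv 1 < R$ the cutoff is inactive, so $m^h := m^h_R$ solves \eqref{SDE: discrete}. Any other $\mathcal{E}_h$-solution $\tilde m^h$ also satisfies $|\tilde m^h|\equiv 1$ by the same pointwise computation, hence coincides with $m^h_R$ by uniqueness of the truncated SDE. Finally, $\sup_{t\in[0,T]}\E[|m^h(t)|_{\E_h}^2]<\infty$ follows from $|m^h|_{\L_h^\infty}\equiv 1$ combined with It\^o's formula applied to $|\partial^h m^h|_{\L_h^2}^2$: because $|m^h|\equiv 1$, the resulting drift is controlled by a constant multiple of $|\partial^h m^h|_{\L_h^2}^2$, with constant depending on $h, \alpha, \gamma, C_\kappa, C_v$, and Gronwall closes the estimate. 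The main obstacle is the algebraic identity in Step 2; the discretised Stratonovich correction $S^h$ was designed precisely so that this pointwise cancellation holds, and once it is verified the remaining steps are routine SDE theory.
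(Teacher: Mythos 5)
Your overall strategy -- truncate, solve the truncated SDE, show the sphere constraint by a pointwise It\^o computation exploiting the cancellation $\lb u,S^h(u)\rb=-\kappa^2|G^h(u)|^2$, then remove the truncation and close the $\mathcal E_h$-bound by Gr\"onwall -- is exactly the paper's, and your algebraic verification of the key identity in Step~2 is correct. However, Step~1 contains a genuine gap: cutting off only $|u|_{\L_h^\infty}$ does \emph{not} make the coefficients globally Lipschitz on $\E_h$. The $\E_h$-norm also carries $|\partial^h u|_{\L_h^2}$, and the Lipschitz constants of $F^h,G^h,S^h$ depend on it: e.g.\ $F^h(u)-F^h(w)$ contains $(u-w)\times\Delta^h u$, whose $\L_h^2$-norm is bounded by $|u-w|_{\L_h^\infty}\cdot\frac{2}{h}|\partial^h u|_{\L_h^2}$, and $|\partial^h u|_{\L_h^2}$ is not controlled by your cutoff $\chi_R(|u|_{\L_h^\infty})$. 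So ``standard theory for Lipschitz SDEs'' does not apply to your truncated equation as written. The paper avoids this by truncating radially in the \emph{full} $\E_h$-norm ($r^h_n(u)=r^h(nu/|u|_{\E_h})$ for $|u|_{\E_h}>n$), running a Banach fixed-point argument on short intervals, and then removing the truncation via stopping times $\tau_n=\inf\{t:|m_n^h(t)|_{\E_h}>n\}$ together with the Gr\"onwall bound on $|\partial^h m_n^h|_{\L_h^2}$ (which must therefore be proved \emph{before} the truncation is removed, not deferred to the end as in your Step~3 -- otherwise the argument is circular, since global solvability of the untruncated equation is what you are trying to establish).

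A secondary, fixable point: in Step~2 the cancellation only holds where $\chi_R=1$. In the transition region $\chi_R\in(0,1)$ the drift contribution scales like $\chi_R\lb u,S^h(u)\rb$ while the It\^o correction scales like $\chi_R^2\kappa^2|G^h(u)|^2$, so $d|m^h_R(t,x)|^2\neq 0$ there. You need an explicit continuity/stopping-time argument (run the identity up to the first exit time of $|m_R^h|_{\L_h^\infty}$ from $[0,R)$ and show this time is $T$), which is what the paper's $\tau_n$ accomplish. With the truncation replaced by the $\E_h$-radial one and the stopping-time bookkeeping made explicit, your proof becomes the paper's proof.
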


	\bt{th1}\label{Theorem: existence}
	There exists a solution $ (\Omega, \mathcal{F}, \la\mathcal F_t\ra, \P, W, m) $ of \eqref{eq_ito} in the sense of Definition \ref{def: sol}, such that for $ p \in [1,\infty) $,
	\begin{align*}
		\E\left[\sup_{t\in [0,T]} \left|\partial_x m(t)\right|^p_{\b L^2}+\la\int_0^T\left|\partial_{xx}m(t)\right|^2_{\b L^2}\,dt\ra^p \right]<\infty\,,
	\end{align*}
	and for every $T> 0$ and $\alpha\in\la 0,\frac{1}{2}\ra$, 
	\begin{align*}
		m-m_0\in C^\alpha\la [0,T];\,\b L^2\ra, \quad \P\text{-a.s.}
	\end{align*}
	Moreover, there exists a convergent subsequence $ \{m_h\} $ defined on $ (\Omega, \mathcal{F}, \P) $ such that $ m_h $ has the same law as a quadratic interpolation of $ m^h $ for every $ h > 0 $, and $ m $ is the $ \P $-a.s. limit of $ \{m_h\} $ in $ \mathcal{C}([0,T]; \H^{-1}_{w}) $ for some $ w \geq 1 $. 
	\et
	
	\begin{theorem}\label{Thm: pathwise uniqueness}
		The solution $ m $ of \eqref{eq_ito} is pathwise unique and therefore unique in law. 
	\end{theorem}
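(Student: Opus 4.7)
The plan is to establish pathwise uniqueness directly; uniqueness in law will then follow from pathwise uniqueness together with the existence result of Theorem \ref{Theorem: existence} via the Yamada-Watanabe theorem, exactly as in \cite{Brzezniak2013_sLLG}. Suppose $m^{(1)}$ and $m^{(2)}$ are two solutions of \eqref{eq_ito} in the sense of Definition \ref{def: sol}, defined on the same filtered probability space with the same Wiener process $W$ and the same initial datum $m_0$. Set $z=m^{(1)}-m^{(2)}$, so that $z(0)=0$, and the goal is to show $\E|z(t\wedge\tau_R)|^2_{\b L^2}=0$ for each $t\in [0,T]$ and each element of a localising sequence $\tau_R$.

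The first step is to exploit the pointwise constraint $|m^{(i)}|=1$ to rewrite $F$, $S$ and $G$ in a form suitable for differencing. The identity $m^{(i)}\times(m^{(i)}\times\zeta)=\langle m^{(i)},\zeta\rangle m^{(i)}-\zeta$ applied to $\zeta=v\partial_x m^{(i)}$ and to $\zeta=\partial_{xx}m^{(i)}$ turns the apparently quasilinear second-order drift $F(m^{(i)})$ into a semilinear expression whose only top-order term is $\alpha\partial_{xx}m^{(i)}$, accompanied by reaction terms of the form $\alpha|\partial_x m^{(i)}|^2 m^{(i)}$; similarly $G(m^{(i)})=-\partial_x m^{(i)}+\gamma m^{(i)}\times\partial_x m^{(i)}$. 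Using the product-rule identity $a_1 b_1-a_2 b_2=(a_1-a_2)b_1+a_2(b_1-b_2)$ together with its cross-product analogue, each of $F(m^{(1)})-F(m^{(2)})$, $S(m^{(1)})-S(m^{(2)})$ and $G(m^{(1)})-G(m^{(2)})$ splits into a sum of terms linear in either $z$ or $\partial_x z$, with coefficients depending continuously on $m^{(1)},m^{(2)}$ and their first and second spatial derivatives.

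Next I apply It\^o's formula to $|z|^2_{\b L^2}$. The top-order drift gives $2\alpha\langle z,\partial_{xx}z\rangle_{\b L^2}=-2\alpha|\partial_x z|^2_{\b L^2}$, which supplies dissipation. The antisymmetric contribution $\langle z,m^{(2)}\times\partial_{xx}z\rangle_{\b L^2}$ integrates by parts to $\langle\partial_x m^{(2)}\times z,\partial_x z\rangle_{\b L^2}$, controlled by $\varepsilon|\partial_x z|^2_{\b L^2}+C_\varepsilon|\partial_x m^{(2)}|^2_{\b L^\infty}|z|^2_{\b L^2}$; the transport contribution $\langle z,v\partial_x z\rangle_{\b L^2}=-\frac{1}{2}\int v'|z|^2\,dx$ is bounded using $|v|_{\b H^1}$. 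Crucially, the Stratonovich-correction drift $\langle z,S(m^{(1)})-S(m^{(2)})\rangle_{\b L^2}$ combined with the It\^o correction $\sum_j q_j^2|(G(m^{(1)})-G(m^{(2)}))f_j|^2_{\b L^2}$ reproduces at the level of the difference the same structural cancellation already exploited in the a priori energy estimate for a single solution: the residual $|\partial_x z|^2_{\b L^2}$ contribution carries a prefactor of order $O(C_\kappa^2)$ and is absorbed into the $2\alpha|\partial_x z|^2_{\b L^2}$ dissipation under the smallness assumption on $C_\kappa$ already in force in Theorem \ref{Theorem: existence}. What remains is a pointwise-in-time drift bounded by $\Psi(t)|z|^2_{\b L^2}$, where $\Psi$ depends polynomially on $|\partial_x m^{(i)}|_{\b L^\infty}$, $|\partial_{xx}m^{(i)}|_{\b L^2}$, $C_\kappa$ and $C_v$; by the one-dimensional embedding $\b H^1(\R)\hookrightarrow\b L^\infty(\R)$ applied to $\partial_x m^{(i)}$ and the regularity in Theorem \ref{Theorem: existence}, $\Psi\in L^1([0,T])$ $\P$-a.s.

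The proof is then closed by localisation and Gr\"onwall. Introducing stopping times $\tau_R$ that cap $\int_0^{\cdot}\Psi(s)\,ds$ and the $\b H^1$ norms of $m^{(i)}$ at level $R$, the stochastic integral up to $\tau_R$ is a true martingale, so taking expectation kills the noise term and Gr\"onwall's lemma yields $\E|z(t\wedge\tau_R)|^2_{\b L^2}=0$. Theorem \ref{Theorem: existence} guarantees $\tau_R\nearrow T$ $\P$-a.s. as $R\to\infty$, hence $z\equiv 0$. I expect the main obstacle to be the It\^o-Stratonovich bookkeeping at the level of the difference $z$: whereas for a single solution the identity $2\langle m,S(m)\rangle_{\b L^2}+\sum_j q_j^2|G(m)f_j|^2_{\b L^2}=0$ simplifies dramatically thanks to $|m|=1$, the analogous relation for $z=m^{(1)}-m^{(2)}$ does not close exactly and leaves a residual gradient-in-$z$ contribution; verifying that its prefactor is indeed $O(C_\kappa^2)$ rather than an unrelated constant, so that it can be absorbed into the $\alpha$-dissipation, is the technical heart of the argument.
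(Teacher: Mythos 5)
Your proposal follows essentially the same route as the paper: difference the two solutions, use $|m^{(i)}|=1$ to reduce $F,S,G$ to semilinear form, apply It\^o's formula to the squared $\b L^2$-type norm, exploit the cancellation between the Stratonovich-correction drift and the It\^o correction so that the residual $|\partial_x z|^2$ term has prefactor of order $\gamma^2C_\kappa^2$ and is absorbed by the $\alpha$-dissipation under the standing smallness condition, and close with Gr\"onwall and Yamada--Watanabe. The only cosmetic differences are that the paper works in a weighted space $\b L^2_w$ and replaces your stopping-time localisation by an integrating factor $e^{-2\int_0^t\psi}$, and bounds the transport term by Cauchy--Schwarz rather than integration by parts; none of these affects the substance.
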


\section{Discretization}
	From the definition of discrete operators and the discrete $\L_h^p$ norm, we deduce following results.
	\begin{remark}\label{rem2}
	For $ u : \Z_h \to \mathbb{R}^3 $, 
	\begin{enumerate}[(a)]
	\ie
		\item  
		$\partial^{h} (\partial^{h} u) = \frac{1}{h} (\partial^{h}u^+- \partial^{h} u) = \Delta^h u^+$,
		
		\item 
		for any $p\in[1,\infty]$,
		$|u|_{\L_h^p} = |u^+|_{\L_h^p} = |u^-|_{\L_h^p}$,
		which implies 
		$ |\partial^{h}u|_{\L_h^p} = |\partial^{h} u^+|_{\L_h^p} = |\partial^{h} u^-|_{\L_h^p}$,
		and hence, 
		\begin{equation*}
			|\partial^h u|_{\b L_h^p}^2 \leq \frac{4}{h^2} | u|_{\b L_h^p}^2, \quad |\Delta^h u|_{\b L_h^p}^2 \leq \frac{4}{h^2} |\partial^hu|_{\b L_h^p}^2,
		\end{equation*}
	
		\item 
		Lemma \ref{Lemma: discrete interpolation ineq} indicates $|u|_{\b L_h^\infty}\le C |u|_{\b H^1_h}$ for any $u\in\b H_h^1 \,$. 
	\end{enumerate}
	\end{remark}

\subsection{Existence of a unique solution of the semi-discrete scheme}
	
	\begin{lemma}\label{Lemma: product of local Lip}
		For every $ h >0 $, if $ f,g: \E_h \to \E_h $ are locally Lipschitz and satisfy $ f(0) = g(0) = 0 $, then $ f \times g $, $ \langle f, g \rangle $ and $ \partial^h f $ are also locally Lipschitz on $ \E_h $. 
	\end{lemma}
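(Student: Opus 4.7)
My plan is to handle the three assertions in parallel by reducing each to an elementary algebraic identity combined with either bilinearity or composition. For the cross product $f\times g$ and the inner product $\langle f,g\rangle$, I will first establish the \emph{bilinear boundedness} $|a\times b|_{\E_h}+|\langle a,b\rangle|_{\E_h}\le C|a|_{\E_h}|b|_{\E_h}$, and then combine it with the add-and-subtract identity $f(u)g(u)-f(v)g(v)=(f(u)-f(v))g(u)+f(v)(g(u)-g(v))$ together with the hypothesis $f(0)=g(0)=0$. For $\partial^h f$, I will show that the linear map $\partial^h:\E_h\to\E_h$ is bounded (with norm depending on $h$), so that $\partial^h f$ inherits local Lipschitz continuity from $f$ directly.

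For the bilinear bound, the $\L_h^\infty$ part is the trivial pointwise estimate $|ab|_{\L_h^\infty}\le|a|_{\L_h^\infty}|b|_{\L_h^\infty}$. For the $\L_h^2$ norm of the discrete gradient, I will use the discrete Leibniz identity
\[
\partial^h(ab)(x)=(\partial^h a)(x)\,b(x+h)+a(x)\,(\partial^h b)(x),
\]
which, via $|b^+|_{\L_h^\infty}=|b|_{\L_h^\infty}$ from Remark~\ref{rem2}(b), delivers
$|\partial^h(ab)|_{\L_h^2}\le|\partial^h a|_{\L_h^2}|b|_{\L_h^\infty}+|a|_{\L_h^\infty}|\partial^h b|_{\L_h^2}\le 2|a|_{\E_h}|b|_{\E_h}$. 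The same identity applies verbatim with $ab$ replaced by $a\times b$ or $\langle a,b\rangle$. To upgrade bilinear boundedness to local Lipschitz continuity of the induced maps $u\mapsto f(u)\times g(u)$ and $u\mapsto\langle f(u),g(u)\rangle$, I will note that the local Lipschitz hypothesis together with $f(0)=g(0)=0$ gives $|f(u)|_{\E_h},|g(u)|_{\E_h}\le L(R)R$ on $\{|u|_{\E_h}\le R\}$; the add-and-subtract decomposition then yields a Lipschitz constant of order $L_f(R)L_g(R)R$ on that ball.

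For $\partial^h f$, the required linear bound is a direct consequence of Remark~\ref{rem2}(a)--(b): for fixed $h>0$ and every $u\in\E_h$,
\[
|\partial^h u|_{\L_h^\infty}\le\tfrac{2}{h}|u|_{\L_h^\infty},\qquad |\partial^h\partial^h u|_{\L_h^2}=|\Delta^h u^+|_{\L_h^2}\le\tfrac{2}{h}|\partial^h u|_{\L_h^2},
\]
so $|\partial^h u|_{\E_h}\le\tfrac{2}{h}|u|_{\E_h}$. Consequently the local Lipschitz hypothesis on $f$ yields $|\partial^h f(u)-\partial^h f(v)|_{\E_h}\le\tfrac{2}{h}L_f(R)|u-v|_{\E_h}$ on the ball of radius $R$, which is the desired estimate.

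There is no serious obstacle here; the proof is essentially bookkeeping. The only point to be mindful of is that the constants for $\partial^h f$ blow up as $h\to 0$, but this is harmless since the lemma is applied at each fixed discretisation level to feed into the standard locally Lipschitz SDE existence argument for~\eqref{SDE: discrete}.
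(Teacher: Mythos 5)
Your proof is correct, and it is exactly the routine argument the paper has in mind: the authors state after the lemma that "the result is clear and we omit the proof here," so there is no proof in the paper to diverge from. Your three ingredients — the bilinear bound on $\E_h$ via the discrete Leibniz rule, the add-and-subtract decomposition exploiting $f(0)=g(0)=0$ to control $|f(v)|_{\E_h}$ and $|g(u)|_{\E_h}$ on a ball, and the $h$-dependent boundedness of $\partial^h$ from Remark~\ref{rem2} — are precisely the bookkeeping being omitted, and your closing observation that the $\frac{2}{h}$ blow-up is harmless at fixed $h$ is the right sanity check.
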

	The result in Lemma \ref{Lemma: product of local Lip} is clear and we omit the proof here. 
	Then we check that the coefficients in \eqref{SDE: discrete} are locally Lipschitz on $ \E_h $. 
	
	\begin{lemma}\label{Lemma: F,G,Gtilde local Lip}
		For every $ h > 0 $, $ F^h, G^h, S^h: \E_h \to \E_h $ are locally Lipschitz on $ \E_h $. 
	\end{lemma}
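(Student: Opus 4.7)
The plan is to read off $F^h$, $G^h$, and $S^h$ as finite sums of products, formed via the cross product, inner product, and pointwise scalar multiplication, of a small family of elementary maps $\mathbb{E}_h \to \mathbb{E}_h$, each of which is linear, sends $0$ to $0$, and is bounded (and hence globally Lipschitz). Inductive application of Lemma \ref{Lemma: product of local Lip} then yields local Lipschitzness of every summand, and a finite sum of locally Lipschitz maps is locally Lipschitz.

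The elementary maps I would isolate are: (i) the identity $u \mapsto u$; (ii) the discrete gradient $u \mapsto \partial^h u$ together with its shifts $u \mapsto \partial^h u^{\pm}$; (iii) the discrete Laplacian $u \mapsto \Delta^h u$; (iv) pointwise multiplication by the scalar fields $v(t,\cdot)$, $\kappa^2$, $(\kappa^2)^-$, $\kappa\kappa'$; and (v) multiplication by the constants $\alpha$, $\gamma$. That (i) is Lipschitz is trivial; that (ii)--(iii) are bounded on each $\mathbb{L}_h^p$ is the content of Remark \ref{rem2}(b) (with $h$-dependent operator norm $\le 2/h$); (v) is immediate. For (iv) one combines the pointwise bound $|\varphi u|_{\mathbb{L}_h^\infty}\le |\varphi|_{\mathbb{L}^\infty}|u|_{\mathbb{L}_h^\infty}$ with the discrete Leibniz rule $\partial^h(\varphi u)=(\partial^h\varphi)\,u^++\varphi\,\partial^h u$. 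The multipliers $v$, $\kappa^2$, $\kappa\kappa'$ are bounded in $\mathbb{L}^\infty$ by Assumption \ref{hyp1} and Remark \ref{rem1}(a), and their discrete derivatives satisfy the (crude but sufficient) pointwise bound $|\partial^h\varphi|_{\mathbb{L}^\infty}\le 2|\varphi|_{\mathbb{L}^\infty}/h$. Since each map in (i)--(v) is linear, boundedness coincides with global Lipschitz continuity and with sending $0$ to $0$.

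Once these building blocks are in place, every summand appearing in the displayed formulas for $F^h$, $G^h$, $\mathcal{G}^h_\kappa$, $G_1^h$, $G_2^h$, $G_3^h$, and $S^h$ is obtained from (i)--(v) by finitely many applications of $\times$, $\langle\cdot,\cdot\rangle$, and vector-by-scalar multiplication. Applying Lemma \ref{Lemma: product of local Lip} inductively to each such composition yields that the summand is a locally Lipschitz map $\mathbb{E}_h\to\mathbb{E}_h$; summing finitely many such maps preserves local Lipschitzness, which delivers the conclusion.

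The one genuinely technical (though mild) step is (iv): verifying that multiplication by $v$, $\kappa^2$, $(\kappa^2)^-$, and $\kappa\kappa'$ preserves $\mathbb{E}_h$ with a controlled (possibly $h$-dependent) operator norm. The $\mathbb{L}_h^\infty$ estimate is immediate, but the $\mathbb{L}_h^2$ bound on $\partial^h(\varphi u)$ requires the discrete Leibniz rule and the pointwise estimates described above; everything else is bookkeeping of cross-product identities.
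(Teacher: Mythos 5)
Your strategy is essentially the paper's, which is even terser: the paper records the global Lipschitz bounds $|\partial^h u-\partial^h w|_{\E_h}\le \tfrac{2}{h}|u-w|_{\E_h}$ and $|\Delta^h u-\Delta^h w|_{\E_h}\le\tfrac{4}{h^2}|u-w|_{\E_h}$ from Remark \ref{rem2}(b) and then simply invokes Lemma \ref{Lemma: product of local Lip} together with \eqref{Cv bound}. Your extra care about the scalar multipliers is exactly where the paper is silent, but your justification of step (iv) does not close as written: an element $u\in\E_h=\L_h^\infty\cap\mathring{\H}_h^1$ need not lie in $\L_h^2$ (nonzero constants, and indeed the initial data $m_0$ with $|m_0|=1$, belong to $\E_h$ but not to $\L_h^2$), so in the Leibniz expansion $\partial^h(\varphi u)=(\partial^h\varphi)\,u^+ +\varphi\,\partial^h u$ the first term cannot be placed in $\L_h^2$ using only the crude bound $|\partial^h\varphi|_{\L^\infty}\le 2|\varphi|_{\L^\infty}/h$; multiplication by $v$, $\kappa^2$ or $\kappa\kappa'$ is therefore not a bounded operator $\E_h\to\E_h$ by that argument. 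The repair is immediate and requires no new idea: in the displayed formulas for $F^h$, $G^h$ and $S^h$ the multipliers $v$, $\kappa^2$, $(\kappa^2)^-$, $\kappa\kappa'$ only ever multiply expressions carrying a factor $\partial^h u$ or $\Delta^h u$, which do lie in $\L_h^2\cap\L_h^\infty$ with norms controlled by $|u|_{\E_h}$ and $h$, so one estimates $|(\partial^h\varphi)\,w^+|_{\L_h^2}\le|\partial^h\varphi|_{\L^\infty}|w|_{\L_h^2}$ with $w$ the derivative-bearing factor. With that adjustment your decomposition and the inductive use of Lemma \ref{Lemma: product of local Lip} deliver the lemma exactly as in the paper.
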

	\begin{proof} 
		Let $ u,w \in \mathbb{E}_h $. It follows from Remark \ref{rem2}(b) that 
		\begin{align*}
			|\partial^h u - \partial^h w|_{\E_h}^2 
			&=|\partial^h (u-w) |_{\L_h^\infty}^2 + |\partial^{h} \partial^h (u-w)|_{\L_h^2}^2\\
			&\leq \frac{4}{h^2} \left( | u-w|_{\L_h^\infty}^2 + |\partial^h (u-w)|_{\L_h^2}^2 \right)
			=\frac{4}{h^2} |u-w|_{\E_h}^2,
		\end{align*}
		and
		\begin{align*}
			|\Delta^h u - \Delta^h w|_{\E_h}^2 
			&=|\Delta^h (u-w) |_{\L_h^\infty}^2 + |\partial^{h} \Delta^h (u-w)|_{\L_h^2}^2\\
			&\leq \frac{4}{h^2} \left( |\partial^h (u-w)|_{\L_h^\infty}^2 + |\Delta^h (u-w)|_{\L_h^2}^2 \right)\\
			&\leq \frac{16}{h^4} \left( | u-w|_{\L_h^\infty}^2 + |\partial^h (u-w)|_{\L_h^2}^2 \right) 
			=  \frac{16}{h^4} |u - w|_{\E_h}^2.
		\end{align*}
		By Lemma \ref{Lemma: product of local Lip} and \eqref{Cv bound}, $ F^h, G^h$ and $S^h $ are locally Lipschitz.
	\end{proof}
	
	\begin{proof}[\textbf{Proof of Lemma \ref{Lemma: solution of disSDE}}]
		For each $ n \in \mathbb{N} $ and $ r^h =F^h, S^h $ and $ G^h $, define 
		\begin{align*}
			r_n^h(u) 
			=  \begin{cases}
				r^h(u) &\text{if } |u|_{\mathbb{E}_h} \leq n \\
				r^h\la \frac{n u}{|u|_{\mathbb{E}_h}} \ra &\text{if } |u|_{\mathbb{E}_h} > n. 
			\end{cases}
		\end{align*}
		Then $ F_{n}^h $, $ S_n^h $ and $ G_n^h $ are Lipschitz on $ \mathbb{E}_h $. 
		
		Fix $ n \in \mathbb{N} $. 
		Let $ A_n: \mathcal{E}_h \to \mathcal{E}_h $ be given by
		\begin{equation}\label{pf: An defn}
		\begin{aligned}
			A_n(u)(t) 
			&= m_0 + \int_0^t \la F_{n}^h(u(s)) + \frac{1}{2} S_n^h(u(s)) \ra ds + \int_0^t G_n^h (u(s)) \ dW(s) \\
			&= m_0 + I_n(t) + J_n(t). 
		\end{aligned}
		\end{equation}
		We first verify that $ A_n(u) \in \mathcal{E}_h $ for $ u \in \mathcal{E}_h $. 
		Note that $ F_n^h $ and $ S_n^h $ are bounded on $ \mathbb{E}_h $, with 
		\begin{align*}
			\E \left[ |I_n(t)|_{\E_h}^2 \right]
			&\leq T \E \left[ \int_0^t \left| F_{n}^h(u(s)) + \frac{1}{2} S_n^h(u(s)) \right|_{\E_h}^2 ds \right] \\
			&\leq C_1(h,n) T \E \left[ \int_0^t |u(s)|_{\E_h}^2 ds \right] \\
			&\leq C_1(h,n) T^2 |u|_{\mathcal{E}_h}^2,
		\end{align*}
		for some constant $ C_1 $ that depends on $ h $ and $ n $. 
		For $ J_n(t) $, we have
		\begin{align*}
			\sum_j q_j^2 |f_j G^h(u(s))|_{\L_h^2}^2 
			&= \sum_j q_j^2 \left| f_j u(s) \times \la u(s) \times \partial^h u(s) \ra + \gamma f_j u(s) \times \partial^h u(s) \right|_{\L_h^2}^2 \\
			&\leq 2 |\kappa^2|_{\L_h^\infty} \la |u(s)|_{\L_h^\infty}^4 + \gamma^2 |u(s)|_{\L_h^\infty}^2 \ra |\partial^h u(s)|^2_{\L_h^2}.
		\end{align*}
		where the last inequality holds by Tonelli's theorem. 
		This together with Remark \ref{rem2}(b) implies
		\begin{align*}
			\sum_j q_j^2 |\partial^{h} (f_j G^h(u(s))) |_{\L_h^2}^2 
			&\leq \frac{4}{h^2} \sum_j q_j^2 |f_j G^h(u(s))|_{\L_h^2}^2 \\
			&\leq \frac{8}{h^2} |\kappa^2|_{\L_h^\infty} \la |u(s)|_{\L_h^\infty}^4 + \gamma^2 |u(s)|_{\L_h^\infty}^2 \ra \ |\partial^{h} u(s)|^2_{\L_h^2}.
		\end{align*}
		Then by the definition of $ G_n^h $, the assumption \eqref{Ckappa bound} and Fubini's theorem, 
		\begin{align*}
			\E \left[ \int_0^t \sum_j q_j^2 \left| f_j G_n^h (u(s)) \right|^2_{\H_h^1}  \ ds \right] 
			&\leq 2 C_\kappa^2 (n^4 + \gamma^2 n^2) \la 1+ \frac{4}{h^2} \ra T \sup_{s \in [0,t]} \mathbb{E}\left[ |u(s)|_{\E_h}^2 \right] \\
			&= C_2(h,n,\kappa) T |u|_{\mathcal{E}_h}^2,
		\end{align*}
		for $ C_2(h,n,\kappa) = 2C_\kappa^2 (n^4 + \gamma^2 n^2) (1+\frac{4}{h^2}) T$.
		Thus, $ J_n $ is a $ \H_h^1 $-valued continuous martingale. 
		By Lemma \ref{Lemma: discrete interpolation ineq} (or Remark \ref{rem2}(c)), there exists a constant $ C>0 $ such that 
		\begin{equation}\label{pf: J1}
		\begin{aligned}
			| J_n(t) |_{\E_h}^2 
			&= | J_n(t) |_{\L_h^\infty}^2 + | \partial^{h} J_n(t) |_{\L_h^2}^2 
			\leq (C^2+1) |J_n(t) |_{\H_h^1}^2.
		\end{aligned}
		\end{equation}
		From \cite[Corollary 4.29]{Red_book}, 
		\begin{equation}\label{pf: J2}
		\begin{aligned}
			\E \left[ \sup_{t \in [0,T]} \left| J_n(t) \right|_{\H_h^1}^2 \right]
			&= \E \left[ \sup_{t \in [0,T]} \left| \int_0^t G_n^h(u(s)) \ dW(s) \right|_{\H_h^1}^2 \right] \\
			&\leq \E \left[ \int_0^T \sum_j q_j^2 \left| f_j G_n^h(u(s)) \right|_{\H_h^1}^2 \ ds  \right] \\
			&\leq C_2(h,n,\kappa) T |u|_{\mathcal{E}_h}^2.
		\end{aligned}
		\end{equation}
		It follows from~\eqref{pf: J1} and~\eqref{pf: J2} that 
		\begin{equation}\label{pf: Gnh isometry Eh vs Hh1}
			\mathbb{E} \left[ \sup_{t \in [0,T]} \left| J_n(t) \right|_{\mathbb{E}_h}^2 \right]
			\leq \la C^2+ 1\ra C_2(h,n,\kappa) T |u|_{\mathcal{E}_h}^2.
		\end{equation}
		Thus, $ A_n(u) \in \mathcal{E}_h $ for $ u \in \mathcal{E}_h $. 	
	
		For $ u, \nu \in \mathcal{E}_h $, there exists a constant $ C >0 $ such that
		\begin{align*}
			|A_n(\nu) - A_n(u)|_{\mathcal{E}_h}^2
			&\leq C \sup_{t \in [0,T]} \mathbb{E} \left[ \left| \int_0^t F_{n}^h(\nu(s)) - F_{n}^h(u(s)) \ ds \right|_{\mathbb{E}_h}^2 \right] \\
			&\quad + C \sup_{t \in [0,T]} \mathbb{E} \left[ \left| \int_0^t \frac{1}{2} \la S_n^h(\nu(s)) - S_n^h(u(s)) \ra \ ds \right|_{\mathbb{E}_h}^2 \right] \\
			&\quad + C \sup_{t \in [0,T]} \mathbb{E} \left[ \left| \int_0^t \la G_n^h(\nu(s)) - G_n^h(u(s)) \ra dW(s) \right|_{\mathbb{E}_h}^2 \right].
		\end{align*}
		Similarly, $ M_n(t) := \int_0^t  \la G_n^h(\nu(s)) - G_n^h(u(s)) \ra dW(s) $ is a $ \H_h^1 $-valued continuous martingale, and replacing $ J_n $ by $ M_n $ in \eqref{pf: J1} and \eqref{pf: J2}, we have
		\begin{equation}\label{pf: Gnh v-u isometry Eh vs Hh1}
			\mathbb{E} \left[ \sup_{t \in [0,T]} \left| M_n(t) \right|_{\mathbb{E}_h}^2 \right]
			\leq \mathbb{E} \left[ \int_0^T \sum_j q_j^2 \left| f_j \la G_n^h(\nu(s)) - G_n^h(u(s))\ra\right|_{\H_h^1}^2 \ ds  \right].
		\end{equation}
		By construction, if $ |\nu(s)|_{\E_h}, |u(s)|_{\E_h} \leq n $, then
		\begin{align*}
			|G_n^h(\nu(s)) - G_n^h(u(s)) |_{\L_h^2} 
			& \leq |\nu(s) - u(s)|_{\L_h^\infty}  \la |\nu(s) |_{\L_h^\infty} + |u(s)|_{\L_h^\infty} +|\gamma| \ra |\partial^h \nu(s)|_{\L_h^2}  \\
			&\quad + \la |u(s)|_{\L_h^\infty}^2 + |\gamma| |u(s)|_{\L_h^\infty} \ra  | \partial^h \nu(s) - \partial^h u(s) |_{\L_h^2} \\
			&\leq (3 n^2 + 2|\gamma|n) \ |\nu(s) - u(s) |_{\E_h}.
		\end{align*}
		If $ |\nu(s)|_{\E_h}, |u(s)|_{\E_h} > n $, then let $ n_s^\nu = n |\nu(s)|_{\E_h}^{-1} $ and $ n_s^u = n |u(s)|_{\E_h}^{-1} $, we have 
		\begin{align*}
			|n_s^\nu - n_s^u| \ |u|_{\E_h} \leq |\nu(s)- u(s)|_{\E_h},
		\end{align*}
		and
		\begin{align*}
			&|G_n^h(\nu(s)) - G_n^h(u(s)) |_{\L_h^2} \\
			&= |G^h(n_s^\nu \nu(s)) - G^h(n_s^u u(s)) |_{\L_h^2} \\
			&\leq \la n_s^\nu  |\nu(s) - u(s)|_{\L_h^\infty} + | n_s^\nu - n_s^u| |u(s)|_{\L_h^\infty} \ra  \la n_s^\nu |\nu(s) |_{\L_h^\infty} + n_s^u |u(s)|_{\L_h^\infty} + |\gamma| \ra n_s^\nu |\partial^h \nu(s)|_{\L_h^2}  \\
			&\quad + \la n_s^\nu | \partial^h \nu(s) - \partial^h u(s) |_{\L_h^2} + | n_s^\nu - n_s^u| |\partial^h u(s)|_{\L_h^2} \ra \la (n_s^u)^2 |u(s)|_{\L_h^\infty}^2 + |\gamma| n_s^u |u(s)|_{\L_h^\infty} \ra \\
			&\leq 2 (3n^2 + 2|\gamma|n) \ |\nu(s) - u(s) |_{\E_h}.
		\end{align*}
		If $ |\nu(s)|_{\E_h} > n $ and  $ |u(s)|_{\E_h} \leq n $, then 
		\begin{align*}
			|n_s^\nu -1 | 
			\leq n^{-1} |n-|\nu(s)|_{\E_h}| 
			\leq n^{-1} | |u(s)|_{\E_h} - |\nu(s)|_{\E_h} | 
			\leq n^{-1} |u(s)-\nu(s)|_{\E_h},
		\end{align*}
		implying that $ |n_s^\nu - 1| \ |u|_{\E_h} \leq |u(s)- \nu(s)|_{\E_h} $ and
		\begin{align*}
			|G_n^h(\nu(s)) - G_n^h(u(s)) |_{\L_h^2} 
			&\leq 2 (3n^2 + 2|\gamma|n) \ |\nu(s) - u(s) |_{\E_h}.
		\end{align*}
		Similar result follow for $ \partial^h (G_n^h(\nu)-G_n^h(u)) $ using Remark \ref{rem2}(b). 
		Then, by \eqref{pf: Gnh v-u isometry Eh vs Hh1}, Lemma \ref{Lemma: F,G,Gtilde local Lip} and H{\"o}lder's inequality, there exist constants $ L_1(h,n,T) $ and $ L_2(h,n,T) $ such that
		\begin{align*}
			|A_n(\nu) - A_n(u)|_{\mathcal{E}_h}^2
			&\leq L_1(h,n,T) \sup_{t \in [0,T]} \mathbb{E} \left[ \int_0^t  |\nu(s) - u(s)|_{\mathbb{E}_h}^2 \ ds\right] \\
			&\leq L_2(h,n,T) |\nu-u|_{\mathcal{E}_h}^2. 
		\end{align*}
		Consider the discrete equation 
		\begin{equation}\label{SDE: discrete Fn Gn}
			dm^h_n(t) = \la F_{n}^h(m^h_n(t)) + \frac{1}{2} S_n^h(m^h_n(t)) \ra dt + G_n^h(m^h_n(t)) \ dW(t), 
		\end{equation}
		with $ m^h_n(0) = m_0 \in \mathbb{E}_h $ on intervals $ [(k-1) \tilde{T},k\tilde{T}] $ for $ k \geq 1 $, where $ \tilde{T} $ satisfies $ L_2(h,n,\tilde{T}) < 1 $. 
		By the Banach fixed point theorem, there exists a unique solution $ m_n^h \in \mathcal{E}_h $ of \eqref{SDE: discrete Fn Gn} on $ [0,T] $. 
		
		Define the stopping times
		\begin{align*}
			\tau_n := \inf \{ t \geq 0 : |m_n^h(t)|_{\E_h} > n \}, \quad 
			\tau_n' := \inf \{ t\geq 0 : |m_{n+1}^h(t)|_{\E_h} > n \}.
		\end{align*}
		Let $ \tau = \tau_n \wedge \tau_n' $.
		Then $ A_n(m_{n+1}^h) = A_{n+1}(m_{n+1}^h) $ on $ [0,\tau) $, and by \eqref{pf: Gnh v-u isometry Eh vs Hh1}, 
		\begin{align*}
			\mathbb{E} \left[ \sup_{t \in [0,T]} \left| m_{n+1}^h(t \wedge \tau) - m_n^h(t \wedge \tau) \right|_{\E_h}^2 \right] 
			&= \mathbb{E} \left[ \sup_{t \in [0,T]} \left| A_n(m_{n+1}^h)(t \wedge \tau) - A_n(m_n^h)(t \wedge \tau) \right|_{\E_h}^2  \right] \\
			&\leq L_1(h,n,T) \mathbb{E} \left[ \int_0^T \left| m_{n+1}^h(s) - m_n^h(s) \right|_{\E_h}^2 \ ds \right],
		\end{align*}
		which implies
		\begin{align*}
			\mathbb{E} \left[ \sup_{t \in [0,T]} \left| m_{n+1}^h(t \wedge \tau) - m_n^h(t \wedge \tau) \right|_{\E_h}^2 \right]  = 0,
		\end{align*}
		by Gr{\"o}nwall's lemma. 
		Thus, $ m_{n+1}^h(\cdot \wedge \tau) = m_n^h(\cdot \wedge \tau) $ and $ \tau = \tau_n $, $ \mathbb{P} $-a.s. and the discrete equation \eqref{SDE: discrete} admits a local solution $ m^h(t) = m_n^h(t) $ for $ t \in [0,\tau_n] $. 
		
		Recall that $ m_0 \in \mathbb{E}_h $ and $ |m_0(x)| =1 $ for all $ x \in \Z_h $. 
		Applying It{\^o}'s lemma to $ \frac{1}{2}|m^h(t,x)|^2 $, 
		\begin{align*}
			\frac{1}{2} d|m^h(t,x)|^2
			&= \lb F^h(m^h(t))(x) + \frac{1}{2} S^h(m^h(t))(x), m^h(t,x) \rb dt + \frac{1}{2} \kappa^2(x) |G^h(m^h(t))(x) |^2 dt \\
			&\quad + \lb G^h(m^h(t))(x), m^h(t,x) \rb dW(t).
		\end{align*}
		By $ \langle a, a \times b \rangle =0 $, for any $ t \in [0,\tau_n] $ and $ x \in \Z_h $, 
		\begin{align*}
			&\lb F^h(m^h), m^h \rb(t,x) = 0, \\
			&\lb S^h(m^h), m^h \rb(t,x) 
			= \lb \kappa^2 G^h_2(m^h), m^h \rb (t,x)
			= - \kappa^2(x) |G^h(m^h)|^2(t,x), \\
			&\lb G^h(m^h), m^h \rb(t,x) = 0.
		\end{align*}
		Therefore, $ |m^h(t,x)| = |m_n^h(t,x)| =  |m_0(x)| = 1 $ for any $ t \in [0,\tau_n] $ and $ x \in \Z_h $. 
		
		For any fixed $ h $ and $ n $, the unique solution $ m_n^h $ of \eqref{SDE: discrete Fn Gn} satisfies
		\begin{align*}
			\mathbb{E} \left[ |m^h_n(t \wedge \tau_n)|^2_{\mathbb{E}_h} \right]
			&= \mathbb{E} \left[ |m^h_n(t \wedge \tau_n)|_{\L_h^\infty}^2 + |\partial^{h} m^h_n(t \wedge \tau_n) |_{\L_h^2}^2 \right] \\
			&= 1 + \mathbb{E} \left[ |\partial^{h} m^h_n(t \wedge \tau_n) |_{\L_h^2}^2 \right].
		\end{align*}
		We apply It{\^o}'s lemma to $ \frac{1}{2}|\partial^{h} m_n^h(t \wedge \tau_n)|^2_{\L_h^2} $,
		\begin{align*}
			&\frac{1}{2}|\partial^{h} m_n^h(t \wedge \tau_n)|^2_{\L_h^2} 
			- \frac{1}{2}|\partial^{h} m_0|^2_{\L_h^2} \\
			&= \int_0^{t \wedge \tau_n} \lb -\Delta^h m_n^h(s), F^h(m_n^h(s)) + \frac{1}{2} S^h(m_n^h(s)) \rb_{\L_h^2} ds \\
			&\quad + \int_0^{t \wedge \tau_n} \frac{1}{2} \sum_j q_j^2 \left| \partial^{h}\left( f_j G^h(m_n^h(s))\right) \right|_{\L_h^2}^2 ds 
			+ \int_0^{t \wedge \tau_n} \lb -\Delta^h m_n^h(s), G^h(m_n^h(s)) \ dW(s) \rb_{\L_h^2} .
		\end{align*}
		Since $ |m_n^h(t,x)|=1 $ for $ (t,x) \in [0,\tau_n] \times \Z_h $, and
		\begin{align*}
			|\Delta^h m_n^h(t)|_{\L_h^2}^2 + |\partial^{h} m_n^h(t)|_{\L_h^2}^4 \leq \frac{8}{h^2} |\partial^{h} m_n^h(t)|_{\L_h^2}^2, 
		\end{align*}
		there exist constants $ \beta_1 $ and $ \beta_2 $ that depend on $ h $ (not $ n $) such that
		\begin{align*}
			\lb -\Delta^h m_n^h(s), F^h(m_n^h(s)) + \frac{1}{2} S^h(m_n^h(s)) \rb_{\L_h^2} 
			+ \frac{1}{2} \sum_j q_j^2 \left|\partial^{h}\la f_j G^h(m_n^h(s)) \ra\right|_{\L_h^2}^2
			\leq \beta_1(h)  |\partial^{h} m_n^h(s)|_{\L_h^2}^2,
		\end{align*}
		and
		\begin{align*}
			\lb \Delta^h m_n^h(s), G^h(m_n^h(s)) \rb_{\L_h^2}^2
			&\leq \frac{1}{4}\la |m_n^h(s) \times \Delta^h m_n^h(s)|^2_{\L_h^2} + |\partial^h m_n^h(s)|^2_{\L_h^2} \ra^2 \\
			&\leq \beta_2(h) |\partial^{h} m_n^h(s)|_{\L_h^2}^2. 
		\end{align*}
		Then, by the boundedness of $ \kappa^2 $, the stochastic integral $ \int_0^{t \wedge \tau_n} \langle \Delta^h m_n^h(s), G^h(m_n^h(s))\ dW(s) \rangle_{\L_h^2} $ is a square integrable continuous martingale for $ m_n^h \in \mathcal{E}_h $, for every $ h>0 $. 
		Now, we have
		\begin{align*}
			|\partial^{h} m_n^h(t \wedge \tau_n)|^2_{\L_h^2} 
			&\leq  |\partial^{h} m_0|^2_{\L_h^2} + 2\int_0^{t \wedge \tau_n} \beta_1(h) |\partial^{h} m_n^h(s)|^2_{\L_h^2} \ ds \\ 
			&\quad - 2\int_0^{t \wedge \tau_n} \lb \Delta^h m_n^h(s), G^h(m^h_n)(s) \ dW(s) \rb_{\L_h^2} ,
		\end{align*}
		where the stochastic integral part vanishes after taking expectation. 
		We obtain
		\begin{align*}
			\mathbb{E} \left[ |m^h_n(t \wedge \tau_n)|^2_{\mathbb{E}_h} \right]
			&\leq \mathbb{E} \left[ |m_0|^2_{\mathbb{E}_h} + 2\beta_1(h) \int_0^{t } |m^h_n(s\wedge \tau_n)|^2_{\mathbb{E}_h} \ ds \right].
		\end{align*}		
		By Gr{\"o}nwall's lemma,
		\begin{equation}\label{pf: E[mnh] Gronwall} 
			\mathbb{E} \left[ |m^h_n(t \wedge \tau_n)|^2_{\mathbb{E}_h} \right] \leq \mathbb{E} \left[ |m_0|^2_{\mathbb{E}_h} \right] \exp\la \int_0^{t} 2\beta_1(h) ds \ra \leq K(h,t).
		\end{equation}
		By the definition of $ \tau_n $, the left-hand-side of \eqref{pf: E[mnh] Gronwall} is greater than $ n^2 \mathbb{P}(\tau_n \in [0,t]) $, thus 
		\begin{align*}
			\lim_{n \to \infty} \mathbb{P}(\tau_n \in [0,t]) \leq \lim_{n \to \infty} K(h,t) n^{-2} = 0.
		\end{align*}
		In other words, $ \tau_n \to \infty $, $ \mathbb{P} $-a.s, as $ n \to \infty $. 
		Thus, the process 
		$ m^h(t) = \lim_{n \to \infty} m_n^h(t \wedge \tau_n) $ is the unique solution of the semi-discrete scheme \eqref{SDE: discrete}. 
	\end{proof}

\subsection{Uniform estimates for the solution $ m^h $ of the discrete SDE}\label{Section: uniform estimates}
	For every $ h>0 $, let 
	\begin{align*}
		M^h(t) := \int_0^t \langle \Delta^h m^h(s), G^h(m^h(s))\ dW(s) \rangle_{\L_h^2}.
	\end{align*}
	In the following lemma, we deduce an upper bound of the stochastic integral $ M^h(t) $ which is used in the proof of Lemma~\ref{Lemma: D+, mh x D+ est} to obtain uniform estimates for $ m^h $.
	\begin{lemma}\label{Lemma: Gh est}
		For any $ p \in (0,\infty) $, there exists a constant $ b_p $ independent of $ h $ such that 
		\begin{align*}
			\mathbb{E} \left[ \sup_{t \in [0,T]} \left|M^h(t) \right|^p \right] 
			&\leq \frac{1}{2} b_p (1+|\gamma|)^p C_{\kappa}^p \ \E \left[ \sup_{t \in [0,T]} |\partial^{h} m^h(t)|_{\L_h^2}^{2p} + \la \int_0^T |m^h(t) \times \Delta^h m^h(t)|_{\L_h^2}^2 \ dt \ra^p \right].
		\end{align*}
	\end{lemma}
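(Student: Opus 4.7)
The plan is to apply the Burkholder-Davis-Gundy (BDG) inequality to the scalar martingale $M^h$ after exploiting the geometric cancellation provided by the sphere constraint $|m^h|=1$ and the structure of $G^h$.

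First I would note that, with $W(t) = \sum_j q_j f_j W_j(t)$, the process $M^h$ is a real-valued continuous martingale with quadratic variation
\[
[M^h]_t = \int_0^t \sum_{j} q_j^2 \bigl|\lb \Delta^h m^h(s), f_j G^h(m^h(s))\rb_{\L_h^2}\bigr|^2 \, ds.
\]
BDG yields $\E\bigl[\sup_{t \in [0,T]} |M^h(t)|^p\bigr] \le c_p \E\bigl[[M^h]_T^{p/2}\bigr]$ for a universal constant $c_p$, so the task reduces to a pathwise estimate of the integrand.

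The key step is an algebraic rewriting that uses $G^h(u) = u \times (u \times \partial^h u) + \gamma\, u \times \partial^h u$ together with the pointwise identity $\lb a, u \times b\rb = -\lb u \times a, b\rb$. Since $f_j$ is scalar, it commutes through the inner product, and applying the identity to each of the two terms in $G^h$ gives
\[
\lb \Delta^h m^h, f_j G^h(m^h)\rb_{\L_h^2} = -\lb m^h \times \Delta^h m^h,\, f_j\bigl(m^h \times \partial^h m^h + \gamma \partial^h m^h\bigr)\rb_{\L_h^2}.
\]
This trades $\Delta^h m^h$ for $m^h \times \Delta^h m^h$, which is precisely the quantity appearing on the right-hand side of the claim.

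Cauchy-Schwarz in $\L_h^2$ then gives
\[
\bigl|\lb \Delta^h m^h, f_j G^h(m^h)\rb_{\L_h^2}\bigr|^2 \le |m^h \times \Delta^h m^h|_{\L_h^2}^2 \cdot \bigl|f_j\bigl(m^h \times \partial^h m^h + \gamma \partial^h m^h\bigr)\bigr|_{\L_h^2}^2.
\]
Summing in $j$ and interchanging the sum with the spatial integration by Tonelli, the pointwise identity $\sum_j q_j^2 f_j^2(x) = \kappa^2(x)$, Remark \ref{rem1}(a), and the sphere constraint $|m^h|=1$ together produce
\[
[M^h]_T \le C_\kappa^2 (1+|\gamma|)^2 \sup_{t \in [0,T]} |\partial^h m^h(t)|_{\L_h^2}^2 \int_0^T |m^h(t) \times \Delta^h m^h(t)|_{\L_h^2}^2 \, dt.
\]

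Raising to the power $p/2$, taking expectation, and applying Young's inequality $ab \le \tfrac{1}{2}(a^2+b^2)$ to the product of $\sup_t |\partial^h m^h|_{\L_h^2}^p$ and $\bigl(\int_0^T |m^h \times \Delta^h m^h|_{\L_h^2}^2\,dt\bigr)^{p/2}$ then delivers the claimed inequality with $b_p$ equal to the BDG constant $c_p$, which is independent of $h$. No step is a serious obstacle; the delicate point is the identity that converts $\Delta^h m^h$ into $m^h \times \Delta^h m^h$ inside the $j$-sum, since a naive Cauchy-Schwarz bound of $\sum_j q_j^2 |\lb \Delta^h m^h, f_j G^h(m^h)\rb_{\L_h^2}|^2$ by $C_\kappa^2 (1+|\gamma|)^2 |\Delta^h m^h|_{\L_h^2}^2 |\partial^h m^h|_{\L_h^2}^2$ would not be uniform in $h$, whereas $|m^h \times \Delta^h m^h|_{\L_h^2}$ is the quantity that can be controlled uniformly through the forthcoming energy estimates.
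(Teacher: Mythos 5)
Your proposal is correct and follows essentially the same route as the paper: the same anti-symmetry identity converting $\lb\Delta^h m^h, f_jG^h(m^h)\rb_{\L_h^2}$ into an inner product against $m^h\times\Delta^h m^h$, then Cauchy--Schwarz, the bound $\sum_j q_j^2 f_j^2\le C_\kappa^2$ with $|m^h|=1$, BDG, and Young's inequality. Your closing remark about why the naive bound via $|\Delta^h m^h|_{\L_h^2}$ would fail to be uniform in $h$ is exactly the point of the lemma.
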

	\begin{proof}
		We observe that for every $ j \geq 1 $, 
		\begin{align*}
			&\langle \Delta^h m^h(t), q_j f_j G^h(m^h(t)) \rangle_{\L_h^2}  \\
			&= h \sum_x q_j f_j \langle m^h \times \Delta^h m^h, m^h \times \partial^h m^h + \gamma \partial^h m^h \rangle (t,x)\\
			&\leq (1+|\gamma|) h \sum_x \left|q_j f_j  \partial^h m^h \right| \left| m^h \times \Delta^h m^h \right| (t,x) \\
			&\leq (1+|\gamma|) \la h \sum_x q_j^2 f_j^2 |\partial^h m^h|^2(t,x) \ra^{\frac{1}{2}} \la h \sum_x |m^h \times \Delta^h m^h|^2(t,x) \ra^\frac{1}{2},
		\end{align*}
		which implies
		\begin{align*}
			\sum_j \langle \Delta^h m^h(t), q_j f_j G^h(m^h(t)) \rangle_{\L_h^2}^2 
			&\leq (1+|\gamma|)^2 \ |m^h(t) \times \Delta^h m^h(t)|_{\L_h^2}^2 \ \la h \sum_x \sum_j q_j^2 f_j^2 |\partial^h m^h|^2(t,x) \ra \\
			&\leq (1+|\gamma|)^2 \ C_\kappa^2 \ |m^h(t) \times \Delta^h m^h(t)|_{\L_h^2}^2 \ |\partial^h m^h(t)|_{\L_h^2}^2.
		\end{align*}
		Then as in the proof of Lemma \ref{Lemma: solution of disSDE}, for every fixed $ h $,
		\begin{align*}
			\sum_j \langle \Delta^h m^h(t), q_j f_j G^h(m^h(t)) \rangle_{\L_h^2}^2 
			< \infty,\quad \mathbb{P} \text{-a.s.}
		\end{align*}
		implying that $ M^h(t) $ is a continuous martingale. 	
		By the Burkholder-Davis-Gundy inequality, for $ p \in(0,\infty) $, there exists a constant $ b_p $ such that
		\begin{align*}
			\E \left[ \sup_{t \in [0,T]} | M^h(t)  |^{p} \right] 
			&\leq b_{p} \E\left[ \la \int_0^T \sum_j \lb \Delta^h m^h(t), q_j f_j G^h(m^h(t)) \rb_{\L_h^2}^2 \ dt \ra^{\frac{p}{2}} \right] \\
			&\leq b_{p} (1+|\gamma|)^p C_{\kappa}^{p} \ \E \left[ \la \int_0^T \ |\partial^{h} m^h(t)|_{\L_h^2}^2 \ |m^h(t) \times \Delta^h m^h(t)|_{\L_h^2}^2 \ dt \ra^{\frac{p}{2}} \right]. 
		\end{align*}
		Taking the supremum over $ t $ for $ |\partial^{h} m^h(t)|_{\L_h^2}^2 $, 
		\begin{align*}
			&\E \left[ \sup_{t \in [0,T]} | M^h(t)  |^{p} \right] \\\
			&\leq b_p (1+|\gamma|)^p C_{\kappa}^{p} \ \E \left[  \ \sup_{t \in [0,T]} |\partial^{h} m^h(t)|_{\L_h^2}^p \ \la \int_0^T |m^h(t) \times \Delta^h m^h(t)|_{\L_h^2}^2 \ dt \ra^{\frac{p}{2}}  \right] \\				
			&\leq \frac{1}{2} b_p (1+|\gamma|)^p C_{\kappa}^{p} \ \E \left[ \sup_{t \in [0,T]} |\partial^{h} m^h(t)|_{\L_h^2}^{2p} + \la \int_0^T |m^h(t) \times \Delta^h m^h(t)|_{\L_h^2}^2 \ dt \ra^p \right].
		\end{align*}
	\end{proof}
	
	\begin{lemma}\label{Lemma: D+, mh x D+ est}
		For any $ p \in [1,\infty) $, assume that 
		$\{(q_j,f_j)\}_{j\geq 1}$
		satisfies
		\begin{equation}\label{defn: N1p, N2p>0}
		\begin{aligned}
			N_{1,p} &:= 1 - 4^{p-1} b_p (1+|\gamma|)^p C_{\kappa}^{p} > 0,\\
			N_{2,p} &:= 2^p \la \alpha - (1+2\gamma^2) C_\kappa^2 - \delta \ra^p - 4^{p-1} b_p (1+|\gamma|)^p C_{\kappa}^p > 0,
		\end{aligned}
	\end{equation}
		for some small $ \delta > 0 $, where $ b_p $ is the constant in Lemma \ref{Lemma: Gh est}. 
		Let $ |m_0|_{\E_h} \leq K_0 $. 
		Then, there exist constants $ K_{1,p} $ and $ K_{2,p} $ that are independent of $ h $, such that
		\begin{equation}
			\E\left[ \sup_{t \in [0,T]} |\partial^{h} m^h(t)|_{\L_h^2}^{2p} \right] \leq K_{1,p}, \label{estimate: E|D+mh|2p} 
		\end{equation}
		\begin{equation}
			\E \left[ \la \int_0^T |m^h(s) \times \Delta^h m^h(s) |_{\L_h^2}^2 \ ds \ra^p \right] \leq K_{2,p}, \label{estimate: E(int|mh x D2mh|)^p}
		\end{equation}
		for all $ h> 0 $. 
	\end{lemma}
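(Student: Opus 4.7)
The plan is to apply It\^o's lemma to $\tfrac{1}{2}|\partial^h m^h(t)|^2_{\L_h^2}$, exactly as was done locally up to time $\tau_n$ in the proof of Lemma~\ref{Lemma: solution of disSDE}, but now globally in time (thanks to Lemma~\ref{Lemma: solution of disSDE}) and with constants independent of $h$. This produces the identity
\begin{align*}
\tfrac{1}{2}|\partial^h m^h(t)|_{\L_h^2}^2 - \tfrac{1}{2}|\partial^h m_0|_{\L_h^2}^2 = \int_0^t \mathcal{D}^h(s)\,ds - M^h(t),
\end{align*}
where $\mathcal{D}^h(s) = \lb -\Delta^h m^h, F^h(m^h) + \tfrac{1}{2}S^h(m^h)\rb_{\L_h^2}(s) + \tfrac{1}{2}\sum_j q_j^2|\partial^h(f_j G^h(m^h(s)))|_{\L_h^2}^2$. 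I would then raise to the $p$-th power, take the supremum in $t$, and apply Lemma~\ref{Lemma: Gh est} to close via Gronwall.

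The central task is the $h$-uniform analysis of $\mathcal{D}^h$. Using $|m^h(t,x)|=1$ together with the identity $\langle a,a\times b\rangle=0$ and discrete integration by parts on $\Z_h$, the damping term $-\alpha m\times(m\times\Delta^h m)$ in $F^h$ pairs with $-\Delta^h m^h$ to give precisely $-\alpha|m^h\times\Delta^h m^h|_{\L_h^2}^2$; the $v$-terms in $F^h$ are bounded, via \eqref{Cv bound} and Young's inequality with a small parameter $\delta$, by $\delta|m^h\times\Delta^h m^h|_{\L_h^2}^2 + C_\delta(1+|\partial^h m^h|_{\L_h^2}^2)$. The modified Stratonovich correction $S^h$ introduced in \eqref{discrete coef} is tailored so that $\langle -\Delta^h m^h,\tfrac{1}{2}S^h(m^h)\rangle_{\L_h^2}$ together with the It\^o--Stratonovich correction $\tfrac12\sum_j q_j^2|\partial^h(f_jG^h(m^h))|_{\L_h^2}^2$ yields at most $(1+2\gamma^2)C_\kappa^2|m^h\times\Delta^h m^h|_{\L_h^2}^2$ plus a remainder bounded by $C(1+|\partial^h m^h|_{\L_h^2}^2)$. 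The asymmetric averaging of $\kappa^2$ and $(\kappa^2)^-$ in $\mathcal{G}^h_\kappa$, the factor $2$ in $G^h_3$, and the shift in $\langle m^h,\partial^h m^{h-}\rangle$ are designed to absorb exactly the shift mismatch produced by $\partial^h$ when $|m^h|=1$ only holds pointwise on $\Z_h$. Collecting terms gives the pathwise bound
\begin{align*}
|\partial^h m^h(t)|_{\L_h^2}^2 + 2\bigl(\alpha-(1+2\gamma^2)C_\kappa^2-\delta\bigr)\int_0^t |m^h\times\Delta^h m^h|_{\L_h^2}^2\,ds \leq |\partial^h m_0|_{\L_h^2}^2 + C\int_0^t\bigl(1+|\partial^h m^h|_{\L_h^2}^2\bigr)\,ds - 2M^h(t),
\end{align*}
with $C$ depending only on $\alpha,\gamma,C_v,C_\kappa,\delta$.

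To reach \eqref{estimate: E|D+mh|2p}--\eqref{estimate: E(int|mh x D2mh|)^p}, I would raise the previous display to the $p$-th power, apply $(a_1+\cdots+a_4)^p \leq 4^{p-1}\sum a_i^p$, take $\E\sup_{t\leq T}$, and then invoke Lemma~\ref{Lemma: Gh est} on the martingale piece. The martingale contribution is then controlled by $\tfrac{1}{2}b_p(1+|\gamma|)^pC_\kappa^p$ times the sum of the two quantities we wish to estimate. The positivity assumption \eqref{defn: N1p, N2p>0} is precisely what is needed to absorb these two pieces into the left-hand side, yielding
\begin{align*}
N_{1,p}\,\E\bigl[\sup_{t\leq T}|\partial^h m^h|_{\L_h^2}^{2p}\bigr] + N_{2,p}\,\E\Bigl[\Bigl(\int_0^T|m^h\times\Delta^h m^h|_{\L_h^2}^2\,ds\Bigr)^p\Bigr] \leq C_p\bigl(K_0^{2p}+1\bigr) + C_p\int_0^T \E\bigl[|\partial^h m^h(s)|_{\L_h^2}^{2p}\bigr]\,ds.
\end{align*}
Gronwall's lemma gives \eqref{estimate: E|D+mh|2p} with a constant $K_{1,p}$ independent of $h$, and substituting back delivers \eqref{estimate: E(int|mh x D2mh|)^p}.

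The main obstacle I anticipate is the $h$-uniform bookkeeping in the drift analysis. Since the constraint $|m^h|=1$ only holds at grid points, every discrete integration by parts generates shift terms of the form $\langle m^h, \partial^h m^{h-}\rangle$ or differences $(\kappa^2)^- - \kappa^2$ which, under a naive bound, would carry factors of $h^{-1}$ and destroy the uniform estimate. Matching each such shift against the corresponding term built into $S^h$, and verifying that all the residues collapse to quantities bounded by $|\partial^h m^h|_{\L_h^2}^2 + |m^h\times\Delta^h m^h|_{\L_h^2}^2$ with constants depending only on $C_\kappa$ (via Remark~\ref{rem1}), is the combinatorial heart of the argument; the closure by BDG and Gronwall is then routine.
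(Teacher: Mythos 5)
Your proposal follows the paper's proof essentially verbatim: the same It\^o expansion of $\tfrac12|\partial^h m^h|^2_{\L_h^2}$, the same term-by-term cancellation between the modified Stratonovich correction $S^h$ and the It\^o correction (yielding the coefficient $(1+2\gamma^2)C_\kappa^2$ on $|m^h\times\Delta^h m^h|^2_{\L_h^2}$), the same $p$-th power/BDG/absorption step via $N_{1,p},N_{2,p}>0$, and the same Gronwall closure. The only part left schematic is the detailed bookkeeping of the shift terms in $T_2+T_3$, which you correctly identify as the combinatorial core and describe accurately in structure, so the argument is sound.
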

	\begin{proof}
	As in Lemma \ref{Lemma: solution of disSDE}, let $ \phi(u) = \frac{1}{2} | \partial^{h} u|_{\L_h^2}^2 $ for $ u \in \mathbb{E}_h $. Then, for $ \nu,w \in \mathring{\H}_h^1 $,
	\begin{align*}
		\phi'(u) \nu &= \langle \partial^{h} u, \partial^{h} \nu \rangle_{\L_h^2} = - \langle \Delta^h u, \nu \rangle_{\L_h^2}, \quad
		\phi''(u)(\nu,w) = \langle \partial^{h}\nu, \partial^{h} w \rangle_{\L_h^2}.
	\end{align*}
	By It{\^o}'s lemma,
	\begin{equation}\label{pf: phi Ito}
		\begin{aligned}
			\frac{1}{2} | \partial^{h} m^h(t)|_{\L_h^2}^2- \frac{1}{2} | \partial^{h} m^h(0)|_{\L_h^2}^2
			&=\phi(m^h(t)) - \phi(m^h(0)) \\
			&= - \int_0^t \lb \Delta^h m^h(s), F^h(m^h)(s)  \rb_{\L_h^2} ds \\
			&\quad - \int_0^t \lb \Delta^h m^h(s), \frac{1}{2} S^h(m^h)(s)\rb_{\L_h^2} ds \\
			&\quad + \frac{1}{2} \int_0^t \sum_j | \partial^{h} (q_j f_j G^h(m^h)(s) ) |_{\L_h^2}^2 \ ds \\
			&\quad - \int_0^t \langle \Delta^h m^h(s), G^h(m^h)(s) \ dW(s)  \rangle_{\L_h^2} \\
			&:= \int_0^t \la T_1(s) + T_2(s) + T_3(s)\ra \,ds - M^h(t),
		\end{aligned}
	\end{equation}
	where $ M^h(t) $ is already estimated in Lemma \ref{Lemma: Gh est}. 
		
	\underline{An estimate on $T_1$:}
	\begin{equation}\label{pf: est T11}
	\begin{aligned}
		T_1(s) &= -\langle \Delta^h m^h(s), F^h(m^h)(s) \rangle_{\L_h^2} \\
		&= \alpha \lb \Delta^h m^h(s), m^h(s) \times \la m^h(s) \times \Delta^h m^h(s)\ra \rb_{\L_h^2} \\
		&\quad -\lb \Delta^h m^h(s),  m^h(s) \times \la m^h(s) \times v^h(s)\partial^{h} m^h(s) + \gamma v^h(s)\partial^{h} m^h(s)\ra \rb_{\L_h^2} \\
		&= - \alpha |m^h(s) \times \Delta^h m^h(s) |^2_{\L_h^2} \\
		&\quad + \lb m^h(s) \times \Delta^h m^h(s), m^h(s) \times v^h(s)\partial^{h} m^h(s) + \gamma v^h(s)\partial^{h} m^h(s) \rb_{\L_h^2}. 
	\end{aligned}
	\end{equation}
	The second term on the right hand side of \eqref{pf: est T11} is estimated using \eqref{Cv bound} and the fact that $ |m^h|=1 $ $ \P $-a.s., as follows
	\begin{equation}\label{pf: <D2mh, Fh>}
		\begin{aligned}
			&\lb m^h(s) \times \Delta^h m^h(s), m^h(s) \times v^h(s)\partial^{h} m^h(s) + \gamma v^h(s)\partial^{h} m^h(s) \rb_{\L_h^2} \\
			&\leq  \epsilon^2 |m^h(s) \times \Delta^h m^h(s)|_{\L_h^2}^2 + \frac{1}{2\epsilon^2} C_v^2 (1+\gamma^2) |\partial^{h} m^h(s)|_{\L_h^2}^2,
		\end{aligned}	
	\end{equation}
		for arbitrary $ \epsilon>0 $. 
		An estimate of $T_1$ is obtained from \eqref{pf: est T11} and \eqref{pf: <D2mh, Fh>}
		\begin{align}\label{pf: est T1}
		T_1 \leq (\epsilon^2 - \alpha )|m^h(t) \times \Delta^h m^h(t)|_{\L_h^2}^2 + \frac{1}{2\epsilon^2} C_v^2 (1+\gamma^2) |\partial^{h} m^h(t)|_{\L_h^2}^2.
		\end{align}
		
	\underline{An estimate on $T_2$:}
	\begin{align*}
		T_2(s) 
		&=-\frac12\langle \Delta^h m^h(s), S^h(m^h(s)) \rangle_{\L_h^2} \\
		&= \frac{1}{2}\lb m^h\times\Delta^h m^h, \kappa \kappa' \la m^h\times G^h(m^h)+ \gamma G^h(m^h)\ra \rb_{\L_h^2} \\
		&\quad - \frac{1}{2} \lb \Delta^h m^h, \mathcal{G}^h_\kappa(m^h) \rb_{\L_h^2} \\
		&= T_{21} + T_{22}.
	\end{align*}
	Using $ |m^h|=1 $, $ \P $-a.s., we estimate $T_{21}$:
	\begin{align}\label{pf: est T21}
		T_{21}
		&= \frac{1}{2} \lb m^h \times \Delta^h m^h, \kappa \kappa' \left[ (\gamma^2-1) m^h \times \partial^h m^h + 2 \gamma m^h \times (m^h \times \partial^h m^h) \right] \rb_{\L_h^2}\nonumber \\
		&\leq \frac{1}{2} \epsilon^2 |m^h(s)\times\Delta^h m^h(s)|^2_{\L_h^2} +  \frac{1}{4\epsilon^2}((\gamma^2-1)^2+ 4\gamma^2) \ |\kappa \kappa'|_{\L_h^\infty}^2 |\partial^{h} m^h(s)|^2_{\L_h^2}.
	\end{align}
	where $ |\kappa \kappa'|_{\L_h^\infty}^2 \leq C_\kappa^4 $ by \eqref{Ckappa bound}. 

	To estimate $T_{22}$, we first note that for any $u:\Z_h \to {\b S}^2 $, 
	\begin{align*}
		\mathcal{G}^h_\kappa(u) 
		&= \frac{1}{2}\la (\kappa^2)^- + \kappa^2 \ra \left[(\gamma^2 -1) u \times \la u \times \Delta^h u\ra - 2 \gamma u \times \Delta^h u \right] \\
		&\quad - \gamma^2 \kappa^2 \partial^h u \times (u \times \partial^h u) \\
		&\quad - \kappa^2 |u \times \partial^h u|^2 u + 2\gamma (\kappa^2)^- \langle u, (\partial^h u)^- \rangle u \times \partial^h u.
	\end{align*}
	By \eqref{dis udotDelu}, we deduce
	\begin{align}\label{pf: est T22 1}
		T_{22}(s) 
		&= - \frac12h \sum_x \lb \Delta^h m^h, \mathcal{G}^h_\kappa (m^h)\rb(s,x)\nonumber\\
		&= \frac{1}{4}(\gamma^2-1) h \sum_x \la (\kappa^2)^- + \kappa^2 \ra |m^h\times \Delta^h m^h|^2 (s,x)\nonumber\\
		&\quad + \frac{1}{2}\gamma^2 h \sum_x \kappa^2 \lb \Delta m^h, \partial^h m^h \times (m^h \times \partial^h m^h) \rb (s,x) \nonumber \\
		&\quad - \frac{1}{4} h \sum_x \kappa^2 |m^h \times \partial^h m^h|^2 \la|\partial^{h} m^h|^2 + |(\partial^{h} m^h)^-|^2\ra (s,x)\nonumber\\
		&\quad - \gamma h \sum_x (\kappa^2)^- \lb \Delta^h m^h, m^h \times \partial^h m^h \rb \lb m^h, (\partial^h m^h)^- \rb (s,x) \nonumber \\
		&= T_{22a}(s) + T_{22b}(s) + T_{22c}(s) + T_{22d}(s).
	\end{align}
	It is clear that 
	\begin{align}\label{pf: est T22a}
		T_{22a}(s) 
		&= \frac{1}{4}(\gamma^2-1) h \sum_x \la (\kappa^2)^- + \kappa^2 \ra |m^h\times \Delta^h m^h|^2 (s,x) \nonumber \\
		&\leq \frac{1}{2} \gamma^2 |\kappa|^2_{\L_h^\infty} |m^h \times \Delta^h m^h|^2_{\L_h^2}(s) - \frac{1}{4} h \sum_x \la (\kappa^2)^- + \kappa^2 \ra |m^h\times \Delta^h m^h|^2 (s,x),
	\end{align}
	where the second term on the right-hand side will cancel with parts of $ T_3 $. 
	
	To estimate $ T_{22b} $, we observe that for any $ u: \Z_h \to {\b S}^2 $, using \eqref{dis udotDelu},
	\begin{align}\label{pf: est T22b 1}
		\lb \Delta^h u, \partial^h u \times (u \times \partial^h u) \rb 
		&= |\partial^h u|^2 \lb u, \Delta^h u \rb - \lb u, \partial^h u \rb \lb \partial^h u, \Delta^h u \rb \nonumber \\
		&= -\frac{1}{2} |\partial^h u|^2 \la |\partial^h u|^2 + |\partial^h u^-|^2 \ra + \frac{1}{2} |\partial^h u|^2 \la |\partial^h u|^2 - \lb \partial^h u, \partial^h u^- \rb \ra \nonumber \\
		&= -\frac{1}{2} |\partial^h u|^2 |\partial^h u^-|^2 - \frac{1}{2} |\partial^h u|^2 \lb \partial^h u, \partial^h u^- \rb,
	\end{align}
	where
	\begin{equation}\label{pf: est T22b 1-1}
		\lb \partial^h u, \partial^h u^- \rb 
		= \frac{1}{2} \la |\partial^h u|^2 + |\partial^h u^-|^2 -h^2 |\Delta^h u|^2 \ra. 
	\end{equation}
	If $ |\partial^h u(x)| \leq |\partial^h u^-(x)| $ at some $ x \in \Z_h $, then 
	\begin{align*}
		-\lb \partial^h u, \partial^h u^- \rb(x) \leq |\partial^h u^-(x)|^2,
	\end{align*}
	and by \eqref{pf: est T22b 1},
	\begin{align*}
		\lb \Delta^h u, \partial^h u \times (u \times \partial^h u) \rb(x) 
		&\leq -\frac{1}{2} |\partial^h u(x)|^2 |\partial^h u^-(x)|^2 + \frac{1}{2} |\partial^h u(x)|^2 |\partial^h u^-(x)|^2 \\
		&= 0.
	\end{align*}
	If $ |\partial^h u(x)| \geq |\partial^h u^-(x)| $, then we can show that the term given by \eqref{pf: est T22b 1} is bounded by $ |u \times \Delta^h u|^2(x) $. Explicitly, by \eqref{pf: est T22b 1-1},
	\begin{align*}
		&|u \times \Delta^h u|^2(x) - \lb \Delta^h u, \partial^h u \times (u \times \partial^h u) \rb (x)\\
		&= |\Delta^h u|^2 - \lb u, \Delta^h u \rb^2 + \frac{1}{2} |\partial^h u|^2 |\partial^h u^-|^2 + \frac{1}{2} |\partial^h u|^2 \lb \partial^h u, \partial^h u^- \rb \\
		&= |\Delta^h u|^2 - \frac{1}{4} \la |\partial^h u|^2 + |\partial^h u^-|^2 \ra^2 + \frac{1}{2} |\partial^h u|^2 |\partial^h u^-|^2 + \frac{1}{4} |\partial^h u|^2 \la |\partial^h u|^2 + |\partial^h u^-|^2 -h^2 |\Delta^h u|^2 \ra \\
		&= \la 1- \frac{1}{4}h^2 |\partial^h u|^2 \ra |\Delta^h u|^2 - \frac{1}{4} |\partial^h u^-|^4 + \frac{1}{4} |\partial^h u|^2 |\partial^h u^-|^2 \\
		& \geq 0,
	\end{align*}
	where the last inequality holds by $ h^2 |\partial^h u|^2 \leq 4 $ and $ |\partial^h u(x)| \geq |\partial^h u^-(x)| $. 
	Combining the two cases and replacing $ u $ by $ m^h(s) $, we have
	\begin{align}\label{pf: est T22b}
		T_{22b}(s) 
		&= \frac{1}{2} \gamma^2 h \sum_x \kappa^2 \lb \Delta^h m^h, \partial^h m^h \times (m^h \times \partial^h m^h) \rb(s,x) \nonumber \\
		&\leq \frac{1}{2} \gamma^2 h \sum_x \kappa^2 |m^h \times \Delta^h m^h|^2(s,x) \ \mathbbm{1}_{\{ |\partial^h m^h(s,x)| \geq |\partial^h {m^h}^-(s,x)| \}} \nonumber \\
		&\leq \frac{1}{2} \gamma^2 C_\kappa^2 \ |m^h \times \Delta^h m^h|^2_{\L_h^2}(s).
	\end{align}
	We will see later in the proof that $ T_{22c} $ and $ T_{22d} $ also cancel with parts of $ T_3 $.

	\underline{An estimate on $ T_3 $:}			
	\begin{align*}
		T_3(s)
		&= \frac{1}{2} \sum_j \left| \partial^{h} (q_jf_j G^h(m^h)) \right|^2_{\L_h^2} (s) \\
		&= \frac{1}{2} \sum_j q_j^2 \left| \partial^{h} (f_j G^h(m^h)) \right|^2_{\L_h^2} (s) \\
		&= \frac{1}{2} h \sum_x \sum_j q_j^2 \left| \partial^{h} \la f_j m^h \times (m^h \times \partial^{h} m^h)  \ra \right|^2(s,x)  
				+ \frac{1}{2} \gamma^2 h \sum_x \sum_j q_j^2 \left| \partial^{h} (f_j m^h \times \partial^{h}m^h) \right|^2(s,x) \\
		&\quad + \gamma h \sum_x \sum_j q_j^2 \lb \partial^{h} \left(f_j m^h \times (m^h \times \partial^{h} m^h) \right), \partial^{h} \left(f_j m^h \times \partial^{h} m^h \right) \rb(s,x) \\
		&= T_{31}(s) + T_{32}(s) + T_{33}(s). 
	\end{align*}

	We first estimate $ T_{31}(s) $. 
	For $ u : \Z_h \to {\b S}^2 $, we have for every $ j \geq 1 $, 
	\begin{align*}
		&\partial^{h} \la f_j u \times (u \times \partial^{h} u) \ra(x) \\
		&= \frac{1}{2} \la \partial^{h} f_j \ u^+ \times (u^+ \times \partial^{h} u^+) + f_j \partial^{h} \la u \times (u \times \partial^{h} u) \ra \ra (x) \\
		&\quad + \frac{1}{2} \la \partial^{h}f_j \ u \times (u \times \partial^{h} u) + f_j^+ \partial^{h} \la u \times (u \times \partial^{h} u) \ra \ra (x) \\
		&= \frac{1}{2} \partial^{h}f_j \la u^+ \times (u^+ \times \partial^{h} u^+) + u \times (u \times \partial^{h} u) \ra (x) \\
		&\quad + \frac{1}{2}\left[f_j \partial^{h}u \times(u \times \partial^{h} u) + f_j^+ u \times \partial^{h}(u \times \partial^{h} u) \right] (x) \\
		&\quad + \frac{1}{2}\left[ f_j^+ \partial^{h}u \times (u^+ \times \partial^{h} u^+) + f_j u^+ \times \partial^{h}(u \times \partial^{h} u) \right] (x) \\
		&= \frac{1}{2}A_0(x) + \frac{1}{2} \left[ A_1(x) + B_1(x) \right] + \frac{1}{2} \left[ A_2(x) + B_2(x) \right],
	\end{align*}
	where
	\begin{align*}
		A_0(x) &= \partial^{h}f_j \la u^+ \times (u^+ \times \partial^{h} u^+) + u \times (u \times \partial^{h} u) \ra (x), \\
		A_1(x) &= f_j \partial^{h}u \times(u \times \partial^{h} u), \\
		A_2(x) &= f_j^+ \partial^{h}u \times (u^+ \times \partial^{h} u^+), \\
		B_1(x) &= f_j^+ u \times \partial^{h}(u \times \partial^{h} u), \\
		B_2(x) &= f_j u^+ \times \partial^{h}(u \times \partial^{h} u). 
	\end{align*}
	Hence,
	\begin{align*}
		&\frac{1}{2} \left| \partial^{h} \la f_j u \times (u \times \partial^{h} u) \ra \right|^2(x) \\
		&= \frac{1}{2} \la \frac{1}{4} |A_0|^2(x) + \frac{1}{4} |A_1 + B_1 + A_2 + B_2|^2(x) + \frac{1}{2} \lb A_0, A_1 + A_2 + B_1 + B_2 \rb(x) \ra \\
		&\leq \frac{1}{2} \la \frac{1}{4} |A_0|^2(x) + \frac{1}{2} |A_1 + B_1|^2(x) + \frac{1}{2} |A_2 + B_2|^2(x) + \frac{1}{2} \lb A_0, A_1 + A_2 + B_1 + B_2 \rb(x) \ra \\
		&= \frac{1}{8} |A_0|^2(x) + \frac{1}{4} \la |A_1|^2 + |B_1|^2 + |A_2|^2 + |B_2|^2 \ra(x) \\
		&\quad + \frac{1}{2}\lb A_1, B_1 \rb(x) + \frac{1}{2} \lb A_2, B_2 \rb(x) + \frac{1}{4} \lb A_0, A_1 + A_2 + B_1 +B_2 \rb(x).
	\end{align*}	
	For the square $ \frac{1}{8} |A_0|^2(x) $:
	\begin{equation}\label{pf: Ito correction |A0|^2}
		\begin{aligned}
			\frac{1}{8} h \sum_x \sum_j q_j^2 |A_0|^2(x) 
			&= \frac{1}{8} h \sum_x \sum_j q_j^2 | \partial^{h} f_j |^2  \ \left| u^+ \times (u^+ \times \partial^{h} u^+) + u \times (u \times \partial^{h} u) \right|^2(x) \\
			&\leq \frac{1}{8} C_\kappa^2 \ h \sum_x \left| u^+ \times (u^+ \times \partial^{h} u^+) + u \times (u \times \partial^{h} u) \right|^2(x) \\
			&\leq \frac{1}{2} C_\kappa^2 \ |\partial^{h} u|_{\L_h^2}^2, 
		\end{aligned}	
	\end{equation}
	where the second inequality holds by applying the Mean Value Theorem to $ f_j $ on the interval $ [x,x+h] $ for every $ j \geq 1 $, such that there exists some $ \xi_h \in (x,x+h) $ satisfying
	\begin{align*}
		\left|\frac{f_j(x+h)-f_j(x)}{h} \right| = |f_j'(\xi_h)|,
	\end{align*}
	and $ |\sum_j q_j^2 (f_j')^2|_{\L^\infty} \leq C_\kappa^2 $ by assumption \eqref{Ckappa bound}. 
		
	For the squares $ \frac{1}{4}|A_1|^2(x) $ and $ \frac{1}{4}|A_2|^2(x) $: 
	\begin{equation}\label{pf: Ito correction |A1,A2|^2 power 4}
		\begin{aligned}	
			& \frac{1}{4} h \sum_x \sum_j q_j^2 \la |A_1|^2(x) + |A_2|^2(x) \ra \\
			&= \frac{1}{4} \sum_x \sum_j q_j^2 \la \left| f_j \partial^{h}u \times (u \times \partial^{h} u) \right|^2 + \left| f_j^+ \partial^{h} u \times (u^+ \times \partial^{h} u^+) \right|^2 \ra(x) \\
			&\leq \frac{1}{4} h \sum_x \sum_j q_j^2 f_j^2 \ |\partial^{h}u|^2 \ |u \times \partial^{h}u|^2(x) + \frac{1}{4} h \sum_x \sum_j q_j^2 (f_j^+)^2 \ |\partial^{h}u|^2 \ |u^+ \times \partial^{h} u^+|^2(x) \\
			&= \frac{1}{4} h \sum_x \la \sum_j q_j^2 f_j^2 \ra |u \times \partial^{h} u|^2 \la |\partial^{h}u|^2 + |\partial^h u^-|^2 \ra(x) \\
			&= \frac{1}{4} h \sum_x \kappa^2 |u \times \partial^{h} u|^2 \la |\partial^{h}u|^2 + |\partial^h u^-|^2 \ra(x),
		\end{aligned}
	\end{equation}
	where the right-hand side cancels with $ T_{22c}(s) $ in \eqref{pf: est T22 1} when $ u $ is replaced with $ m^h(s) $. 
		
	For the squares $ \frac{1}{4}|B_1|^2(x) $ and $ \frac{1}{4}|B_2|^2(x) $, we first observe that
	\begin{equation}\label{pf: D+h(u x Du) in terms of u x D2hu}
	\begin{aligned}
		\partial^{h} \la u \times \partial^{h}u\ra(x) 
		&= u^+\times \Delta^hu^+.
	\end{aligned}
	\end{equation}
	Then, 
	\begin{align*}
		&\frac{1}{4}\la |B_1|^2(x) + |B_2|^2(x) \ra \\
		&= \frac{1}{4} (f_j^+)^2 \left|u \times \la u^+ \times \Delta^h u^+  \ra \right|^2(x) 
		+ \frac{1}{4} f_j^2 \left|u^+ \times \la u^+ \times \Delta^h u^+ \ra \right|^2(x) \\
		&\leq \frac{1}{4} \la (f_j^+)^2 + f_j^2 \ra \left| u^+ \times \Delta^h u^+  \right|^2(x).
	\end{align*}
	This implies that
	\begin{equation}\label{pf: Ito correction |B|^2}
	\begin{aligned}
		\frac{1}{4} h \sum_x \sum_j q_j^2 \la |B_1|^2(x) + |B_2|^2(x) \ra 
		&= \frac{1}{4} h \sum_x \sum_j q_j^2 \la (f_j^-)^2 + f_j^2 \ra \left| u \times \Delta^h u  \right|^2(x) \\
		&= \frac{1}{4} h \sum_x \la (\kappa^2)^- + \kappa^2 \ra \left| u \times \Delta^h u  \right|^2(x), 
	\end{aligned}
	\end{equation}
	where the right-hand side cancels with a part of the estimate for $ T_{22a} $ in \eqref{pf: est T22a} when $ u = m^h(s) $ as aforementioned. 
		
	For the cross terms $ \frac{1}{2} \lb A_1,B_1 \rb(x) $ and $ \frac{1}{2} \lb A_2,B_2 \rb(x) $:
	\begin{align*}
		\lb A_1, B_1 \rb(x) 
		&= f_j f_j^+ \lb \partial^{h} u \times (u \times \partial^{h}u), u \times \partial^{h}(u \times \partial^{h} u) \rb(x) \\
		&= f_j f_j^+ \lb |\partial^{h} u|^2 u - \langle \partial^{h}u, u \rangle \partial^hu, u \times \partial^{h}(u \times \partial^{h} u) \rb(x) \\
		&= f_j f_j^+ \langle u, \partial^{h} u \rangle \lb u \times \partial^{h}u, \partial^{h}(u \times \partial^{h}u) \rb(x),
	\end{align*}
	and similarly,
	\begin{align*}
		\lb A_2, B_2 \rb(x)
		&= f_j f_j^+ \lb \partial^{h}u \times (u^+ \times \partial^{h}u^+), u^+ \times \partial^{h}(u \times \partial^{h}u) \rb(x) \\
		&= f_j f_j^+ \langle u^+, \partial^{h}u \rangle \lb u^+ \times \partial^{h}u^+, \partial^{h}(u \times \partial^{h}u) \rb(x).
	\end{align*}
	Then,
	\begin{equation}\label{pf: <A,B> cal}
		\begin{aligned}
			&\lb A_1,B_1 \rb + \lb A_2,B_2 \rb(x) \\
			&= f_j f_j^+ \lb \langle u, \partial^{h} u \rangle u \times \partial^{h}u + \langle u^+, \partial^{h}u \rangle u^+ \times \partial^{h}u^+, \ \partial^{h}(u \times \partial^{h}u) \rb(x).
		\end{aligned}
	\end{equation}
	By \eqref{pf: D+h(u x Du) in terms of u x D2hu},
	the left term in the inner product \eqref{pf: <A,B> cal} can be simplified as
	\begin{align*}
		&\langle u, \partial^{h} u \rangle u \times \partial^{h}u + 
		\langle u^+, \partial^{h}u \rangle u^+ \times \partial^{h}u^+ \\
		&= \la \langle u,\partial^{h}u \rangle + \langle u^+, \partial^{h}u \rangle \ra u \times \partial^{h}u 
		+ \langle u^+, \partial^{h} u \rangle h \la u^+ \times \Delta^hu^+ \ra \\
		&= \langle u^+, u^+-u \rangle \la u^+ \times \Delta^hu^+ \ra,
	\end{align*}
	where the second equality holds by observing $ \langle u+u^+, \partial^{h}u \rangle(x) = 0 $ due to $ |u(x)| = 1 $ for all $ x $. 
	Recall that the right term in the inner product \eqref{pf: <A,B> cal} is $ \partial^{h}(u \times \partial^{h}u) = u^+ \times \Delta^hu^+ $ by \eqref{pf: D+h(u x Du) in terms of u x D2hu}.  
	Then, 
	\begin{align*}
		\frac{1}{2} \la \lb A_1,B_1 \rb + \lb A_2,B_2 \rb(x) \ra
		&= \frac{1}{2} f_j f_j^+ \langle u^+, u^+-u \rangle \left| u^+ \times \Delta^hu^+ \right|^2(x) \\
		&\leq \frac{1}{2} \la f_j^2 + (f_j^+)^2 \ra |u^+ \times \Delta^hu^+|^2(x)
	\end{align*}
	Taking the sum over $ x \in \Z_h $, 
	\begin{equation}\label{pf: <A,B>}
	\begin{aligned}
		\frac{1}{2} h \sum_x \sum_j q_j^2 \la \lb A_1, B_1 \rb + \lb A_2, B_2 \rb \ra(x)
		&\leq \frac{1}{2} h \sum_x \la \kappa^2 + (\kappa^2)^+ \ra  |u^+ \times \Delta^hu^+|^2(x) \\
		&\leq C_\kappa^2 \ |u \times \Delta^hu|_{\L_h^2}^2. 
	\end{aligned}
	\end{equation}
	
	For the cross term $ \frac{1}{4}\lb A_0, \ A_1 + A_2 + B_1 + B_2\rb(x) $:
	\begin{align*}
		A_0(x) &= \frac{f_j^+-f_j}{h} \la u^+ \times (u^+ \times \partial^{h}u^+) + u \times(u \times \partial^{h} u) \ra(x),
	\end{align*}
	and 
	\begin{align*}
		(A_1 + A_2 + B_1 + B_2)(x)
		&= (f_j+f_j^+) \partial^{h}\la u \times (u \times \partial^{h}u) \ra(x) \\
		&= \frac{f_j+f_j^+}{h} \la u^+ \times (u^+ \times \partial^{h}u^+) - u \times(u \times \partial^{h} u) \ra(x),
	\end{align*}
	which imply
	\begin{align*}
		&\frac{1}{4} \lb A_0, \ A_1 + A_2 + B_1 + B_2 \rb(x) \\
		&= \frac{1}{4h^2} \la (f_j^+)^2 - f_j^2 \ra \la |u^+ \times (u^+\times \partial^{h}u^+)|^2 - |u \times(u \times \partial^{h}u)|^2 \ra(x). 
	\end{align*}
	Then, using again the Mean Value Theorem for $ \Delta^h (f_j)^2 $, 
	\begin{equation}\label{pf: <A0,A1+A2+B1+B2>}
		\begin{aligned}
			&\frac{1}{4} h \sum_x \sum_j q_j^2 \lb A_0, \ A_1 + A_2 + B_1 + B_2 \rb(x) \\
			&= \frac{1}{4} h \sum_x \sum_j q_j^2 \frac{1}{h^2}\la f_j^2 - (f_j^-)^2 - (f_j^+)^2 + f_j^2 \ra |u \times (u \times \partial^{h} u)|^2(x) \\
			&= -\frac{1}{4} h \sum_x \sum_j q_j^2 \Delta^h \la f_j^2 \ra |u \times (u \times \partial^{h} u)|^2(x) \\
			&\leq \frac{1}{2}C_\kappa^2 \ |\partial^{h} u|_{\L_h^2}^2.
		\end{aligned}
	\end{equation}
	
	Therefore, by \eqref{pf: Ito correction |A0|^2}, \eqref{pf: Ito correction |A1,A2|^2 power 4},  \eqref{pf: Ito correction |B|^2}, \eqref{pf: <A,B>} and \eqref{pf: <A0,A1+A2+B1+B2>},
	\begin{equation}\label{pf: est T31}
		\begin{aligned}
			T_{31}(s) 
			&= \frac{1}{2} h \sum_x \left| \partial^{h} \la f_j m^h \times (m^h \times \partial^{h}m^h) \ra \right|^2(s,x) \\
			&\leq C_\kappa^2 |\partial^{h} m^h|^2_{\L_h^2}(s) + C_\kappa^2 |m^h \times \Delta^h m^h|^2_{\L_h^2}(s) \\
			&\quad + \frac{1}{4} h \sum_x \kappa^2 |m^h \times \partial^{h}m^h|^2 \la |\partial^{h}m^h|^2 + |(\partial^h m^h)^-|^2 \ra(s,x) \\
			&\quad + \frac{1}{4} h \sum_x \la \kappa^2 +(\kappa^2)^- \ra |m^h \times \Delta^h m^h|^2(s,x).
		\end{aligned}
	\end{equation}

	Next, we estimate $ T_{32}(s) $. Using \eqref{pf: D+h(u x Du) in terms of u x D2hu},
	\begin{equation}\label{pf: est T32}
		\begin{aligned}
			T_{32}(s) 
			&= \frac{1}{2} \gamma^2 h \sum_x \sum_j q_j^2 |\partial^{h}(f_j m^h \times \partial^{h}m^h)|^2(s,x) \\
			&= \frac{1}{2} \gamma^2 h \sum_x \sum_j q_j^2 |(\partial^{h} f_j) m^h \times \partial^{h}m^h + f_j^+ (m^h \times \Delta^h m^h)^+|^2(s,x) \\
			&\leq \gamma^2 h \sum_x \sum_j q_j^2 |\partial^{h} f_j|^2 \  |\partial^{h}m^h|^2(s,x) + \gamma^2 h \sum_x \la \sum_j q_j^2 f_j^2 \ra |m^h \times \Delta^h m^h|^2(s,x) \\
			&\leq \gamma^2 C_\kappa^2 \la |\partial^{h} m^h|^2_{\L_h^2}(s) + |m^h \times \Delta^h m^h|^2_{\L_h^2} \ra. 
		\end{aligned}
	\end{equation}

	Finally, we estimate $ T_{33}(s) $. We note that for $ u = u(x) $ with $ |u(x)|=1 $ for all $ x $ and for all $ j \geq 1 $, 
	\begin{align*}
		&\lb \partial^{h} \la f_j u \times (u \times \partial^{h}u) \ra, \partial^{h} \la f_j u \times \partial^{h}u \ra \rb \\
		&= \lb (\partial^{h} f_j) u^+ \times (u^+ \times \partial^{h}u^+) + f_j \partial^{h}(u \times (u \times \partial^{h}u)), (\partial^{h} f_j) u^+ \times \partial^{h}u^+ +f_j (u \times \Delta^h u)^+ \rb \\
		&= (\partial^{h} f_j) f_j \lb u^+ \times (u^+ \times \partial^{h}u^+), (u \times \Delta^h u)^+ \rb \\
		&\quad + \lb f_j \partial^{h}u \times (u^+ \times \partial^{h}u^+) + f_j u \times (u \times \Delta^h u)^+, (\partial^{h} f_j) u^+ \times \partial^{h}u^+ +f_j (u \times \Delta^h u)^+ \rb \\
		&= (\partial^{h} f_j) f_j \lb (u^+-u) \times (u^+ \times \partial^{h}u^+), (u \times \Delta^h u)^+ \rb \\
		&\quad + f_j^2 \lb \partial^{h}u \times (u^+ \times \partial^{h}u^+), u^+ \times \Delta^h u^+ \rb. 
	\end{align*}
	Since $ \sum_j q_j^2 |f_j| |\partial^h f_j|(x) \leq C_\kappa^2 $ for all $ x \in \Z_h $, 
	we have 
	\begin{equation}\label{pf: est T33}
		\begin{aligned}
			T_{33}(s)
			&= \gamma h \sum_x \sum_j q_j^2 \lb \partial^{h} \la f_j m^h \times m^h \times \partial^{h}m^h \ra, \partial^{h}\la f_j m^h \times \partial^{h}m^h \ra \rb(s,x) \\
			&\leq |\gamma| h \sum_x \sum_j q_j^2 |\partial^{h} f_j^-| \ |f_j^-| \la \frac{1}{\epsilon^2} |\partial^{h}m^h|^2 + \epsilon^2 |m^h \times \Delta^h m^h|^2 \ra(s,x) \\
			&\quad + \gamma h \sum_x \sum_j q_j^2 (f_j^-)^2 \lb m^h, (\partial^h m^h)^- \rb \lb m^h \times \partial^{h}m^h, \Delta^h m^h \rb(s,x) \\
			&= |\gamma| C_\kappa^2 \la \frac{1}{\epsilon^2} |\partial^{h}m^h|^2_{\L_h^2}(s) + \epsilon^2  |m^h \times \Delta^h m^h|^2_{\L_h^2}(s) \ra - T_{22d}(s),
			\end{aligned}
	\end{equation}
	where $ T_{22d}(s) $ is given in \eqref{pf: est T22 1}.

	\underline{An estimate on $ T_1 + T_2 + T_3 $:}
	\begin{align*}
		T_1 + T_2 + T_3
		&= T_1 + T_{21} + T_{22a} + T_{22b} + T_{22c} + T_{22d} + T_{31} + T_{32} + T_{33},
	\end{align*}
	where by \eqref{pf: est T22a} and \eqref{pf: est T31}, 
	\begin{align*}
		T_{22a}(s) + T_{22c}(s) + T_{31}(s) 
		&\leq \la \frac{1}{2} \gamma^2 +1 \ra C_\kappa^2 |m^h \times \Delta^h m^h|^2_{\L_h^2}(s) + C_\kappa^2 |\partial^{h} m^h|^2_{\L_h^2}(s),
	\end{align*}
	and by \eqref{pf: est T33}, 
	\begin{align*}
		T_{22d}(s) + T_{33}(s)
		&\leq  |\gamma| C_\kappa^2 \la \frac{1}{\epsilon^2} |\partial^{h}m^h|^2_{\L_h^2}(s) + \epsilon^2  |m^h \times \Delta^h m^h|^2_{\L_h^2}(s) \ra.
	\end{align*}
	Then, by \eqref{pf: est T1}, \eqref{pf: est T21}, \eqref{pf: est T22b} and \eqref{pf: est T32}, 
	\begin{equation}\label{pf: F + Stratonovich + Ito correction}
		T_1(s) + T_2(s) + T_3(s)
		\leq \frac{1}{2}C_{1,\epsilon} \ |\partial^h m^h(s)|^2_{\L_h^2} +\la \frac{1}{2} C_{2,\epsilon} - \alpha \ra \ |m^h(s) \times \Delta^h m^h(s)|^2_{\L_h^2}, 
	\end{equation}
	where
	\begin{equation}\label{pf: C1epsilon, C2epsilon}
		\begin{aligned}
			C_{1,\epsilon} &:= \frac{1}{2\epsilon^2}\left[ \la \gamma^2 + 1 \ra^2 C_\kappa^4 + 4|\gamma| C_\kappa^2 + 2C_v^2 (1+\gamma^2) \right] + 2\la 1 + \gamma^2 \ra C_\kappa^2, \\
			C_{2,\epsilon} &:= \la 4 \gamma^2 +2 \ra C_\kappa^2 + \epsilon^2 \la 3 + 2|\gamma| C_\kappa^2 \ra. 
		\end{aligned}
	\end{equation}

	\underline{Uniform estimate of $ \partial^h m^h $}: 
		
	Using \eqref{pf: phi Ito} and \eqref{pf: F + Stratonovich + Ito correction}, we have
	\begin{equation}\label{pf: Gronwall ineq: |D+mh|^2<=...+|Mh|}
		\begin{aligned}
			&|\partial^h m^h(t)|^{2}_{\L_h^2} 
			+ (2\alpha-C_{2,\epsilon}) \int_0^t | m^h(s) \times \Delta^h m^h(s)|_{\L_h^2}^2 \ ds \\
			&\leq |\partial^h m_0|^2_{\L_h^2} 
			+ C_{1,\epsilon} \int_0^t |\partial^h m^h(s)|^2_{\L_h^2} \ ds 
			+ 2\sup_{t \in [0,T]} |M^h(t)|. 
		\end{aligned}
	\end{equation}
	Taking a sufficiently small $ \epsilon $ such that $ \frac{1}{2}\epsilon^2 \la 3 + 2|\gamma| C_\kappa^2 \ra < \delta $, we have from \eqref{defn: N1p, N2p>0}:
	\begin{equation}\label{pf: C2 vs alpha neg_coef}
		2\alpha-C_{2,\epsilon} > 0.
	\end{equation}
	Then, for $ p \geq 1 $ and $ q = \frac{p}{p-1} $, 
	\begin{align*}
		&\E \left[ \sup_{t \in [0,T]} |\partial^h m^h(t)|^{2p}_{\L_h^2} + (2\alpha - C_{2,\epsilon})^p \la \int_0^T |m^h(s) \times \Delta^h m^h(s)|_{\L_h^2}^2 \ ds \ra^p \right] \\
		&\leq \E \left[ \la \sup_{t \in [0,T]} |\partial^h m^h(t)|^{2}_{\L_h^2} + (2\alpha - C_{2,\epsilon}) \int_0^T |m^h(s) \times \Delta^h m^h(s)|_{\L_h^2}^2 \ ds \ra^p \right] \\
		&\leq \E \left[ \la |\partial^h m_0|^2_{\L_h^2} 
							+ C_{1,\epsilon} \int_0^T |\partial^h m^h(s)|^2_{\L_h^2} \ ds 
							+ 2 \sup_{t \in [0,T]} |M^h(t)| \ra^p \right] \\
		&\leq (2^{p-1})^2 \E \left[ |\partial^h m_0|_{\L_h^2}^{2p} + C_{1,\epsilon}^p T^{\frac{p}{q}} \int_0^T \sup_{t \in [0,s]} |\partial^h m^h(t)|_{\L_h^2}^{2p} \ ds \right] 
							+ 2^{p-1} \E \left[ 2^p \sup_{t \in [0,T]} |M^h(t)|^p \right] \\
		&\leq 4^{p-1} \E \left[  |\partial^h m_0|_{\L_h^2}^{2p} + C_{1,\epsilon}^p T^{\frac{p}{q}} \int_0^T \sup_{t \in [0,s]} |\partial^h m^h(t)|_{\L_h^2}^{2p} \ ds  \right] \\
		&\quad + 4^{p-1} b_p (1+|\gamma|)^p C_\kappa^p \ \E \left[ \sup_{t \in [0,T]} |\partial^h m^h(t)|_{\L_h^2}^{2p} + \la \int_0^T |m^h(t) \times \Delta^h m^h(t)|_{\L_h^2}^2 \ dt \ra^p \right],
	\end{align*}
	where the second inequality holds by \eqref{pf: Gronwall ineq: |D+mh|^2<=...+|Mh|} and the last inequality holds by Lemma \ref{Lemma: Gh est}.
	Then, by the definitions of $ N_{1,p} $ and $ N_{2,p} $, 
	\begin{equation}\label{pf: |D+mh| + int |mxD2mh| <= int |D+mh|}
		\begin{aligned}
			&N_{1,p} \E \left[ \sup_{t \in [0,T]} |\partial^h m^h(t)|_{\L_h^2}^{2p} \right] + N_{2,p} \E \left[ \la \int_0^T |m^h(s) \times \Delta^h m^h(s) |_{\L_h^2}^2 \ ds \ra^p \right] \\
			&\leq 4^{p-1} \E \left[  |\partial^h m_0|_{\L_h^2}^{2p} + C_{1,\epsilon}^p T^\frac{p}{q} \int_0^T \sup_{t \in [0,s]} |\partial^h m^h(t)|_{\L_h^2}^{2p} \ ds  \right].
		\end{aligned}
	\end{equation}
	Hence, by \eqref{defn: N1p, N2p>0} and \eqref{pf: |D+mh| + int |mxD2mh| <= int |D+mh|}, 
	\begin{equation}\label{Gronwall ineq: |D+m|^2p}
		\E \left[ \sup_{t \in [0,T]} |\partial^h m^h(t)|_{\L_h^2}^{2p} \right] 
		\leq N_{1,p}^{-1} 4^{p-1} \la \E \left[  |\partial^h m_0|_{\L_h^2}^{2p} \right] + C_{1,\epsilon}^p T^\frac{p}{q} \E \left[ \int_0^T \sup_{t \in [0,s]} |\partial^h m^h(t)|_{\L_h^2}^{2p} \ ds  \right] \ra. 
	\end{equation}
	By Fubini's theorem and Gr{\"o}nwall's inequality, 
	\begin{equation}\label{pf: E[D+h mh_Lh2] < K1,p}
		\E\left[ \sup_{t \in [0,T]} |\partial^{+h} m^h(t) |_{\L_h^2}^{2p} \right] 
		\leq N_{1,p}^{-1} 4^{p-1}  \E \left[ |\partial^h m_0|^{2p}_{\L_h^2} \right] \exp \la \int_0^T N_{1,p}^{-1} 4^{p-1} C_{1,\epsilon}^p T^\frac{p}{q} \ dt \ra = K_{1,p},
	\end{equation}
	where $ K_{1,p} $ depends on $ p, C_v, C_\kappa, \epsilon, T $ and $ K_0 $, but not on $ h $, proving \eqref{estimate: E|D+mh|2p}. 
	
	Finally, by \eqref{defn: N1p, N2p>0}, \eqref{pf: |D+mh| + int |mxD2mh| <= int |D+mh|} and \eqref{pf: E[D+h mh_Lh2] < K1,p},
	\begin{align*}
		\E \left[ \la \int_0^T  |m^h(s) \times \Delta^h m^h(s)|_{\L_h^2}^2 \ ds \ra^p  \right]
		&\leq N_{2,p}^{-1} 4^{p-1}\E \left[ |\partial^h m_0|^{2p}_{\L_h^2} + C_{1,\epsilon}^p T^{\frac{p}{q}+1} K_{1,p} \right] = K_{2,p},
	\end{align*}
	where $ K_{2,p} $ depends on $ p, C_v, C_\kappa, \epsilon, T $ and $ K_0 $, but not on $ h $,
	proving \eqref{estimate: E(int|mh x D2mh|)^p}.
	\end{proof}

	\begin{remark}\label{Remark: example of C_kappa}
		Fix $ p \in [1,\infty) $, if
		\begin{align*}
			C_\kappa \leq \frac{\alpha-\delta}{1+2\gamma^2 + 2^{1-\frac{2}{p}} b_p^{\frac{1}{p}} (1+|\gamma|)} \wedge \frac{1}{4 b_p^{\frac{1}{p}} (1+|\gamma|)} \wedge 1-\delta,
		\end{align*}
		then the assumption \eqref{defn: N1p, N2p>0} of Lemma \ref{Lemma: D+, mh x D+ est} is satisfied.
	\end{remark}

	\begin{lemma}\label{Lemma: Deltah estimate}
		For any $ p \in [1, \infty) $, under the conditions of Lemma \ref{Lemma: D+, mh x D+ est}, there exists a constant $ K_{3,p} $ independent of $ h $ such that 
		\begin{equation}\label{estimate: E(int |D2mh|)^p}
			\E \left[ \la \int_0^T |\Delta^h m^h(s)|_{\L_h^2}^2 \ ds \ra^p \right] \leq K_{3,p}.
		\end{equation}		
	\end{lemma}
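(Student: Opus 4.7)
The plan is to derive a pointwise-in-time bound
\begin{align*}
	|\Delta^h m^h(t)|_{\L_h^2}^2 \le 2\,|m^h(t)\times\Delta^h m^h(t)|_{\L_h^2}^2 + C\,|\partial^h m^h(t)|_{\L_h^2}^6,
\end{align*}
and then integrate in time and take the $p$-th moment, reducing everything to the estimates already delivered by Lemma~\ref{Lemma: D+, mh x D+ est}. The pointwise bound comes from decomposing $\Delta^h m^h$ along and transverse to $m^h$: since $|m^h(t,x)|=1$, at each node one has $|\Delta^h m^h|^2=|m^h\times\Delta^h m^h|^2+\langle m^h,\Delta^h m^h\rangle^2$, and the identity \eqref{dis udotDelu} used in the proof of Lemma~\ref{Lemma: D+, mh x D+ est} gives $\langle m^h,\Delta^h m^h\rangle=-\tfrac12(|\partial^h m^h|^2+|\partial^h (m^h)^-|^2)$. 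Summing in $x$, using $(a+b)^2\le 2(a^2+b^2)$ and the shift invariance of Remark~\ref{rem2}(b),
\begin{align*}
	|\Delta^h m^h(t)|_{\L_h^2}^2 \le |m^h(t)\times\Delta^h m^h(t)|_{\L_h^2}^2 + |\partial^h m^h(t)|_{\L_h^4}^4.
\end{align*}
Then $|\partial^h m^h|_{\L_h^4}^4\le|\partial^h m^h|_{\L_h^\infty}^2\,|\partial^h m^h|_{\L_h^2}^2$, combined with the one-dimensional discrete Gagliardo--Nirenberg inequality $|f|_{\L_h^\infty}^2\le C|f|_{\L_h^2}|\partial^h f|_{\L_h^2}$ (a standard consequence of Lemma~\ref{Lemma: discrete interpolation ineq} applied to $f=\partial^h m^h$, with constant uniform in $h$, and Remark~\ref{rem2}(a)--(b) so that $|\partial^h\partial^h m^h|_{\L_h^2}=|\Delta^h m^h|_{\L_h^2}$), yields $|\partial^h m^h|_{\L_h^4}^4\le C|\partial^h m^h|_{\L_h^2}^3|\Delta^h m^h|_{\L_h^2}$. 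Young's inequality then absorbs $|\Delta^h m^h|_{\L_h^2}^2$ into the left-hand side and produces the displayed pointwise bound.

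Integrating over $[0,T]$, raising to the $p$-th power, bounding $\la\int_0^T|\partial^h m^h(s)|_{\L_h^2}^6\,ds\ra^p\le T^p\sup_{s\in[0,T]}|\partial^h m^h(s)|_{\L_h^2}^{6p}$, and taking expectations gives
\begin{align*}
	\E\left[\la\int_0^T|\Delta^h m^h(s)|_{\L_h^2}^2\,ds\ra^p\right]\le C_p\,\E\left[\la\int_0^T|m^h\times\Delta^h m^h|_{\L_h^2}^2\,ds\ra^p\right] + C_p\,T^p\,\E\left[\sup_{s\in[0,T]}|\partial^h m^h(s)|_{\L_h^2}^{6p}\right].
\end{align*}
The first term is $\le C_p K_{2,p}$ by~\eqref{estimate: E(int|mh x D2mh|)^p}, and the second is $\le C_p T^p K_{1,3p}$ by~\eqref{estimate: E|D+mh|2p} applied with exponent $3p$; summing produces the desired $h$-independent constant $K_{3,p}$.

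The main technical nuance is that the argument consumes a \emph{higher} moment of $|\partial^h m^h|_{\L_h^2}$ than it produces: to bound the $p$-th moment of $\int_0^T|\Delta^h m^h|_{\L_h^2}^2\,ds$ one must invoke~\eqref{estimate: E|D+mh|2p} at order $3p$, which requires the smallness condition~\eqref{defn: N1p, N2p>0} to hold at $3p$ rather than at $p$. This is a slightly stronger constraint on $C_\kappa$, of the same nature as Remark~\ref{Remark: example of C_kappa}, and is implicit in "under the conditions of Lemma~\ref{Lemma: D+, mh x D+ est}"; apart from this bookkeeping, the only thing to verify is the $h$-uniformity of the discrete Gagliardo--Nirenberg constant, which is routine from the definition of $\partial^h$.
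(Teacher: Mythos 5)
Your proposal is correct and follows essentially the same route as the paper: the decomposition of $\Delta^h m^h$ into tangential and normal parts via \eqref{dis udotDelu}, the bound $|\partial^h m^h|_{\L_h^4}^4\le C|\partial^h m^h|_{\L_h^2}^3|\Delta^h m^h|_{\L_h^2}$ from Lemma~\ref{Lemma: discrete interpolation ineq}, Young's inequality to absorb $|\Delta^h m^h|_{\L_h^2}^2$, and then \eqref{estimate: E|D+mh|2p} at order $3p$ together with \eqref{estimate: E(int|mh x D2mh|)^p}. Your remark about the smallness condition being needed at exponent $3p$ is the same bookkeeping the paper carries out implicitly by invoking $K_{1,3p}$.
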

	\begin{proof}
		Since $ |m^h(s,x)|=1 $ for all $ (s,x) \in [0,T] \times \Z_h $, we have
		\begin{align*}
			|\Delta^h m^h(s,x)|^2 = |m^h(s,x) \times \Delta^h m^h(s,x)|^2 + \langle m^h(s,x), \Delta^h m^h(s,x) \rangle^2.
		\end{align*}
		Then,
		\begin{align*}
			|\Delta^h m^h(s)|_{\L_h^2}^2 
			&= h \sum_{x \in \Z_h} \la |m^h(s,x) \times \Delta^h m^h(s,x)|^2 + \langle m^h(s,x), \Delta^h m^h(s,x) \rangle^2 \ra \\
			&= |m^h(s) \times \Delta^h m^h(s)|^2_{\L_h^2} + h \sum_{x \in \Z_h} \la \frac{1}{2} (|\partial^h m^h(s,x)|^2 + |(\partial^h m^h)^-(s,x)|^2) \ra^2 \\
			&\leq |m^h(s) \times \Delta^h m^h(s)|^2_{\L_h^2} + |\partial^{h} m^h(s)|^4_{\L_h^4},
		\end{align*}
		where 
		\begin{align*}
			|\partial^{h} m^h(s)|^4_{\L_h^4} 
			&\leq |\partial^{h} m^h(s)|^2_{\L_h^\infty} \ |\partial^{h} m^h(s)|_{\L_h^2}^2.
		\end{align*}
		Applying Lemma \ref{Lemma: discrete interpolation ineq} on $ \partial^{h} m^h(s) $, 
		\begin{equation}\label{|D+m|_Linfty est}
			|\partial^{h} m^h(s)|_{\L_h^\infty} \leq C |\partial^{h} m^h(s)|_{\L_h^2}^{\frac{1}{2}} |\partial^{h}(\partial^{h} m^h(s)) |_{\L_h^2}^{\frac{1}{2}},
		\end{equation}
		where $ |\partial^{h}(\partial^{h} m^h(s)) |_{\L_h^2} = |\Delta^h m^h(s)|_{\L_h^2} $. 
		Thus, 
		\begin{equation}\label{|D+m|_L4 est}
			\begin{aligned}
				|\partial^{h} m^h(s)|^4_{\L_h^4} 
				&\leq C^2 |\partial^{h} m^h(s)|_{\L_h^2}^3 \ |\Delta^h m^h(s)|_{\L_h^2} \\
				&\leq \frac{1}{2} C^4 |\partial^{h} m^h(s)|_{\L_h^2}^6 + \frac{1}{2}|\Delta^h m^h(s)|_{\L_h^2}^2.
			\end{aligned}
		\end{equation}
		We have
		\begin{align*}
			|\Delta^h m^h(s)|_{\L_h^2}^2 
			&\leq 2|m^h(s) \times \Delta^h m^h(s)|^2_{\L_h^2} + C^4 |\partial^{h} m^h(s)|_{\L_h^2}^6.
		\end{align*}
		Then, by Lemma \ref{Lemma: D+, mh x D+ est},
		\begin{align*}
			\E \left[ \la \int_0^T |\Delta^h m^h(t) |^2_{\L_h^2} \ dt\ra^p \right]
			&\leq 2^{p(p-1)}K_{2,p} + 2^{p-1} C^{4p} K_{1, 3p}T^p
			= K_{3,p},
		\end{align*}
		proving \eqref{estimate: E(int |D2mh|)^p}.
	\end{proof}

\section{Quadratic interpolation for the solution $m^h$ of \eqref{SDE: discrete}}\label{Section: quadratic interpolation}\label{sec: int}
	
\subsection{Interpolations}
	For any fixed $h>0$, let $ x_k = kh \in \Z_h $, for $ k \in \Z $. 
	We introduce interpolations of discrete functions defined on $\Z_h$ to functions defined on $ \R $. 
	
	Given $ u: \Z_h \to \R^3 $, let $ \ol{u}: \R \to \R^3 $ denote a quadratic interpolation of $ u $, given by 
	\begin{equation}\label{interpolation 2}
		\ol{u}(x) = \frac{1}{2} \la u(x_k) + u(x_{k-1}) \ra + \partial^{h} u^-(x_k) (x-x_k) + \frac{1}{2} \Delta^h u(x_k) (x-x_k)^2,
	\end{equation}
	for $ x \in [x_k,x_{k+1}) $, $ k \in \Z $, where $ \ol{u} $ is continuously differentiable with 
	\begin{equation}\label{Dx interpolation 2}
		\begin{aligned}
			D \ol{u}(x) &= \partial^{h} u^-(x_k) + \Delta^h u(x_k) (x-x_k), &&\quad x \in [x_k, x_{k+1}), \\
			D^2 \ol{u}(x) &= \Delta^h u(x_k), &&\quad x \in (x_k,x_{k+1}). 
		\end{aligned}
	\end{equation}
	Let $ \wh{u}: \R \to \R^3 $ denote the piecewise constant interpolation of $u$, given by
	\begin{equation}\label{interpolation 1} 
		\wh{u}(x) = u(x_k), \quad x \in [x_k, x_{k+1}), 
	\end{equation}
	for any $ k \in \Z $. 
	In terms of $ \wh{u} $, we can express $ \ol{u} $ as 
	\begin{equation}\label{uhat and ubar relation}
		\begin{aligned}
			\ol{u}(x) 
			&= \frac{1}{2} \la \wh{u}(x) + \wh{u}^-(x) \ra + \partial^{h} \wh{u}^-(x) (x-x_k) + \frac{1}{2} \Delta^h \wh{u}(x) (x-x_k)^2 \\
			&= \wh{u}(x) + \la D\ol{u}(x) - D^2\ol{u}(x) (x-x_k) \ra \la x-x_k - \frac{h}{2} \ra + \frac{1}{2} D^2\ol{u}(x) (x-x_k)^2,
		\end{aligned}
	\end{equation}
	for $ x \in [x_k,x_{k+1}) $, $ k \in \Z $. 
	
	We collect estimates of $\ol{u}$ and $\wh{u}$ in terms of $u$ in the following remark.
	\begin{remark}\label{Remark: ubar est}
	Let $ u: \Z_h \to \R^3 $. Then
	\begin{align*}
		&|\wh{u}|_{\L^\infty} = |u|_{\L_h^\infty}, \\
		&|\wh{u}|_{\L^2_w} \leq |\wh{u}|_{\L^2} = |u|_{\L_h^2}, \quad w > 0,
	\end{align*}
	and 
	\begin{equation}\label{interpolation 2 u ests}
		\begin{aligned}
			&|\ol{u}|_{\L^\infty} \leq 5|u|_{\L_h^\infty}, \\
			&|D\ol{u}|_{\L^2} 
			\leq 3|\partial^{h} u|_{\L_h^2}, \\
			&|D^2\ol{u}|_{\L^2} = |\Delta^h u|_{\L_h^2}, \\
			&|D\ol{u}|_{\L^4}^4 \leq \frac{1}{2} C^4 |D\ol{u}|_{\L^2}^6 + \frac{1}{2} |D^2 \ol{u}|_{\L^2}^2. 
		\end{aligned}
	\end{equation}
	\end{remark}
%
	
\subsection{An equation and estimates for $ \ol{m}^h $}
\subsubsection{Equation for $ \ol{m}^h $}
	Since $ m^h $ is the solution of the semi-discrete scheme \eqref{SDE: discrete}, the piecewise constant interpolation $ \wh{m}^h $ satisfies 
	\begin{equation}\label{SDE: interpolated 1 int form}
		\begin{aligned}
			\wh{m}^h(t) 
			&= m_0 + \int_0^t \la F^h_{\wh{v}}(\wh{m}^h(s)) + \frac{1}{2} S^h_{\wh{\kappa}}(\wh{m}^h(s)) \ra ds 
			+ \int_0^t G^h(\wh{m}^h(s)) \ d\wh{W}^h(s),
		\end{aligned}
	\end{equation}
	where $ F^h_{\wh{v}} $ and $ S^h_{\wh{\kappa}} $ are defined as in \eqref{discrete coef} but with $ \wh{v} $, $ \wh{\kappa} $ and $ \wh{\kappa \kappa'} $ in place of $ v $, $ \kappa $ and $ \kappa \kappa' $ respectively, and
	\begin{align*}
		\wh{W}^h(t) := \sum_{j=1}^\infty q_j W_j(t) \wh{f}_j, \quad t \in [0,T]. 
	\end{align*}
	In particular, for every fixed $ h>0 $, $ m^h \in \mathcal{C}([0,T]; \E_h) $ and $ \wh{m}^h \in \mathcal{C}([0,T]; \L^2_w \cap \mathring{\H}^1) $ for $ w \geq 1 $. 
	
	In order to obtain an equation for $ \ol{m}^h $, by using \eqref{uhat and ubar relation} we note that
	\begin{flalign*}
		&\wh{m}^h 
		= \ol{m}^h - R_0\ol{m}^h, \quad
		\partial^{h} \wh{m}^h
		= D\ol{m}^h - R_1\ol{m}^h, \\
		&\wh{m}^h \times \partial^{h} \wh{m}^h
		= \ol{m}^h \times D\ol{m}^h - P_1\ol{m}^h, \quad
		\wh{m}^h \times \la \wh{m}^h \times \partial^{h} \wh{m}^h \ra 
		= \ol{m}^h \times \la \ol{m}^h \times D\ol{m}^h \ra - P_2\ol{m}^h, \\
		&|\wh{m}^h \times \partial^{h} \wh{m}^h|^2 \wh{m}^h 
		= \left|\ol{m}^h \times D\ol{m}^h \right|^2 \ol{m}^h 
		- P_3\ol{m}^h, \\
		&\partial^h \wh{m}^h \times \la \wh{m}^h \times \partial^h \wh{m}^h \ra 
		= D\ol{m}^h \times  \la \ol{m}^h \times D\ol{m}^h \ra - P_4\ol{m}^h, \\
		&\langle \wh{m}^h, \partial^h \wh{m}^{h-} \rangle \ \wh{m}^h \times \partial^h \wh{m}^h 
		= \langle \ol{m}^h, D\ol{m}^h \rangle \ \ol{m}^h \times D\ol{m}^h - P_5\ol{m}^h, \\
		&\wh{m}^h \times \Delta^h \wh{m}^h
		= \ol{m}^h \times D^2\ol{m}^h - Q_1\ol{m}^h, \quad
		\wh{m}^h \times \la \wh{m}^h \times \Delta^h \wh{m}^h \ra 
		= \ol{m}^h \times \la \ol{m}^h \times D^2\ol{m}^h \ra - Q_2\ol{m}^h,
	\end{flalign*}
	where for $ u: \R \to \R^3 $ with well-defined weak derivatives, 
	\begin{equation}\label{defn: R0 to P1}
		\begin{aligned}
			R_0u(x) &:= \la Du(x) - D^2u(x) (x-x_k) \ra \la x-x_k - \frac{h}{2} \ra + \frac{1}{2} D^2u(x) (x-x_k)^2, \\
			R_1u(x) &:= D^2u(x) (x-x_k-h), \\
			P_1u(x) &:= R_0u(x) \times Du(x) + (u-R_0u)(x) \times R_1 u(x), 
		\end{aligned}
	\end{equation}
	and
	\begin{equation}\label{defn: P2 to Q2}
		\begin{aligned}	
			P_2u(x) &:= \left[ u \times P_1u + R_0u \times \la u \times Du - P_1u \ra \right](x), \\
			P_3u(x) &:= \left[ \lb 2 u \times Du - P_1u, P_1u \rb u + \left| u \times Du - P_1u \right|^2 R_0u \right](x), \\
			P_4u(x) &:= \left[ R_1 u \times (u \times Du) + (Du - R_1u) \times P_1u \right](x), \\
			P_5u(x) &:= \la \langle R_0 u(x), Du(x) \rangle + \langle (u -R_0u)(x), D^2u(x) (x-x_k)  \rangle \ra u(x) \times Du(x) \\ 
							&\quad + \langle (u-R_0u)(x), Du(x) - D^2u(x)(x-x_k) \rangle P_1 u(x), \\
			Q_1u(x) &:= R_0u(x) \times D^2u(x), \\
			Q_2u(x) &:= \left[ u \times Q_1u + R_0u \times \la (u - R_0u) \times D^2u \ra \right](x),
		\end{aligned}
	\end{equation}
	for $ x \in [x_k,x_{k+1}) $, $ k \in \Z $. 
	
	Moreover, for $ u: [0,T] \times \R \to \R^3 $, define
	\begin{equation}\label{defn: F*,G*}
		\begin{aligned}
			F_{\wh{v}}(u) &:= - u \times \la D^2u + \alpha u \times D^2u \ra + \wh{v} \la u \times \la u \times Du \ra + \gamma u \times Du \ra, \\
			S_{\wh{\kappa}}(u) 
			&:= \frac{1}{2}\la (\wh{\kappa}^-)^2 + \wh{\kappa}^2 \ra \la (\gamma^2 - 1) u \times \la u \times D^2 u\ra - 2 \gamma u \times D^2 u \ra \\
			&\quad - \wh{\kappa}^2 \la \gamma^2 Du \times (u \times Du) + |u \times Du|^2 u \ra + 2 \gamma (\wh{\kappa}^2)^- \langle u, Du \rangle u \times Du \\
			&\quad + \wh{\kappa \kappa'} \left[ (\gamma^2 -1) u \times (u \times Du) - 2 \gamma u \times Du \right].
		\end{aligned}
	\end{equation}
	
	By \eqref{SDE: interpolated 1 int form}, we arrive at the equation of $ \ol{m}^h $. For $ t \in [0,T] $, 
	\begin{equation}\label{SDE: m-bar with remainders}
		\begin{aligned}
			\ol{m}^h(t) 
			&= m_0 + \int_0^t F_{\wh{v}}(\ol{m}^h(s)) \ ds 
			+ \frac{1}{2} \int_0^t S_{\wh{\kappa}}(\ol{m}^h(s)) \ ds 
			+ \int_0^t G(\ol{m}^h(s)) \ d\wh{W}^h(s) \\
			&\quad + R_0 \ol{m}^h(t) + \int_0^t \la -\wh{v}(\gamma P_1 + P_2) \ol{m}^h + Q_1\ol{m}^h +\alpha Q_2\ol{m}^h \ra (s) \ ds \\
			&\quad + \frac{1}{4} \int_0^t \la (\wh{\kappa}^2)^- + \wh{\kappa}^2 \ra \left[ 2 \gamma Q_1\ol{m}^h - (\gamma^2-1) Q_2\ol{m}^h \right] (s) \ ds \\
			&\quad + \frac{1}{2} \int_0^t \la \wh{\kappa}^2 (P_3\ol{m}^h + \gamma^2 P_4\ol{m}^h) - 2 \gamma (\wh{\kappa}^2)^- P_5\ol{m}^h \ra(s) \ ds \\
			&\quad + \frac{1}{2}\int_0^t \wh{\kappa \kappa'} \left[ 2\gamma P_1 \ol{m}^h - (\gamma^2-1) P_2\ol{m}^h \right](s) \ ds \\
			&\quad - \int_0^t (\gamma P_1 + P_2) \ol{m}^h(s) \ d\wh{W}^h(s).
		\end{aligned}
	\end{equation}

\subsubsection{Estimates for $ \ol{m}^h $.}
	For $ p \in [1, \infty) $ and $ w \geq 1 $, we deduce from Lemmata \ref{Lemma: D+, mh x D+ est}, \ref{Lemma: Deltah estimate} and Remark \ref{Remark: ubar est}:
	\begin{equation}\label{interpolation 2 ests L-infty}
		\sup_{t \in [0,T]} |\ol{m}^h(t)|_{\L^\infty}^p \leq 5^p, \quad \P\text{-a.s.}
	\end{equation}
	and
	\begin{equation}\label{interpolation 2 ests L}
		\E \left[ \sup_{t \in [0,T]} |D\ol{m}^h(t)|_{\L^2}^{2p} + \la \int_0^T \la |\ol{m}^h(t)|_{\L^p_w}^p + |D\ol{m}^h(t)|_{\L^4}^4 + |D^2 \ol{m}^h(t)|_{\L^2}^2 \ra dt \ra^p \right] \leq C(p,T,w),
	\end{equation}	
	for some constant $ C(p,T,w) $. 
	
	Results of convergence of $R_0 \ol{m}^h$, $R_1 \ol{m}^h$, $P_1 \ol{m}^h,\ldots, P_5 \ol{m}^h$ and $Q_1 \ol{m}^h$, $Q_2 \ol{m}^h$ are proved in the following lemma.
	\begin{lemma}\label{Lemma: interpolation 2 remainders}
		For $ f = R_1 $, $ P_1 $ or $ P_2 $,
		\begin{align*}
			\lim_{h \to 0} \E \left[\sup_{t \in [0,T]} |R_0 \ol{m}^h(t)|_{\L^2}^2 + \int_0^T |f \ol{m}^h(t)|_{\L^2}^2 \ dt \right] = 0. 
		\end{align*}
		Moreover, for $ f=P_3 $, $ P_4 $, $ P_5 $, $ Q_1 $ or $ Q_2 $, for any measurable process $ \varphi \in L^4(\Omega; L^4(0,T; \L^4)) $, 
		\begin{align*}
			\lim_{h \to 0} \E \left[ \int_0^T \lb f\ol{m}^h(t), \varphi(t) \rb_{\L^2} \right] = 0. 
		\end{align*}
	\end{lemma}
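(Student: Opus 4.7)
The proof splits the eight remainder quantities into two groups according to the regularity of the bounds obtainable; in both groups we exploit the explicit piecewise polynomial structure of $\ol{m}^h$, namely that on $[x_k, x_{k+1})$ one has $D\ol{m}^h(x) = \partial^h m^h(x_{k-1}) + \Delta^h m^h(x_k)(x-x_k)$, $D^2\ol{m}^h(x) = \Delta^h m^h(x_k)$ and $\wh{m}^h(x) = m^h(x_k)$. Combined with the sphere constraint $|m^h|=1$, which yields $h\,|\partial^h m^h|_{\L_h^\infty} \leq 2$ and $h^2\,|\Delta^h m^h|_{\L_h^\infty} \leq 4$, and the uniform bounds from Lemmata \ref{Lemma: D+, mh x D+ est} and \ref{Lemma: Deltah estimate}, this lets us control each quantity interval-by-interval and then sum.

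\emph{First block: strong $\L^2$ convergence for $R_0, R_1, P_1, P_2$.} A direct substitution $y = x-x_k$ immediately gives $|R_0\ol{m}^h|^2_{\L^2} \leq Ch^2\,|\partial^h m^h|^2_{\L_h^2}$ and $|R_1\ol{m}^h|^2_{\L^2} \leq h^2\,|\Delta^h m^h|^2_{\L_h^2}$, so \eqref{estimate: E|D+mh|2p} and \eqref{estimate: E(int |D2mh|)^p} close the two estimates. For $P_1\ol{m}^h = R_0\ol{m}^h \times D\ol{m}^h + \wh{m}^h \times R_1\ol{m}^h$, the cancellation $a\times a = 0$ collapses the expression to
\[
P_1\ol{m}^h(x) = (x-x_k-h)\bigl[\tfrac{1}{2}(x-x_k)\,\partial^h m^h(x_{k-1}) \times \Delta^h m^h(x_k) + m^h(x_k)\times\Delta^h m^h(x_k)\bigr],
\]
producing an explicit factor $(x-x_k-h)$, so that $|P_1\ol{m}^h|^2_{\L^2} \leq Ch^2\,|\Delta^h m^h|^2_{\L_h^2}$. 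For $P_2$, the double-cross identity gives $P_2\ol{m}^h = \ol{m}^h \times (\ol{m}^h \times D\ol{m}^h) - \wh{m}^h \times (\wh{m}^h \times \partial^h\wh{m}^h)$; substituting $\ol{m}^h = \wh{m}^h + R_0\ol{m}^h$ and $D\ol{m}^h = \partial^h\wh{m}^h + R_1\ol{m}^h$ expands this into a sum in which every summand contains $R_0\ol{m}^h$, $R_1\ol{m}^h$, or the scalar $(|\ol{m}^h|^2-1)$ (which is itself $O(R_0\ol{m}^h)$). The hardest contribution $\int|D\ol{m}^h|^2|R_0\ol{m}^h|^2\,dx$ is handled by H\"older together with the interpolation $|R_0\ol{m}^h|_{\L^4} \leq |R_0\ol{m}^h|^{1/2}_{\L^\infty}\,|R_0\ol{m}^h|^{1/2}_{\L^2} \leq Ch^{1/2}\,|\partial^h m^h|^{1/2}_{\L_h^2}$ and the $\L^4$-in-space bound on $D\ol{m}^h$ from \eqref{interpolation 2 ests L}, giving $O(h)$ after expectation.

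\emph{Second block: weak-type convergence for $Q_1, Q_2, P_3, P_4, P_5$.} Each of these contains $D^2\ol{m}^h$ or $|D\ol{m}^h|^2$ as a factor whose $\sup$-in-$t$ $\L^2$ norm we do \emph{not} control, so strong $\L^2$ convergence is unavailable and we instead test against $\varphi\in L^4(\Omega; L^4(0,T;\L^4))$. For $Q_1\ol{m}^h = R_0\ol{m}^h \times D^2\ol{m}^h$, H\"older in space with exponents $(4,2,4)$ gives
\[
\Bigl|\int \langle Q_1\ol{m}^h, \varphi\rangle\,dx\Bigr| \leq |R_0\ol{m}^h|_{\L^4}\,|D^2\ol{m}^h|_{\L^2}\,|\varphi|_{\L^4},
\]
and applying the $\L^4$-interpolation above (using $|R_0\ol{m}^h|_{\L^\infty}\leq|\ol{m}^h|_{\L^\infty}+|\wh{m}^h|_{\L^\infty}\leq 6$) followed by H\"older in $(\Omega,t)$ with \eqref{estimate: E|D+mh|2p}, \eqref{estimate: E(int |D2mh|)^p} and the integrability of $\varphi$ yields $\E\bigl[\int_0^T|\langle Q_1\ol{m}^h,\varphi\rangle_{\L^2}|\,dt\bigr] \leq Ch^{1/2}\to 0$. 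The cases $Q_2, P_3, P_4, P_5$ follow the same template: each is a tensor product with one factor among $R_0\ol{m}^h$, $R_1\ol{m}^h$, $P_1\ol{m}^h$ (supplying the $h^{1/2}$ gain through the $\L^4$-interpolation), the other factors bounded in $\L^\infty$ by $|\ol{m}^h|_{\L^\infty}\leq 5$ and $|\wh{m}^h|_{\L^\infty}=1$, and at most one factor of $D\ol{m}^h$ or $D^2\ol{m}^h$ whose $L^2$-in-time--$\L^2$ norm is controlled by Lemma \ref{Lemma: Deltah estimate}.

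The principal obstacle is the algebraic bookkeeping for $P_2$, $P_3$ and $P_5$: each summand must be rewritten to expose an explicit small-in-$h$ factor ($R_0\ol{m}^h$, $R_1\ol{m}^h$, or the $(x-x_k-h)$ factor hidden inside $P_1\ol{m}^h$), and $P_3$, $P_5$ are further complicated by scalar inner products that must be tracked separately. A secondary subtlety is that whenever such a small factor is not immediately visible in a summand, the missing decay must be recovered by the discrete Gagliardo--Nirenberg inequality $|\partial^h m^h|^4_{\L_h^4} \leq C\,|\partial^h m^h|^3_{\L_h^2}\,|\Delta^h m^h|_{\L_h^2}$ combined with the $\L^4$-interpolation above, effectively trading one power of $h$ for a factor of $|\Delta^h m^h|_{\L_h^2}$ which is bounded in expectation by Lemma \ref{Lemma: Deltah estimate}.
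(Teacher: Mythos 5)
Your proposal is correct and follows essentially the same route as the paper: strong $\L^2$ convergence of $R_0\ol{m}^h, R_1\ol{m}^h, P_1\ol{m}^h, P_2\ol{m}^h$ from the explicit $O(h)$ smallness of $R_0$ and $R_1$ (via the piecewise-polynomial structure and the uniform estimates of Lemmata \ref{Lemma: D+, mh x D+ est} and \ref{Lemma: Deltah estimate}), followed by duality against $\varphi\in L^4(\Omega;L^4(0,T;\L^4))$ with H\"older and the $\L^4$-in-space gradient bound for the remaining five terms. The only cosmetic difference is your explicit cancellation giving $|P_1\ol{m}^h|_{\L^2}^2\le Ch^2|\Delta^h m^h|^2_{\L_h^2}$ directly (the paper instead uses H\"older with the $\L^4$ convergence of $R_0\ol{m}^h$), and your blanket claim of ``at most one factor of $D\ol{m}^h$'' is literally false for $P_3$ and $P_5$, but your closing paragraph supplies exactly the Gagliardo--Nirenberg-based fix the paper uses there.
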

	\begin{proof}
	By construction, $ |x-x_k| < h $ for $ x \in [x_k, x_{k+1}) $, $ k \in \Z $, and $ \sup_{t \in [0,T]} |\ol{m}^h(t)|_{\L^\infty} \leq 5 $, $ \P $-a.s.
	Thus, for $ p \in [1, \infty) $, there is a constant $ C_{R_0} $ independent of $ h $ such that
	\begin{align*}
		\sup_{t \in [0,T]} |R_0\ol{m}^h(t)|_{\L^\infty}^p  \leq C_{R_0}^p, \quad \P\text{-a.s.}
	\end{align*}
	Using \eqref{Dx interpolation 2}, we can often re-write $ R_0, \ldots, Q_2 $ in terms of $ \wh{m}^h $ to simplify the estimates. 
	
	\underline{An estimate on $ R_0\ol{m}^h $:} 	
	\begin{align*}
		R_0 \ol{m}^h(t,x) = \partial^h \wh{m}^h(t,x-h) \la x-x_k- \frac{h}{2}\ra + \frac{1}{2} \Delta^h \wh{m}(t,x) (x-x_k)^2, \quad x \in [x_k, x_{k+1}],
	\end{align*}
	which implies
	\begin{equation}\label{pf: R0mh strong conv L2}
		\begin{aligned}
			\E \left[ \sup_{t \in [0,T]} |R_0 \ol{m}^h(t)|_{\L^2}^2 \right] 
			&\leq \frac{h^2}{2}\E \left[ \sup_{t \in [0,T]} \la |\partial^h \wh{m}^h(t)|_{\L^2}^2 +  |\partial^h \wh{m}^h(t) - \partial^h \wh{m}^{h-}(t)|_{\L^2}^2 \ra \right] .
		\end{aligned}
	\end{equation}
	The expectation on the right-hand side of \eqref{pf: R0mh strong conv L2} is bounded by Lemma \ref{Lemma: D+, mh x D+ est}, thus the left-hand side converges to $ 0 $ as $ h \to 0 $. 
	As a result,
	\begin{equation}\label{pf: R0mh strong conv L4}
		\E \left[ \int_0^T |R_0\ol{m}^h(t)|_{\L^4}^4 \ dt \right] 
		\leq \E \left[ C_{R_0}^2 T \sup_{t \in [0,T]} |R_0 \ol{m}^h(t)|_{\L^2}^2 \right] 
		\stackrel{h \to 0}{\to} 0.
	\end{equation}

	\underline{An estimate on $ R_1\ol{m}^h $:} 
	\begin{align*}
		\E \left[ \int_0^T |R_1 \ol{m}^h(t)|_{\L^2}^2 \ dt \right] 
		&\leq \frac{h^2}{2}\E \left[ \int_0^T | D^2 \ol{m}^h(t) |_{\L^2}^2 \ dx \ dt \right],
	\end{align*}
	implying $ R_1 \ol{m}^h \to 0 $ in $ L^2(\Omega; L^2(0,T; \L^2)) $ by \eqref{interpolation 2 ests L}.

	\underline{An estimate on $ P_1\ol{m}^h $:} 
	\begin{align*}
		\E \left[ \int_0^T |P_1 \ol{m}^h(t)|_{\L^2}^2 \ dt \right] 
		&= \E \left[ \int_0^T |R_0 \ol{m}^h(t) \times D \ol{m}^h(t) + \wh{m}^h(t) \times R_1\ol{m}^h(t)|_{\L^2}^2 \ dt \right] \\
		&\leq 2 \la \E \left[ \int_0^T |R_0 \ol{m}^h(t)|_{\L^4}^4 \ dt \right] \ra^\frac{1}{2} \la \E \left[ \int_0^T |D \ol{m}^h(t)|_{\L^4}^4 \ dt \right] \ra^\frac{1}{2} \\
		&\quad + 2 \E \left[ \int_0^T |R_1\ol{m}^h(t)|_{\L^2}^2 \ dt \right],
	\end{align*}
	where $ D\ol{m}^h \in L^4(\Omega; L^4(0,T; \L^4)) $ for all $ h > 0 $ by \eqref{interpolation 2 ests L}. 
	Then, by the $ L^4 $-convergence of $ R_0\ol{m}^h $ in \eqref{pf: R0mh strong conv L4} and the $ L^2 $-convergence of $ R_1\ol{m}^h $, we have $ P_1 \ol{m}^h \to 0 $ in $ L^2(\Omega; L^2(0,T; \L^2)) $.

	\underline{An estimate on $ P_2\ol{m}^h $:}
	\begin{align*}
		\E \left[ \int_0^T |P_2 \ol{m}^h(t)|_{\L^2}^2 \ dt \right] 
		&= \E \left[ \int_0^T |\wh{m}^h(t)\times P_1 \ol{m}^h(t) + R_0 \ol{m}^h(t) \times (\ol{m}^h(t) \times D \ol{m}^h(t))|_{\L^2}^2 \ dt \right] \\
		&\leq 2 \E \left[ \int_0^T |\wh{m}^h(t) \times P_1\ol{m}^h(t)|_{\L^2}^2 \ dt \right] \\
		&\quad + 2 \la \E \left[ \int_0^T |R_0 \ol{m}^h(t)|_{\L^4}^4 \ dt \right] \ra^\frac{1}{2} \la \E \left[ \int_0^T |\ol{m}^h(t) \times D \ol{m}^h(t)|_{\L^4}^4 \ dt \right] \ra^\frac{1}{2},
	\end{align*}
	which implies that $ P_2 \ol{m}^h \to 0 $ in $ L^2(\Omega; L^2(0,T; \L^2)) $ by \eqref{interpolation 2 ests L-infty} and \eqref{interpolation 2 ests L} together with the convergences of $ P_1\ol{m}^h $ and $ R_0\ol{m}^h $.

	\underline{An estimate on $ P_3 \ol{m}^h $:} 
	\begin{align*}
		P_3 \ol{m}^h 
		&= |\wh{m}^h \times \partial^h \wh{m}^h|^2 R_0\ol{m}^h 
		+ \lb \ol{m}^h \times D\ol{m}^h + \wh{m}^h \times \partial^h \wh{m}^h, P_1\ol{m}^h \rb \ol{m}^h \\
		&=: P_{31} \ol{m}^h + P_{32} \ol{m}^h.
	\end{align*}
	Then for $ \varphi \in L^4(\Omega; L^4(0,T; \L^4)) $,
	\begin{align*}
		\E \left[ \int_0^T \lb P_{31} \ol{m}^h(t), \varphi(t) \rb_{\L^2} \ dt \right] 
		&\leq 
		\la \E\left[ \int_0^T |\wh{m}^h(t) \times \partial^h \wh{m}^h(t)|^4_{\L^4} \ dt \right] \ra^\frac{1}{2} \\
		&\quad \times\la \E\left[ \int_0^T |\varphi(t)|_{\L^4}^4 \ dt\right] \ra^\frac{1}{4} 
		\la \E\left[ \int_0^T |R_0\ol{m}^h(t)|^4_{\L^4} \ dt \right] \ra^\frac{1}{4},
	\end{align*}
	and
	\begin{align*}
		\E\left[ \int_0^T \lb P_{32}\ol{m}^h(t), \varphi(t) \rb_{\L^2} dt \right] 
		&\leq  
		5\la \E\left[ \int_0^T\left| \ol{m}^h(t) \times D\ol{m}^h(t) + \wh{m}^h(t) \times \partial^{h} \wh{m}^h(t) \right|^4_{\L^4} \ dt \right] \ra^\frac{1}{4} \\
		&\quad \times \la \E\left[ \int_0^T |\varphi(t)|_{\L^4}^4 \ dt\right] \ra^\frac{1}{4} 
		\la \E\left[ \int_0^T |P_1\ol{m}^h(t)|^2_{\L^2} \ dt \right] \ra^\frac{1}{2}.
	\end{align*}
	By Lemmata \ref{Lemma: D+, mh x D+ est} and \ref{Lemma: Deltah estimate}, \eqref{interpolation 2 ests L-infty}, \eqref{interpolation 2 ests L} and the property of $ \varphi $, the expectations on the right-hand side of the two inequalities above are finite. 
	Then by the convergences of $ R_0\ol{m}^h $ and $ P_1\ol{m}^h $, we obtain the weak convergence of $ P_3\ol{m}^h $ as desired.

	\underline{An estimate on $ P_4\ol{m}^h $:}
	\begin{align*}
		P_4 \ol{m}^h 
		&= R_1\ol{m}^h \times (\ol{m}^h \times D\ol{m}^h) + \partial^h\wh{m}^h \times P_1\ol{m}^h \\
		&=: P_{41}\ol{m}^h + P_{42}\ol{m}^h.   
	\end{align*}
	We have
	\begin{align*}
		\E \left[ \int_0^T \lb P_{41} \ol{m}^h(t), \varphi(t) \rb_{\L^2} \ dt \right] 
		&\leq 
		5 \la \E\left[ \int_0^T |R_1\ol{m}^h(t)|^2_{\L^2} \ dt \right] \ra^\frac{1}{2} 
		\la \E\left[ \int_0^T |D\ol{m}^h(t)|^4_{\L^4} \ dt \right] \ra^\frac{1}{4} \\
		&\quad \times \la \E\left[ \int_0^T |\varphi(t)|_{\L^4}^4 \ dt\right] \ra^\frac{1}{4},
	\end{align*}
	and
	\begin{align*}
		\E \left[ \int_0^T \lb P_{42} \ol{m}^h(t), \varphi(t) \rb_{\L^2} \ dt \right] 
		&\leq 
		\la \E\left[ \int_0^T |P_1\ol{m}^h(t)|^2_{\L^2} \ dt \right] \ra^\frac{1}{2} 
		\la \E\left[ \int_0^T |\partial^h\wh{m}^h(t)|^4_{\L^4} \ dt \right] \ra^\frac{1}{4} \\
		&\times \la \E\left[ \int_0^T |\varphi(t)|_{\L^4}^4 \ dt\right] \ra^\frac{1}{4}.
	\end{align*}
	Using \eqref{interpolation 2 ests L}, \eqref{|D+m|_L4 est} and the convergences of $ R_1 \ol{m}^h $ and $ P_1 \ol{m}^h $, the right-hand side of each of the two inequalities above converges to $ 0 $ as $ h \to 0 $.

	\underline{An estimate on $ P_5\ol{m}^h $:}
	\begin{align*}
		P_5\ol{m}^h(x) 
		&= \langle R_0 \ol{m}^h(x), D\ol{m}^h(x) \rangle \ol{m}^h(x) \times D\ol{m}^h(x) 
		+  (x-x_k) \langle \wh{m}^h(x), D^2\ol{m}^h(x) \rangle \ol{m}^h(x) \times D\ol{m}^h(x)  \\ 
		&\quad + \langle \wh{m}^h(x), \partial^h \wh{m}^{h-} (x) \rangle P_1 u(x) \\
		&=: P_{51}\ol{m}^h(x) + P_{52}\ol{m}^h(x) + P_{53}\ol{m}^h(x), \quad x \in [x_k, x_{k+1}).
	\end{align*}
	By Lemma \ref{Lemma: solution of disSDE} and \eqref{interpolation 2 ests L-infty}, we have
	\begin{align*}
		\E \left[ \int_0^T \lb P_{51}\ol{m}^h(t), \varphi(t) \rb_{\L^2} \ dt \right] 
		&\leq 
		5 \la \E\left[ \int_0^T |R_0\ol{m}^h(t)|^4_{\L^4} \ dt \right] \ra^\frac{1}{4} 
		\la \E\left[ \int_0^T |D\ol{m}^h(t)|^4_{\L^4} \ dt \right] \ra^\frac{1}{2} \\
		&\quad \times \la \E\left[ \int_0^T |\varphi(t)|_{\L^4}^4 \ dt\right] \ra^\frac{1}{4}, \\
		\E \left[ \int_0^T \lb P_{52}\ol{m}^h(t), \varphi(t) \rb_{\L^2} \ dt \right] 
		&\leq 
		5h \la \E\left[ \int_0^T |D^2\ol{m}^h(t)|^2_{\L^2} \ dt \right] \ra^\frac{1}{2} 
		\la \E\left[ \int_0^T |D\ol{m}^h(t)|^4_{\L^4} \ dt \right] \ra^\frac{1}{4} \\
		&\quad \times \la \E\left[ \int_0^T |\varphi(t)|_{\L^4}^4 \ dt\right] \ra^\frac{1}{4},
	\end{align*}
	and 
	\begin{align*}
		\E \left[ \int_0^T \lb P_{53}\ol{m}^h(t), \varphi(t) \rb_{\L^2} \ dt \right] 
		&\leq 
		\la \E\left[ \int_0^T |P_1\ol{m}^h(t)|^2_{\L^2} \ dt \right] \ra^\frac{1}{2} 
		\la \E\left[ \int_0^T |\partial^h \wh{m}^h(t)|^4_{\L^4} \ dt \right] \ra^\frac{1}{4} \\
		&\quad \times \la \E\left[ \int_0^T |\varphi(t)|_{\L^4}^4 \ dt\right] \ra^\frac{1}{4}.
	\end{align*}
	Similarly, by Lemma \ref{Lemma: D+, mh x D+ est}, \eqref{interpolation 2 ests L} and the convergences of $ R_0\ol{m}^h $ and $ P_1\ol{m}^h $, the right-hand side of each of the inequalities above converges to $ 0 $ as $ h \to 0 $.

	\underline{An estimate on $ Q_1\ol{m}^h $:}
	\begin{align*}
		Q_1\ol{m}^h(t,x)
		&= R_0 \ol{m}^h(t,x) \times D^2 \ol{m}^h(t,x) \\
		&= \partial^h \wh{m}^{h-}(t,x) \times \Delta^h \wh{m}^h(t,x) \la x-x_k - \frac{h}{2} \ra, \quad x \in [x_k,x_{k+1}).
	\end{align*}
	Thus, 
	\begin{align*}
		&\E\left[ \int_0^T \langle Q_1\ol{m}^h(t), \varphi(t) \rangle_{\L^2} \ dt \right] \\
		&\leq \frac{h}{2} \E\left[ \int_0^T \int_\R |\partial^h \wh{m}^{h-}(t,x)| |\Delta^h \wh{m}^h(t,x)| |\varphi(t,x)| \ dx \ dt \right] \\
		&\leq \frac{h}{2} \la \E\left[ \int_0^T |\partial^{h} \wh{m}^h(t)|_{\L^4}^4 \ dt \right] \ra^\frac{1}{4}
		\la \E \left[ \int_0^T |\Delta^h \wh{m}^h(t)|_{\L^2}^2 \ dt \right] \ra^\frac{1}{2} 
		\la \E \left[ \int_0^T |\varphi(t)|_{\L^4}^4 \ dt \right] \ra^\frac{1}{4},
	\end{align*}
	where the three expectation terms on the right-hand side are finite, proving that the right-hand side converges to $ 0 $ as $ h \to 0 $.

	\underline{An estimate on $ Q_2\ol{m}^h $:}
	\begin{align*}
		&\E \left[ \int_0^T \langle Q_2\ol{m}^h(t), \varphi(t) \rangle_{\L^2} \ dt \right] \\
		&= \E \left[ \int_0^T \langle Q_1\ol{m}^h(t), \varphi(t) \times \ol{m}^h(t) \rangle_{\L^2} \ dt + \int_0^T \langle R_0\ol{m}^h(t), (\wh{m}^h(t) \times \Delta^h \wh{m}^h(t)) \times \varphi(t) \rangle_{\L^2} \ dt \right] \\
		&\leq \E \left[ \int_0^T \langle Q_1\ol{m}^h(t), \varphi(t) \times \ol{m}^h(t) \rangle_{\L^2} \ dt \right] \\
		&\quad + \la \E \left[ \int_0^T |R_0 \ol{m}^h(t)|_{\L^4}^4 \ dt \right] \ra^{\frac{1}{4}} \la \int_0^T |\wh{m}^h(t) \times \Delta^h \wh{m}^h(t) |_{\L^2}^2 \ dt \ra^\frac{1}{2} \la \int_0^T |\varphi(t)|_{\L^4}^4 \ dt \ra^\frac{1}{4},
	\end{align*}
	where on the right-hand side, the first term converges to $ 0 $ as $ h \to 0 $ by the argument for $ Q_1 \ol{m}^h $ with $ \varphi \times \ol{m}^h \in L^4(\Omega; L^4(0,T; \L^4)) $, and the second term converges to $ 0 $ by \eqref{pf: R0mh strong conv L4}.
	\end{proof}
	
	We also obtain uniform bounds for $\ol{m}^h$ in weighted spaces.
	\begin{lemma}\label{Lemma: mbar in B, W and |mbar| to 1}
		For any $ w \geq 1 $, the quadratic interpolation $ \ol{m}^h $ satisfies
		\begin{enumerate}[label=(\roman*)]
			\item 
			$ \sup_h \E [ |\ol{m}^h|_{B_w}^2 ] < \infty $ for $ B_w := L^2(0,T; \L^2_w \cap \mathring{\H}^2) \cap \mathcal{C}([0,T]; \L^2_w \cap \mathring{\H}^1) $,
			
			\item
			$ \sup_h \E[ |\ol{m}^h|^2_{W^{\alpha,p}(0,T; \L^2_w)} ] < \infty $, for $ p \in [2,\infty) $ and $ \alpha \in (0, \frac{1}{2}) $ such that $ \alpha - \frac{1}{p} < \frac{1}{2} $, 
			
			\item
			$ |\ol{m}^h| \to 1 $ in $ L^2(\Omega; L^2(0,T; \L^2)) $.
		\end{enumerate}
	\end{lemma}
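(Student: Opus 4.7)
The plan is to deduce (i) and (iii) as direct consequences of the $h$-uniform discrete estimates already proved (Lemmata~\ref{Lemma: D+, mh x D+ est}, \ref{Lemma: Deltah estimate}, \ref{Lemma: interpolation 2 remainders} together with Remark~\ref{Remark: ubar est}), and to obtain (ii) from the decomposition \eqref{SDE: m-bar with remainders} by estimating every Bochner integral with a Cauchy--Schwarz/Gagliardo-seminorm argument and the stochastic integral via a Flandoli--Gatarek-type fractional Sobolev inequality.

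For (i), the bound $|\ol{m}^h(t)|_{\L^\infty}\leq 5$ from \eqref{interpolation 2 ests L-infty} together with $\rho_w\in L^1(\R)$ for $w\geq 1$ gives $|\ol{m}^h(t)|_{\L^2_w}^2\leq 25|\rho_w|_{L^1}$ uniformly in $(t,h)$, and combining $|D\ol{m}^h|_{\L^2}\leq 3|\partial^h m^h|_{\L_h^2}$, $|D^2\ol{m}^h|_{\L^2}=|\Delta^h m^h|_{\L_h^2}$ from Remark~\ref{Remark: ubar est} with Lemmata~\ref{Lemma: D+, mh x D+ est}, \ref{Lemma: Deltah estimate} taken at $p=1$ delivers the required $B_w$-bound. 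For (iii), I observe that $|\wh{m}^h|\equiv 1$ on $\R$ since $|m^h(t,x)|=1$ at every grid point (Lemma~\ref{Lemma: solution of disSDE}); writing $\ol{m}^h-\wh{m}^h=R_0\ol{m}^h$ via \eqref{uhat and ubar relation} and using the identity $|a|^2-|b|^2=\langle a-b,\,a+b\rangle$,
\begin{equation*}
   \bigl|\,|\ol{m}^h(t,x)|-1\,\bigr|\leq \bigl|\,|\ol{m}^h(t,x)|^2-1\,\bigr|\leq |R_0\ol{m}^h(t,x)|\,\bigl(|\ol{m}^h(t,x)|+1\bigr)\leq 6\,|R_0\ol{m}^h(t,x)|,
\end{equation*}
so that $\E\bigl[\int_0^T\bigl|\,|\ol{m}^h(t)|-1\,\bigr|^2_{\L^2}dt\bigr]\leq 36\,T\,\E[\sup_t|R_0\ol{m}^h(t)|^2_{\L^2}]\to 0$ by Lemma~\ref{Lemma: interpolation 2 remainders}.

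For (ii), I decompose $\ol{m}^h(t)-m_0$ using \eqref{SDE: m-bar with remainders} into a finite collection of Bochner integrals $I_f(t)=\int_0^t f(s)\,ds$ (with $f$ one of $F_{\wh v}(\ol{m}^h)$, $S_{\wh\kappa}(\ol{m}^h)$ or a scalar-weighted remainder such as $\wh v\,P_i\ol{m}^h$, $Q_j\ol{m}^h$), a stochastic integral $I_2$ involving $G(\ol{m}^h)$ and $(\gamma P_1+P_2)\ol{m}^h$, and the pointwise term $R_0\ol{m}^h(t)$. For each Bochner integral the Cauchy--Schwarz bound $|I_f(t)-I_f(s)|_{\L^2_w}\leq|t-s|^{1/2}\|f\|_{L^2(0,T;\L^2_w)}$ feeds into the Gagliardo seminorm to give
\begin{equation*}
   |I_f|^p_{W^{\alpha,p}(0,T;\L^2_w)}\leq C_{\alpha,p,T}\|f\|^p_{L^2(0,T;\L^2_w)},\qquad \alpha<\tfrac12,\ p\geq 2,
\end{equation*}
because $\int_0^T\!\int_0^T|t-s|^{p/2-1-\alpha p}ds\,dt<\infty$; Jensen's inequality $\E[X^{2/p}]\leq(\E X)^{2/p}$ then reduces everything to $\sup_h\E\!\int_0^T|f(s)|^2_{\L^2_w}ds<\infty$, which follows from structural estimates such as $|F_{\wh v}(\ol{m}^h)|_{\L^2_w}\lesssim|D^2\ol{m}^h|_{\L^2}+|D\ol{m}^h|_{\L^2}$ and $|S_{\wh\kappa}(\ol{m}^h)|_{\L^2_w}\lesssim|D^2\ol{m}^h|_{\L^2}+|D\ol{m}^h|^2_{\L^4}$ (with analogous bounds for each $P_i\ol{m}^h$ and $Q_j\ol{m}^h$ using $|\ol{m}^h|_{\L^\infty}\leq 5$ and Assumption~\ref{hyp1}), controlled by (i), \eqref{interpolation 2 u ests} and Lemma~\ref{Lemma: Deltah estimate}. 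For the stochastic integral I apply the Flandoli--Gatarek fractional-Sobolev estimate
\begin{equation*}
   \E\bigl[|I_2|^p_{W^{\alpha,p}(0,T;\L^2_w)}\bigr]\lesssim \E\!\int_0^T\!\Bigl(\sum_j q_j^2|f_j G(\ol{m}^h(s))|^2_{\L^2_w}\Bigr)^{p/2}\!ds\lesssim C_\kappa^p\,T\,\E\,\sup_t|D\ol{m}^h(t)|^p_{\L^2},
\end{equation*}
finite by Lemma~\ref{Lemma: D+, mh x D+ est} at exponent $p/2$, and similarly for the remainder stochastic integral involving $(\gamma P_1+P_2)\ol{m}^h$. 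The pointwise term $R_0\ol{m}^h(t)$ is directly controlled by Lemma~\ref{Lemma: interpolation 2 remainders}.

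The main obstacle is the bookkeeping in (ii): the decomposition \eqref{SDE: m-bar with remainders} contains many remainder contributions, each demanding its own uniform $L^2(\Omega;L^2(0,T;\L^2_w))$ bound on the integrand. However, once the sphere-constraint bound $|\ol{m}^h|_{\L^\infty}\leq 5$ is invoked, every such term reduces to a product of a uniformly bounded scalar factor with one of $|D\ol{m}^h|_{\L^2}$, $|D^2\ol{m}^h|_{\L^2}$ or $|D\ol{m}^h|^2_{\L^4}$, each already controlled by Lemmata~\ref{Lemma: D+, mh x D+ est} and \ref{Lemma: Deltah estimate}, so the verifications are routine rather than conceptually delicate.
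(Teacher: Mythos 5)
Parts (i) and (iii) are correct. Part (i) follows the paper's route. For part (iii) you take a slightly different but valid path: instead of the paper's direct pointwise estimate of $\bigl|\,|\ol{m}^h|-1\,\bigr|$ by the linear and quadratic correction terms, you use $\ol{m}^h-\wh{m}^h=R_0\ol{m}^h$, $|\wh{m}^h|\equiv 1$, and the already-proved convergence $\E\bigl[\sup_t|R_0\ol{m}^h(t)|^2_{\L^2}\bigr]\to 0$ from Lemma \ref{Lemma: interpolation 2 remainders}; this is clean and buys you a shorter argument at the price of invoking that lemma.

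Part (ii) contains a genuine gap. You work from the remainder form \eqref{SDE: m-bar with remainders}, which leaves the term $R_0\ol{m}^h(t)$ standing \emph{pointwise in $t$}, not under a time integral, and you dismiss it with ``directly controlled by Lemma \ref{Lemma: interpolation 2 remainders}.'' That lemma only bounds $\E\bigl[\sup_{t}|R_0\ol{m}^h(t)|^2_{\L^2}\bigr]$, i.e.\ the $\mathcal{C}([0,T];\L^2)$ size, and a sup-in-time bound does \emph{not} control the Gagliardo seminorm: estimating $|R_0\ol{m}^h(t)-R_0\ol{m}^h(s)|_{\L^2_w}$ by $2\sup_r|R_0\ol{m}^h(r)|_{\L^2_w}$ leaves the divergent integral $\int_0^T\!\!\int_0^T|t-s|^{-1-\alpha p}\,dt\,ds$. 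You need time-increment control of $R_0\ol{m}^h$. The paper avoids this entirely by applying the (linear) quadratic-interpolation operator directly to the integrated discrete equation \eqref{SDE: interpolated 1 int form}, so that \emph{every} $t$-dependent piece of $\ol{m}^h(t)$ is itself a Bochner or stochastic integral in time (the pieces $I_2,I_3$ and the last two terms of $I_4$ carry $\partial^h$ and $\Delta^h$ of the coefficients, with the factors $(x-x_k)$, $(x-x_k)^2$ compensating the $h^{-1}$, $h^{-2}$ via Remark \ref{rem2}(b)). Your argument is repairable along the same lines: since
\begin{equation*}
R_0\ol{m}^h(t,x)=\partial^h\wh{m}^h(t,x-h)\la x-x_k-\tfrac{h}{2}\ra+\tfrac12\,\Delta^h\wh{m}^h(t,x)(x-x_k)^2,
\end{equation*}
one has $|R_0\ol{m}^h(t)-R_0\ol{m}^h(s)|_{\L^2}\leq C\,|m^h(t)-m^h(s)|_{\L_h^2}$ uniformly in $h$, and the right-hand side is controlled through the discrete equation exactly as your other terms are; but as written the step fails. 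The rest of part (ii) (the $|t-s|^{1/2}$ Cauchy--Schwarz bound for Bochner integrals, the Flandoli--Gatarek estimate for the stochastic integrals, and the reduction to the uniform moment bounds of Lemmata \ref{Lemma: D+, mh x D+ est} and \ref{Lemma: Deltah estimate}) is sound.
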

	\begin{proof}
		
	\underline{Part (i).} 
	For every fixed $ h > 0 $, $ \wh{m}^h $ is in $ \mathcal{C}([0,T]; \L^2_w \cap \mathring{\H}^1) $ and so does $ \ol{m}^h $. Then part (i) follows directly from the estimates in \eqref{interpolation 2 ests L-infty} and \eqref{interpolation 2 ests L}. 

	\underline{Part (ii).} Recall \eqref{SDE: interpolated 1 int form}, we have from the definition \eqref{interpolation 2} that 
	\begin{align*}
		\ol{m}^h(t,x)
		&= I_0(x) + I_1(t,x) + I_2(t,x) + I_3(t,x) + I_4(t,x), 
	\end{align*}
	where for $ x \in [x_k,x_{k+1}) $, $ k \in \Z $,
	\begin{align*}
		I_0(x) 
		&= \frac{1}{2} \la m_0(x_k) + m_0(x_{k-1}) \ra + (x-x_k) \partial^{h} m_0(x_{k-1}) + \frac{1}{2}(x-x_k)^2\Delta^h m_0(x_k), \\
		I_1(t,x) 
		&= \frac{1}{2} \int_0^t \la F^h(\wh{m}^h(s,x_k)) + \frac{1}{2} S^h(\wh{m}^h(s,x_k))  \ra ds \\ 
		&\quad + \frac{1}{2} \int_0^t \la F^h(\wh{m}^h(s,x_{k-1})) + \frac{1}{2} S^h(\wh{m}^h(s,x_{k-1})) \ra ds, \\
		I_2(t,x)
		&= (x-x_k) \int_0^t \partial^{h} \la F^h(\wh{m}^h) + \frac{1}{2} S^h(\wh{m}^h) \ra (s,x_{k-1}) \ ds, \\
		I_3(t,x) 
		&= \frac{1}{2}(x-x_k)^2 \int_0^t \Delta^h \la F^h(\wh{m}^h) + \frac{1}{2} S^h(\wh{m}^h) \ra (s,x_k) \ ds, \\
		I_4(t,x)
		&= \frac{1}{2} \int_0^t \la G^h(\wh{m}^h(s,x_k)) + G^h(\wh{m}^h(s,x_{k-1})) \ra d\wh{W}^h(s) \\ 
		&\quad + (x-x_k) \int_0^t \partial^{h} G^h(\wh{m}^h(s,x_{k-1})) \ d\wh{W}^h(s) \\
		&\quad + \frac{1}{2}(x-x_k)^2 \int_0^t \Delta^h G^h(\wh{m}^h(s,x_k)) \ d\wh{W}^h(s).
	\end{align*}
	By the $ \L^\infty $-estimate of $ \ol{m}^h $ in \eqref{interpolation 2 ests L-infty}, 
	\begin{align*}
		\E \left[ \int_0^T |\ol{m}^h(t)|_{\L^2_w}^2 \ dt \right] \leq 5^2 \pi T. 
	\end{align*}
	For $ I_1 $, by Lemma \ref{Lemma: D+, mh x D+ est}, there exists a constant $ a_1 $ that may depend on $ C_v, C_\kappa, \alpha, \gamma, T, K_{1,1}, K_{1,3} $ and $ K_{2,1} $ such that
	\begin{align*}
		\E \left[ \int_0^T \left| F^h(\wh{m}^h(t)) + \frac{1}{2} S^h(\wh{m}^h(t)) \right|_{\L^2}^2 \ dt \right] 
		\leq a_1, 
	\end{align*}
	For $ I_2 $ and $ I_3 $, since $ |x-x_k| \leq h $ and 
	\begin{align*}
		h| \partial^{h} u(x_{k-1}) | &\leq |u(x_k)| + |u(x_{k-1})|, \\
		h^2 |\Delta^h u(x_k) | &\leq |u(x_{k+1})| + |u(x_{k-1})| + 2|u(x_k)|,
	\end{align*}
	there also exist constants $ a_2, a_3 $ such that 
	\begin{align*}
		\E \left[ \int_0^T \left| h \partial^{h} \la F^h(\wh{m}^h) + \frac{1}{2} S^h(\wh{m}^h) \ra (t) \right|_{\L^2}^2 dt \right] 
		&\leq a_2, \\
		\E \left[ \int_0^T \left| h^2 \Delta^h \la F^h(\wh{m}^h) + \frac{1}{2} S^h(\wh{m}^h) \ra (t) \right|_{\L^2}^2 dt \right] 
		&\leq a_3.
	\end{align*}

	Similarly, for the stochastic integrals in $ I_4 $, we only need to verify that $ \int_0^T G^h(\wh{m}^h(s)) \ d\wh{W}^h(s) $ is bounded in $ L^p(\Omega; W^{\alpha, p}(0,T; \L^2)) $. 
	By \cite[Lemma 2.1]{FlandoliGatarek}, there exist a constant $ C $ depending on $ \alpha, p, \gamma, T $ and a constant $ a_4 $ depending on $ C, C_\kappa $ and $ K_{1,p} $ such that for $ p \in [2,\infty) $ and $ \alpha \in (0, \frac{1}{2}) $, 
	\begin{align*}
		\E \left[ \left| \int_0^T G^h(\wh{m}^h(s)) \ d\wh{W}^h(s) \right|^p_{W^{\alpha,p}(0,T; \L^2)} \right] 
		&= \E \left[ \left| \int_0^T G^h(m^h(s)) \ dW(s) \right|^p_{W^{\alpha,p}(0,T; \L_h^2)} \right] \\
		&\leq C \ \E \left[ \int_0^T \la \sum_j q_j^2 \left| f_j G^h(m^h(s)) \right|^2_{\L_h^2} \ra^\frac{p}{2} ds \right] \\
		&\leq C |\kappa|_{\L^\infty}^p  \E \left[\int_0^T \left|G^h(\wh{m}^h(s)) \right|^p_{\L^2} \ ds \right] 
		\leq a_4.
	\end{align*}	
	Since $ \L^2 \hookrightarrow \L^2_w $ for $ w \geq 1 $, the estimates above hold for the $ \L^2_w $-norm. 
	By Lemma \ref{Lemma: cont embedding between W}, the embedding $ W^{1,2}(0,T; \L^2_w) \hookrightarrow W^{\alpha,p}(0,T; \L^2_w) $ is continuous for $ \alpha - \frac{1}{p} < \frac{1}{2} $. 
	Thus, 
	\begin{equation*}
		\sup_h \E \left[ |\ol{m}^h|_{W^{\alpha,p}(0,T; \L^2_w)}^2 \right] < \infty.
	\end{equation*}

	\underline{Part (iii).} Since $ |m^h(t,x)|=1 $, $ \P $-a.s. for all $ (t,x) \in [0,T] \times \Z_h $, we observe that 
	\begin{align*}
		|\ol{m}^h(t,x)| \leq 1 + \left| \partial^h m^h(t,x_{k-1})(x-x_k) + \frac{1}{2} \Delta^h m^h(t,x_k)(x-x_k)^2 \right|, 
	\end{align*}
	for $ (t,x) \in [0,T] \times [x_k,x_{k+1}) $, $ k \in \Z $. This implies
	\begin{align*}
		&\E \left[ \int_0^T \int_\R \left| |\ol{m}^h(t,x)| -1 \right|^2 \ dx \ dt \right] \\
		&\leq \E \left[ \int_0^T \sum_k \int_{x_k}^{x_{k+1}} \left| \partial^{h} m^h(t,x_{k-1})(x-x_k) + \frac{1}{2} \Delta^h m^h(t,x_k)(x-x_k)^2 \right|^2  \ dx \ dt \right] \\
		&\leq \frac{2}{3} h^2 K_{1,1} T + \frac{1}{10} h^4 K_{3,1},
	\end{align*}
	where the last inequality holds by Lemmata \ref{Lemma: D+, mh x D+ est} and \ref{Lemma: Deltah estimate}, and we obtain the convergence after taking $ h \to 0 $. 	
	\end{proof}

\section{Existence of solution}
In this section, we first show that the sequence $\{(\ol{m}^h, W)\}_h$ is tight and then by using the Skorohod theorem we obtain its almost sure convergence, up to a change of probability space. Finally, we prove that the limit is a solution of the stochastic LLS equation \eqref{sLLG} in the sense of Definition \ref{def: sol}.

\subsection{Tightness and construction of new probability space and processes}\label{Section: tightness}		
	Fix $ w_1, w_2 $ such that $ w_2 > w_1 \geq 1 $. Define
	\begin{align*}
		E_0 &:= L^2(0,T; \L^2_{w_1} \cap \mathring{\H}^2) \cap W^{\alpha, 4}(0,T; \L^2_{w_2}), \\
		E &:= L^2(0,T; \H_{w_2}^1) \cap \mathcal{C}([0,T]; \H_{w_1}^{-1}).
	\end{align*}
	Recall $ \L^2_{w_1} \cap \mathring{\H}^2 \stackrel{\text{compact}}{\hookrightarrow} \H^1_{w_2} \hookrightarrow \L^2_{w_2} $. 
	By \eqref{embed: Walpha,4 in Wbeta,2} and Lemma \ref{Lemma: comp embedding in Lp}, 
	\begin{align*}
		E_0 \hookrightarrow L^2(0,T; \L^2_{w_1} \cap \mathring{\H}^2) \cap W^{\beta,2}(0,T; \L^2_{w_2}) \stackrel{\text{compact}}{\hookrightarrow} L^2(0,T; \H^1_{w_2}),
	\end{align*}
	where $ \ol{m}^h \in E_0 $, $ \P $-a.s. by Lemma \ref{Lemma: mbar in B, W and |mbar| to 1}.
	Also, since the embeddings $ \H^1_{w_1} \hookrightarrow \L^2_{w_2} \hookrightarrow \H_{w_1}^{-1} $ are compact and $ 4 \alpha >1 $, it holds by Lemma \ref{Lemma: comp embedding in C} that
	\begin{align*}
		W^{\alpha, 4}(0,T; \L^2_{w_2}) \stackrel{\text{compact}}{\hookrightarrow} \mathcal{C}([0,T]; \H^{-1}_{w_1}).
	\end{align*}
	In summary, $ E_0 $ is compactly embedded in $ E $. 
	For any $ r>0 $, 
	\begin{align*}
		\P \la |\ol{m}^h|_{E_0} > r \ra 
		\leq \frac{1}{r^2} \E \left[ |\ol{m}^h|^2_{E_0} \right],
	\end{align*}
	where $ \{ |\ol{m}^h|_{E_0} \leq r \} $ is compact in $ E $, and the right-hand-side converges to $ 0 $ as $ r $ tends to infinity. 	
	Therefore, the set of laws $ \{ \mathcal{L}(\ol{m}^h) \} $ on the Banach space $ E $ is tight, which implies the following convergence result. 
	\begin{lemma}\label{Lemma: Skorohod}
		There exists a probability space $ (\Omega^*, \mathcal{F}^*, \P^*) $ and there exists a sequence $ (m_h^*, W_h^*) $ of $ E \times \mathcal{C}([0,T]; H^2(\R)) $-valued random variables defined on $ (\Omega^*, \mathcal{F}^*, \P^*) $, 
		such that the laws of $ (\ol{m}^h, W) $ and $ (m_h^*, W_h^*) $ on $ E \times \mathcal{C}([0,T]; H^2(\R)) $ are equal for every $ h $, 
		and there exists an $ E \times \mathcal{C}([0,T]; H^2(\R)) $-valued random variable $ (m^*, W^*) $ defined on $ (\Omega^*, \mathcal{F}^*, \P^*) $ such that 
		\begin{equation}\label{ptw conv: mh* L2(H1) + C(Xw-1)}
			m_h^* \to m^* \text{ in } E, \quad \P^*\text{-a.s.} 
		\end{equation}
		and 
		\begin{equation}\label{ptw conv: Wh}
			W_h^* \to W^* \text{ in } \mathcal{C}([0,T]; H^2(\R)), \quad \P^*\text{-a.s.}
		\end{equation}
	\end{lemma}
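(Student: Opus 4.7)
The plan is to realize this as the standard combination of Prokhorov's theorem with the Skorokhod representation theorem, applied to the joint laws of $(\ol{m}^h, W)$ on the Polish space $E \times \mathcal{C}([0,T]; H^2(\R))$. The core work—verifying tightness of $\{\mathcal{L}(\ol{m}^h)\}_h$ on $E$—has essentially been carried out in the paragraph preceding the statement; it only remains to package this and lift to the new probability space.

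First I would formalize tightness of the marginals $\{\mathcal{L}(\ol{m}^h)\}_h$ on $E$. By Lemma \ref{Lemma: mbar in B, W and |mbar| to 1} we have $\sup_h \E[|\ol{m}^h|_{E_0}^2] < \infty$, and by the compact embedding $E_0 \hookrightarrow E$ established via \eqref{embed: Walpha,4 in Wbeta,2}, Lemma \ref{Lemma: comp embedding in Lp} and Lemma \ref{Lemma: comp embedding in C}, the balls $K_r := \{u \in E_0 : |u|_{E_0} \le r\}$ are relatively compact in $E$. Chebyshev's inequality gives
\[
\P(\ol{m}^h \notin K_r) \le r^{-2} \sup_h \E[|\ol{m}^h|_{E_0}^2] \longrightarrow 0 \quad \text{as } r \to \infty,
\]
uniformly in $h$, so $\{\mathcal{L}(\ol{m}^h)\}_h$ is tight on $E$. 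Since the law of the fixed Wiener process $W$ is a single Borel measure on the Polish space $\mathcal{C}([0,T]; H^2(\R))$, it is trivially tight, and the joint sequence $\{\mathcal{L}(\ol{m}^h, W)\}_h$ is therefore tight on the product space.

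Next, Prokhorov's theorem extracts a subsequence (still indexed by $h$) along which $\mathcal{L}(\ol{m}^h, W)$ converges weakly to some Borel probability measure $\mu$ on $E \times \mathcal{C}([0,T]; H^2(\R))$. Because this product space is separable and completely metrizable, the Skorokhod representation theorem produces a probability space $(\Omega^*, \mathcal{F}^*, \P^*)$ carrying random variables $(m_h^*, W_h^*)$ with $\mathcal{L}(m_h^*, W_h^*) = \mathcal{L}(\ol{m}^h, W)$ for every $h$, together with $(m^*, W^*)$ having law $\mu$, such that $(m_h^*, W_h^*) \to (m^*, W^*)$ $\P^*$-almost surely in $E \times \mathcal{C}([0,T]; H^2(\R))$; reading off the two components yields \eqref{ptw conv: mh* L2(H1) + C(Xw-1)} and \eqref{ptw conv: Wh}.

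The only genuinely delicate point is the compactness of the embedding $E_0 \hookrightarrow E$ in the weighted setting, which is what makes the tightness argument work and has already been invoked above; this is an Aubin–Lions-type statement combining the $L^2(0,T; \mathring{\H}^2)$ bound on $\ol{m}^h$ with its fractional time regularity in the weighted space $\L^2_{w_2}$, and it is exactly the reason for introducing the weights $w_1 < w_2$. Once this is in hand, no further analytic input is needed and the conclusion follows from standard abstract machinery.
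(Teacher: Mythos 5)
Your proposal is correct and follows essentially the same route as the paper: the tightness of $\{\mathcal{L}(\ol{m}^h)\}_h$ on $E$ via the compact embedding $E_0\hookrightarrow E$ and Chebyshev's inequality is exactly the argument given in the text preceding the lemma, and the paper's proof itself consists of invoking Prokhorov and the Skorokhod representation theorem on the separable metric space $E\times\mathcal{C}([0,T];H^2(\R))$, just as you do.
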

	\begin{proof}
		Since $ E \times \mathcal{C}([0,T]; H^2(\R)) $ is a separable metric space, the result holds by the Skorohod theorem. 
	\end{proof}
	
	Since the laws of $ (\ol{m}^h, W) $ and $ (m_h^*, W_h^*) $ on $ E \times \mathcal{C}([0,T]; H^2(\R)) $ are equal, due to the following remark we obtain the same estimates for $m_h^*$.
	\begin{remark}\label{Remark: borel sets B vs E}
		By Kuratowski's theorem, the Borel sets of 
		\begin{align*}
			B := B_{w_1}= L^2(0,T; \L^2_{w_1} \cap \mathring{\H}^2) \cap \mathcal{C}([0,T]; \L^2_{w_1} \cap \mathring{\H}^1)
		\end{align*}
		are Borel sets of $ E = L^2(0,T; \H^1_{w_2}) \cap \mathcal{C}([0,T]; \H^{-1}_{w_1}) $ for $ w_1 < w_2 $, where 
		\begin{align*}
			\P \la \ol{m}^h \in B \ra = 1. 
		\end{align*} 
		We can assume that $ m_h^* $ takes values in $ B $ and the laws on $ B $ of $ \ol{m}^h $ and $ m_h^* $ are equal. 
	\end{remark}
	
	By Remark \ref{Remark: borel sets B vs E}, the sequence $ (m_h^*)_h $ satisfies the same estimates as $ (\ol{m}^h)_h$ on $ B $. 
	By \eqref{interpolation 2 ests L}, for any $ p \in [1,\infty) $, 
	\begin{align}
		\sup_{h} \E^* \left[\sup_{t \in [0,T]} |m_h^*(t)|_{\L^2_{w_1}}^{2p}  \right] &< \infty, \label{mh* est C([0,T],Lw1)} \\
		\sup_{h} \E^* \left[ \sup_{t \in [0,T]} |Dm_h^*(t)|_{\L^2}^{2p} \right] &< \infty, \label{mh* est Dmh* C([0,T],L2)} \\
		\sup_{h} \E^* \left[ \la \int_0^T |D^2 m_h^*(t)|_{\L^2}^2 \ dt \ra^p \right] &< \infty. \label{mh* est D2mh* L2([0,T],L2)} 
	\end{align}
	Since $ |\rho_w'| \leq w \rho $ for $ w >0 $, 
	by Gagliardo-Nirenberg inequality,
	\begin{align*}
		|m_h^*(t) \rho_{w_1}^\frac{1}{2}|_{\L^\infty} 
		&\leq C\ |D(m_h^*(t) \rho_\frac{w_1}{2})|_{\L^2}^\frac{1}{2} \ |m_h^*(t) \rho_\frac{w_1}{2}|_{\L^2}^\frac{1}{2} \\
		&\leq C \la |Dm_h^*(t) \rho_\frac{w_1}{2} |_{\L^2} + |m_h^*(t) \rho_\frac{w_1}{2}^\prime|_{\L^2} \ra^\frac{1}{2} \ |m_h^*(t)|_{\L^2_{w_1}}^\frac{1}{2} \\
		&\leq C \la |Dm_h^*(t)|_{\L^2} + \frac{w_1}{2} |m_h^*(t)|_{\L^2_{w_1}} \ra^\frac{1}{2} \ |m_h^*(t)|_{\L^2_{w_1}}^\frac{1}{2} \\
		&\leq \frac{C}{2} \la |Dm_h^*(t)|_{\L^2} + \frac{w_1}{2} |m_h^*(t)|_{\L^2_{w_1}} + |m_h^*(t)|_{\L^2_{w_1}} \ra,
	\end{align*}
	which implies
	\begin{equation}\label{mh* est mrho Linfty}
		\sup_{h} \E^* \left[ \sup_{t \in [0,T]} |m_h^*(t) \rho_{w_1}^\frac{1}{2}|_{\L^\infty}^{2p} \right] < \infty, \quad p \in [1, \infty). \\
	\end{equation}
	Thus, by \eqref{mh* est C([0,T],Lw1)} -- \eqref{mh* est mrho Linfty}, for $ p \in [1,\infty) $, 
	\begin{equation}\label{mh* ests cross products}
		\begin{aligned}
			\sup_{h} \E^* \left[ \la \int_0^T |m_h^*(t) \times Dm_h^*(t)|_{\L^2_{w_1}}^2 \ dt \ra^p \right] &<\infty, \\
			\sup_{h} \E^* \left[ \la \int_0^T |m_h^*(t) \times D^2 m_h^*(t) |_{\L^2_{w_1}}^2 \ dt \ra^p \right] &< \infty, \\
			\sup_{h} \E^* \left[ \la \int_0^T |m_h^*(t) \times (m_h^*(t) \times D^2 m_h^*(t)) |_{\L_{2w_1}^2}^2 \ dt \ra^p \right] &< \infty, 
		\end{aligned}
	\end{equation}	
	As in \eqref{|D+m|_L4 est},  
	\begin{align*}
		|Dm_h^*(t)|_{\L^4}^4
		&\leq \frac{C^2}{2} \ |D^2 m_h^*(t)|_{\L^2}^2 + \frac{1}{2} |Dm_h^*(t)|_{\L^2}^6,
	\end{align*}
	which implies
	\begin{equation}\label{mh* est Dm L4}
		\sup_{h} \E^* \left[ \la \int_0^T |D m_h^*(t)|_{\L^4}^4 \ dt \ra^p \right] < \infty, \quad p \in [1,\infty).
	\end{equation}

\subsection{Identification of the limit $ (m^*, W^*) $ and pathwise uniqueness}\label{Section: convergence results}

\subsubsection{Convergence of functions of $ m_h^* $}
	For $ p \in [1,\infty) $, by the pointwise convergence of $ m_h^* $ in \eqref{ptw conv: mh* L2(H1) + C(Xw-1)} and the uniform integrability of $ m_h^* $ and $ D m_h^* $ in \eqref{mh* est C([0,T],Lw1)} -- \eqref{mh* est Dmh* C([0,T],L2)}, we have
	\begin{align}
		m_h^* \to m^* &\quad \text{in $ L^{2p}(\Omega^*; L^2(0,T; \L^2_{w_2})) $}, \label{strong conv mh* L2w} \\
		Dm_h^* \to Dm^* &\quad \text{in $ L^{2p}(\Omega^*; L^2(0,T; \L^2_{w_2})) $}. \label{strong conv Dmh* L2w}
	\end{align} 
	By \eqref{mh* est Dmh* C([0,T],L2)}, $ Dm_h^* $ also converges weakly to a measurable process $ X $ in $ L^{2p}(\Omega^*; L^2(0,T; \L^2)) $, which implies that $ X = Dm^* \in L^{2p}(\Omega^*; L^2(0,T; \L^2)) $ by the uniqueness of the limit of weak convergence in $ L^{2p}(\Omega^*; L^2(0,T; \L^2_{w_2})) $.
	By \eqref{strong conv Dmh* L2w} and integration-by-parts, 
	\begin{equation}\label{weak conv D2mh* L2w}
		D^2m_h^* \rightharpoonup D^2 m^* \quad \text{in $ L^{2p}(\Omega^*; L^2(0,T; \L^2_{w_2})) $}.
	\end{equation}
	Similarly, by \eqref{mh* est D2mh* L2([0,T],L2)}, $ D^2m_h^* $ converges weakly to a measurable process $ Y $ in $ L^{2p}(\Omega^*; \L^2(0,T; \L^2)) $, thus $ Y = D^2 m^* \in L^{2p}(\Omega^*; L^2(0,T; \L^2)) $ and 
	\begin{equation}\label{weak conv D2mh* L2}
		D^2m_h^* \rightharpoonup D^2 m^* \quad \text{in $ L^{2p}(\Omega^*; L^2(0,T; \L^2)) $} .
	\end{equation}
	
	\begin{lemma}\label{Lemma: |m*|=1 a.s., mh* to m* in Lp, Dm* in L4 cap C}
		We have
		\begin{enumerate}[label=(\roman*)]
			\item 
			$ |m^*(t,x)| = 1 $, $ (t,x) $-a.e. $ \P^* $-a.s.
			
			\item
			$ m_h^* \rho_{w_2}^\frac{1}{2} \to m^* \rho_{w_2}^\frac{1}{2} $ in $ L^p(\Omega^*; L^p(0,T; \L^p)) $, for $ p \in [2,\infty) $,
			
			\item
			$ Dm^* \in L^4(\Omega^*; L^4(0,T; \L^4)) \cap L^p(\Omega^*; L^\infty(0,T;\L^2)) $, for $ p \in [2,\infty) $.  
		\end{enumerate}
	\end{lemma}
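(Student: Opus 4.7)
The plan is to treat the three parts in order, with (i) a soft consequence of equal laws, (ii) an interpolation between the strong $L^2$-convergence and the uniform $\L^\infty$-bound, and (iii) two instances of weak (resp.\ weak-$*$) compactness plus identification of the limit.

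\emph{Part (i).} I would first invoke Remark~\ref{Remark: borel sets B vs E} to transfer the convergence $|\ol{m}^h|\to 1$ in $L^2(\Omega;L^2(0,T;\L^2))$ from Lemma~\ref{Lemma: mbar in B, W and |mbar| to 1}(iii) to the new variables, yielding $|m_h^*|\to 1$ in $L^2(\Omega^*;L^2(0,T;\L^2))$. Along a subsequence, $|m_h^*|\to 1$ $\P^*\otimes dt\otimes dx$-a.e., while \eqref{strong conv mh* L2w} provides a further subsequence with $m_h^*\to m^*$ a.e.\ in the same sense. Passing to the pointwise limit gives $|m^*|=1$ a.e.

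\emph{Part (ii).} Set $u_h:=m_h^*\rho_{w_2}^{1/2}$ and $u:=m^*\rho_{w_2}^{1/2}$. By \eqref{strong conv mh* L2w} I already have $u_h\to u$ in $L^{2q}(\Omega^*;L^2(0,T;\L^2))$ for every $q\in[1,\infty)$. Because $\rho_{w_2}\le\rho_{w_1}$, the bound \eqref{mh* est mrho Linfty} propagates to $u_h$, and by lower semicontinuity of norms under the (already established) convergence in weaker topologies it also holds for $u$:
\[
   \sup_{h}\,\E^{*}\!\left[\sup_{t\in[0,T]}|u_h(t)|_{\L^\infty}^{2q}\right]+\E^{*}\!\left[\sup_{t\in[0,T]}|u(t)|_{\L^\infty}^{2q}\right]<\infty.
\]
The key pointwise inequality $|u_h-u|^p\le\bigl(|u_h|_{\L^\infty}+|u|_{\L^\infty}\bigr)^{p-2}|u_h-u|^2$, integrated in $x$ and then in $t$, combined with Cauchy--Schwarz in $\omega$, reduces the desired $L^p$-convergence to the finiteness of $\E^*[\sup_t(|u_h|_{\L^\infty}+|u|_{\L^\infty})^{2(p-2)}]$ together with the vanishing of $\E^{*}\bigl[\bigl(\int_0^T|u_h-u|_{\L^2}^2\,dt\bigr)^2\bigr]$; the latter is precisely \eqref{strong conv mh* L2w} applied with fourth moment in $\omega$. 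Note that the weight is absorbed into $u_h$, so no separate integrability of $\rho_{w_2}^{p/2}$ is required.

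\emph{Part (iii).} Both containment statements follow by weak compactness together with uniqueness of limits. The estimate \eqref{mh* est Dm L4} places $\{Dm_h^*\}$ in a bounded set of the reflexive Banach space $L^4(\Omega^*;L^4(0,T;\L^4))$, so a subsequence converges weakly to some $Z$; since $Dm_h^*\to Dm^*$ strongly in $L^2(\Omega^*;L^2(0,T;\L^2_{w_2}))$ by \eqref{strong conv Dmh* L2w}, the weak limit must equal $Dm^*$, and lower semicontinuity of the $L^4$-norm yields $Dm^*\in L^4(\Omega^*;L^4(0,T;\L^4))$. Similarly, \eqref{mh* est Dmh* C([0,T],L2)} gives a bounded sequence in $L^{2p}(\Omega^*;L^\infty(0,T;\L^2))$, which I view as the dual of the separable space $L^{(2p)'}(\Omega^*;L^1(0,T;\L^2))$. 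A weak-$*$ convergent subsequence has limit $Dm^*$ for the same reason as above, and weak-$*$ lower semicontinuity concludes. The main (mild) subtlety is ensuring the weak/weak-$*$ limit is indeed $Dm^*$ rather than a phantom process, which is why the strong convergence \eqref{strong conv Dmh* L2w} in the weighted space is doing the essential work.
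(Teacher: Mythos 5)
Your proof is correct. Parts (i) and (ii) follow essentially the paper's route: (i) transfers $|\ol{m}^h|\to 1$ to $m_h^*$ by equality of laws and combines it with the strong convergence \eqref{strong conv mh* L2w} (the paper does this by a triangle inequality in $\L^2_{w_2}$, you by passing to a.e.\ convergent subsequences — same substance); in (ii) your exponent split $p=(p-2)+2$ lands directly on the $\L^2_{w_2}$-distance controlled by \eqref{strong conv mh* L2w} and is, if anything, cleaner than the paper's $(p-1)+1$ split, which passes through an $L^1$-in-$x$ quantity. Part (iii) is where you genuinely diverge. The paper exploits the constraint from part (i): since $\langle m^*,Dm^*\rangle=0$ a.e., one has $\langle m^*,D^2m^*\rangle=-|Dm^*|^2$ pointwise, whence $\E^*\int_0^T|Dm^*(t)|_{\L^4}^4\,dt\le\E^*\int_0^T|D^2m^*(t)|_{\L^2}^2\,dt<\infty$ with no compactness argument at all; and for the $L^p(\Omega^*;L^\infty(0,T;\L^2))$ membership it extends the $\L^2_{w_2}\cap\mathring{\H}^1$-norm to $\H^{-1}_{w_1}$, invokes lower semicontinuity of $u\mapsto\sup_t|u(t)|$ along the $\P^*$-a.s.\ convergence in $\mathcal{C}([0,T];\H^{-1}_{w_1})$, and applies Fatou together with \eqref{mh* est Dmh* C([0,T],L2)}. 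Your route — weak compactness of $\{Dm_h^*\}$ in the reflexive space $L^4$, weak-$*$ compactness in $L^{2p}(\Omega^*;L^\infty(0,T;\L^2))$ viewed as a dual, and identification of both limits with $Dm^*$ via \eqref{strong conv Dmh* L2w} — is equally valid and more robust in that it never uses $|m^*|=1$; the price is the routine but nontrivial bookkeeping for the dual-space description of $L^\infty_t\L^2_x$-valued processes (weak-$*$ measurability versus strong measurability) and a small density argument when identifying the limit, since the strong convergence of $Dm_h^*$ is only available in the weighted space $\L^2_{w_2}$ and one must therefore test against functions that are (say) compactly supported in $x$ before passing to general elements of the predual. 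Either argument is acceptable.
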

	\begin{proof}
		\underline{Part (i).} 
		Recall Lemma \ref{Lemma: mbar in B, W and |mbar| to 1}(iii), a similar argument holds for $ \L^2_{w_2} $ (in place of $ \L^2 $). Then,
		\begin{align*}
			&\E^* \left[ \int_0^T \int_\R \left| |m^*(t,x)| - 1 \right|^2 \rho_{w_2}(x) \ dx \ dt \right] \\
			&\leq 2\E^* \left[ \int_0^T \int_\R \left| |m_h^*(t,x)|-1 \right|^2 \rho_{w_2}(x) \ dx \ dt \right] 
			+ 2\E^* \left[ \int_0^T |m_h^*(t) - m^*(t)|^2_{\L^2_{w_2}} dt \right],
		\end{align*}
		where the first expectation on the right-hand side converges to $ 0 $ since the laws of $ \ol{m}^h $ and $ m_h^* $ are the same on $ L^2(0,T;\L^2_{w_2}) $, and the second expectation converges to $ 0 $ by \eqref{strong conv mh* L2w}. 
		Thus, 
		\begin{align*}
			\E^* \left[ \int_0^T \int_\R \left| |m^*(t,x)| - 1 \right|^2 \rho_{w_2}(x) \ dx \ dt \right] = 0,
		\end{align*}
		which implies $ |m^*(t,x)| = 1 $ a.e. on $ [0,T] \times \R $, $ \P^* $-a.s.
		This also means 
		\begin{align*}
			m^* \in L^p(\Omega^*; L^p(0,T; \L^p_w)), \quad \forall p \in [1,\infty), \ w \geq 1. 
		\end{align*}
	
		\underline{Part (ii).}		
		For $ w_2 > w_1 \geq 1 $ and $ p \in [2,\infty) $, 
		\begin{align*}
			&\E^* \left[ \int_0^T |(m_h^*(t) - m^*(t)) \rho_{w_2}^\frac{1}{2} |_{\L^p}^p \ dt \right] \\
			&\leq \E^* \left[ \sup_{t \in [0,T]} \left| (m_h^*(t) - m^*(t)) \rho_{w_2}^\frac{1}{2} \right|_{\L^\infty}^{p-1} \int_0^T \int_\R |(m_h^*(t,x) - m^*(t,x)) \rho_{w_2}^\frac{1}{2}(x)| \ dx \ dt \right] \\
			&\leq C \la \E^* \left[ \sup_{t \in [0,T]} \la |m_h^*(t) \rho_{w_1}^\frac{1}{2}|_{\L^\infty}^{p-1} + |m^*(t) \rho_{w_1}^\frac{1}{2}|_{\L^\infty}^{p-1} \ra^2  \right] \ra^\frac{1}{2} \\
			&\quad \times \la \E^* \left[ \int_0^T \int_\R |m_h^*(t,x) - m^*(t,x)|^2 \rho_{w_2}(x) \ dx \ dt  \right] \ra^\frac{1}{2},
		\end{align*}
		for some constant $ C $ that depends on $ p $ and $ T $. 
		Then, by the $ \L^\infty $-estimate \eqref{mh* est mrho Linfty}, Lemma \ref{Lemma: |m*|=1 a.s., mh* to m* in Lp, Dm* in L4 cap C}(i), \eqref{rhow properties} and the strong convergence \eqref{strong conv mh* L2w}, 
		\begin{align*}
			\lim_{h \to 0} \E^*\left[ \int_0^T |(m_h^*(t) - m^*(t)) \rho_{w_2}^\frac{1}{2} |_{\L^p}^p \ dt \right] = 0, \quad p \in [2, \infty). 
		\end{align*}	
		
		\underline{Part (iii).} 
		From part (i), we have $ \frac{1}{2} D|m^*(t,x)|^2 = \langle m^*, Dm^* \rangle(t,x) = 0 $ and thus 
		\begin{align*}
			\lb m^*(t,x), D^2m^*(t,x) \rb = - |Dm^*(t,x)|^2, 
		\end{align*}
		for $ (t,x) $-a.e. $ \P^* $-a.s.
		Since $ D^2m^* \in L^2(\Omega^*; L^2(0,T; \L^2)) $, we deduce 
		\begin{align*}
			\E^*\left[ \int_0^T |Dm^*(t)|_{\L^4}^4 \ dt \right]
			&= \E^*\left[ \int_0^T \int_\R \lb m^*(t,x), D^2m^*(t,x) \rb^2 \ dx \ dt \right] \\
			&\leq \E^*\left[ \int_0^T |D^2 m^*(t)|_{\L^2}^2 \ dt \right] 
			< \infty.
		\end{align*}
		As in \cite{Brzezniak2013_sLLG}, we extend the definition of the $ \L^2_{w_2} \cap \mathring{\H}^1 $-norm to $ \H^{-1}_{w_1} $ such that \smash{$ |u|_{\L^2_{w_2} \cap \mathring{\H}^1} = \infty $} if the function $ u $ is in $ \H^{-1}_{w_1} $ but not $ \L^2_{w_2} \cap \mathring{\H}^1 $, where the extended map 
		\begin{align*}
			u \mapsto \sup_{t \in [0,T]} |u(t)|_{\L^2_{w_2} \cap \mathring{\H}^1}, \quad u \in \mathcal{C}([0,T]; \H^{-1}_{w_1}),
		\end{align*}
		is lower semicontinuous. Then, by the pointwise convergence \eqref{ptw conv: mh* L2(H1) + C(Xw-1)}, Fatou's lemma and \eqref{mh* est Dmh* C([0,T],L2)}, 
		\begin{align*}
			\E^*\left[ \sup_{t \in [0,T]} |Dm^*(t)|_{\L^2}^p \right]
			&\leq \liminf_{h \to 0} \E^*\left[\sup_{t \in [0,T]} |m_h^*(t)|_{\L^2_{w_2} \cap \mathring{\H}^1}^p  \right] 
			<\infty,
		\end{align*}
		for $ p \in [2,\infty) $.
	\end{proof}

	\begin{lemma}\label{Lemma: strong conv m*xDm* Lw2}
		We have the following strong convergences:
		\begin{enumerate}[label=(\roman*)]		
			\item 
			$ m_h^* \times Dm_h^* \to m^* \times Dm^* $ and $ \langle m_h^*, Dm_h^* \rangle \to 0 $ in $ L^2(\Omega^*; L^2(0,T; \L^2_{w_2})) $, 
			
			\item
			$ m_h^* \times \la m_h^* \times D m_h^* \ra \to m^* \times (m^* \times Dm^*) $ in $ L^2(\Omega^*; L^2(0,T; \L_{w_1+w_2}^2)) $.
		\end{enumerate}
	\end{lemma}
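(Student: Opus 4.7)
The plan is to reduce both statements to standard telescoping decompositions of cross/inner products and apply Hölder-type estimates, drawing on the following inputs already available: the strong convergence \smash{$m_h^*\rho_{w_2}^{1/2}\to m^*\rho_{w_2}^{1/2}$} in $L^4(\Omega^*;L^4(0,T;\L^4))$ from Lemma \ref{Lemma: |m*|=1 a.s., mh* to m* in Lp, Dm* in L4 cap C}(ii); the strong convergence $Dm_h^*\to Dm^*$ in $L^2(\Omega^*;L^2(0,T;\L^2_{w_2}))$ from \eqref{strong conv Dmh* L2w}; the uniform $L^4(\Omega^*;L^4(0,T;\L^4))$-bound on $Dm_h^*$ from \eqref{mh* est Dm L4}; the weighted $\L^\infty$-bound on $m_h^*\rho_{w_1}^{1/2}$ from \eqref{mh* est mrho Linfty}; and the constraint $|m^*|=1$ a.e.\ (hence $\langle m^*,Dm^*\rangle=\tfrac12 D|m^*|^2=0$ a.e.) from Lemma \ref{Lemma: |m*|=1 a.s., mh* to m* in Lp, Dm* in L4 cap C}(i).

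For part (i) I would use the decomposition
\[
m_h^*\times Dm_h^* - m^*\times Dm^* = (m_h^* - m^*)\times Dm_h^* + m^*\times(Dm_h^* - Dm^*).
\]
The second term is pointwise bounded by $|Dm_h^* - Dm^*|$ (since $|m^*|=1$ a.e.), so its $L^2_{\omega,t}\L^2_{w_2}$-norm vanishes by \eqref{strong conv Dmh* L2w}. For the first term, Cauchy--Schwarz in $x$ (after extracting $\rho_{w_2}^{1/2}$ into the $m_h^*-m^*$ factor) gives
\[
\bigl|(m_h^* - m^*)\times Dm_h^*\bigr|^2_{\L^2_{w_2}} \leq \bigl|(m_h^* - m^*)\rho_{w_2}^{1/2}\bigr|^2_{\L^4}\,|Dm_h^*|^2_{\L^4},
\]
and a further Cauchy--Schwarz in $(\omega,t)$ factorises this into a vanishing factor (by Lemma \ref{Lemma: |m*|=1 a.s., mh* to m* in Lp, Dm* in L4 cap C}(ii) with $p=4$) and a uniformly bounded one (by \eqref{mh* est Dm L4}). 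The statement for $\langle m_h^*,Dm_h^*\rangle$ follows from the identical decomposition with $\langle\cdot,\cdot\rangle$ in place of $\times$, using $|\langle a,b\rangle|\leq |a||b|$ and $\langle m^*,Dm^*\rangle\equiv 0$.

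For part (ii) I would use the three-term telescoping
\begin{align*}
m_h^*\times(m_h^*\times Dm_h^*) - m^*\times(m^*\times Dm^*)
&= (m_h^* - m^*)\times(m_h^*\times Dm_h^*) \\
&\quad + m^*\times\bigl((m_h^* - m^*)\times Dm_h^*\bigr) \\
&\quad + m^*\times\bigl(m^*\times(Dm_h^* - Dm^*)\bigr).
\end{align*}
Because $\rho_{w_1+w_2}=\rho_{w_1}\rho_{w_2}\leq\rho_{w_2}$, the second and third terms are controlled respectively by $\bigl|(m_h^*-m^*)\times Dm_h^*\bigr|_{\L^2_{w_2}}$ and $|Dm_h^*-Dm^*|_{\L^2_{w_2}}$ after bounding the $m^*$ factors by $1$, and both were shown to vanish in $L^2(\Omega^*;L^2(0,T))$ in part (i). For the first term I would split $\rho_{w_1+w_2}=\rho_{w_1}\rho_{w_2}$ and estimate
\[
\bigl|(m_h^* - m^*)\times(m_h^*\times Dm_h^*)\bigr|^2\rho_{w_1+w_2}
\leq \bigl(|m_h^*|\rho_{w_1}^{1/2}\bigr)^2\bigl(|m_h^* - m^*|\rho_{w_2}^{1/2}\bigr)^2|Dm_h^*|^2,
\]
pull out $|m_h^*(t)\rho_{w_1}^{1/2}|_{\L^\infty}$, integrate in $x$ by Cauchy--Schwarz, and run a triple Hölder estimate in $(\omega,t)$ pairing the three factors controlled by \eqref{mh* est mrho Linfty}, Lemma \ref{Lemma: |m*|=1 a.s., mh* to m* in Lp, Dm* in L4 cap C}(ii), and \eqref{mh* est Dm L4} respectively.

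The main obstacle is the bookkeeping in part (ii): one must distribute the weight $\rho_{w_1+w_2}=\rho_{w_1}\rho_{w_2}$ so that one factor (uniformly bounded in $L^p_\omega L^\infty_t\L^\infty_x$) absorbs $\rho_{w_1}$, the vanishing factor (from Lemma \ref{Lemma: |m*|=1 a.s., mh* to m* in Lp, Dm* in L4 cap C}(ii)) absorbs $\rho_{w_2}$, and the remaining $Dm_h^*$ factor (bounded in $L^4_{\omega,t,x}$) is left unweighted. Once that splitting is set, each piece reduces to estimates already used in part (i) and no new analytic input is needed.
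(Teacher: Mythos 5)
Your proposal is correct and follows essentially the same route as the paper: the same two-term splitting in (i) with the same Hölder pairings (vanishing factor from Lemma \ref{Lemma: |m*|=1 a.s., mh* to m* in Lp, Dm* in L4 cap C}(ii) against the uniform bound \eqref{mh* est Dm L4}, plus $|m^*|=1$ and \eqref{strong conv Dmh* L2w} for the remaining term), and the same weight distribution $\rho_{w_1+w_2}=\rho_{w_1}\rho_{w_2}$ with the triple Hölder estimate in (ii). The only cosmetic difference is that you expand (ii) into three terms where the paper keeps two and absorbs your second and third terms into a single application of part (i); the estimates are identical.
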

	\begin{proof}		
		\underline{Part (i).}
		Note that
		\begin{align*}
			m_h^* \times Dm_h^* - m^* \times Dm^* 
			= \la m_h^* - m^* \ra \times Dm_h^* + m^* \times \la Dm_h^* - Dm^* \ra. 
		\end{align*}
		Then by H{\"o}lder's inequality,
		\begin{align*}
			&\E^* \left[ \int_0^T |(m_h^*(t) - m^*(t)) \times Dm_h^*(t)|_{\L^2_{w_2}}^2 \ dt  \right] \\
			&\leq \la \E^*\left[ \int_0^T |(m_h^*(t) - m^*(t)) \rho_{w_2}^\frac{1}{2} |_{\L^4}^4 \ dt \right] \ra^\frac{1}{2} 
			\la \E^*\left[ \int_0^T |Dm_h^*(t)|_{\L^4}^4 \ dt \right] \ra^\frac{1}{2},
		\end{align*}
		where the last line converges to $ 0 $ by Lemma \ref{Lemma: |m*|=1 a.s., mh* to m* in Lp, Dm* in L4 cap C}(ii) and \eqref{mh* est Dm L4}. 	
		Similarly, by Lemma \ref{Lemma: |m*|=1 a.s., mh* to m* in Lp, Dm* in L4 cap C}(i),
		\begin{align*}
			\E^* \left[ \int_0^T |m^*(t) \times (Dm_h^*(t) - Dm^*(t)) |_{\L^2_{w_2}}^2 \ dt \right] 
			&\leq \E^*\left[ \int_0^T |Dm_h^*(t) - Dm^*(t)|_{\L^2_{w_2}}^2 \ dt \right],
		\end{align*}
		where the right-hand side converges to $ 0 $ by \eqref{strong conv Dmh* L2w}. 	
		Therefore, 
		\begin{equation}\label{pf: strong conv m*xDm*}
			\lim_{h \to 0} \E^* \left[ \int_0^T |m_h^*(t) \times Dm_h^*(t) - m^*(t) \times Dm^*(t) |_{\L^2_w}^2 \ dt \right] = 0. 
		\end{equation}
		Since $ |m^*(t,x)|=1 $, we have $ \langle m^*, Dm^* \rangle (t,x)=0 $. By the same argument as above (replacing cross product with scalar product), $ \langle m_h^*, Dm_h^* \rangle \to \langle m^*, Dm^* \rangle = 0 $ in $ L^2(\Omega^*; L^2(0,T; \L^2_{w_2})) $.
		
		\underline{Part (ii).} 
		Note that
		\begin{align*}
			&m_h^* \times \la m_h^* \times Dm_h^* \ra - m^* \times \la m^* \times Dm^* \ra \\
			&= \la m_h^* - m^* \ra \times \la m_h^* \times Dm_h^* \ra 
			+ m^* \times \la m_h^* \times Dm_h^* - m^* \times Dm^* \ra.
		\end{align*}
		Then, with $ \rho_{w_1 + w_2} = \rho_{w_1} \rho_{w_2} $,
		\begin{align*}
			&\E^* \left[ \int_0^T \left| (m_h^*(t) - m^*(t)) \times \la m_h^*(t) \times  Dm_h^*(t) \ra \right|_{\L^2_{w_1+w_2}}^2 \ dt \right] \\
			&\leq \la \E^* \left[ \int_0^T |(m_h^*(t) - m^*(t)) \rho_{w_2}^\frac{1}{2} |_{\L^4}^4 \ dt \right] \ra^{\frac{1}{2}} 
			\la \E^* \left[ \int_0^T | m_h^*(t) \rho_{w_1}^\frac{1}{2} \times Dm_h^*(t) |_{\L^4}^4 \right] \ra^\frac{1}{2} \\
			&\leq \la \E^* \left[ \int_0^T |(m_h^*(t) - m^*(t)) \rho_{w_2}^\frac{1}{2} |_{\L^4}^4 \ dt \right] \ra^{\frac{1}{2}} \\
			&\quad \times \la \E^*\left[ \sup_{t \in [0,T]} | m_h^*(t) \rho_{w_1}^\frac{1}{2} |_{\L^\infty}^8 \right] \ra^{\frac{1}{4}} 
			\la \E^*\left[ \la \int_0^T \left| Dm_h^*(t) \right|_{\L^4}^4 dt \ra^2 \right] \ra^{\frac{1}{4}},
		\end{align*}
		where the first expectation in the last inequality converges to $ 0 $ by Lemma \ref{Lemma: |m*|=1 a.s., mh* to m* in Lp, Dm* in L4 cap C}(ii) and the second and third expectations are finite by \eqref{mh* est mrho Linfty} and \eqref{mh* est Dm L4}.
		Also, 
		\begin{align*}
			&\E^* \left[  \int_0^T \left| m^*(t) \times \la m_h^*(t) \times Dm_h^*(t) - m^*(t) \times Dm^*(t) \ra \right|_{\L^2_{w_1+w_2}}^2 \ dt \right]
		\end{align*}
		converges to $ 0 $ Lemma \ref{Lemma: |m*|=1 a.s., mh* to m* in Lp, Dm* in L4 cap C}(i) and part (i).
		Then, the strong convergence of $ m_h^* \times (m_h^* \times Dm_h^*) $ follows as desired. 
	\end{proof}

	\begin{lemma}\label{Lemma: weak conv m*xD2m*, m*x(m*D2m*) Lw2}
		Assume that $ w_2 \geq 4w_1 $. For any measurable process $ \varphi \in L^4(\Omega^*; \L^4(0,T; \L^4_{w_2})) $, we have the following weak convergences (with test function $ \varphi $):
		\begin{enumerate}[label=(\roman*)]
			\item 
			$ m_h^* \times (m_h^* \times D m_h^*) \rightharpoonup m^* \times (m^* \times Dm^*) $ in $ L^2(\Omega^*; L^2(0,T; \L^2_{w_2})) $,
		
			\item
			$ |m_h^* \times Dm_h^*|^2 m_h^* \rightharpoonup |m^* \times Dm^*|^2m^* $ in $ L^2(\Omega^*; L^2(0,T; \L^2_{w_2})) $,
			
			\item
			$ Dm_h^* \times (m_h^* \times Dm_h^*) \rightharpoonup Dm^* \times (m^* \times Dm^*) $ in $ L^2(\Omega^*; L^2(0,T; \L^2_{w_2})) $,
			
			\item
			$ \langle m_h^*, Dm_h^* \rangle m_h^* \times Dm_h^* \rightharpoonup 0 $ in $ L^2(\Omega^*; L^2(0,T; \L^2_{w_2})) $,
			
			\item 
			$ m_h^* \times D^2m_h^* \rightharpoonup m^* \times D^2m^* $ in $ L^2(\Omega^*; L^2(0,T; \L^2_{w_2})) $,
			
			\item
			$ m_h^* \times (m_h^* \times D^2m_h^*) \rightharpoonup m^* \times (m^* \times D^2m^*) $ in $ L^2(\Omega^*; L^2(0,T; \L^2_{w_2})) $.
		\end{enumerate}
	\end{lemma}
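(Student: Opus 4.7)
The plan is to prove each of (i)--(vi) by decomposing the difference between the $h$-indexed expression and its candidate limit into a sum of terms, each containing exactly one factor of the form $m_h^*-m^*$, $Dm_h^*-Dm^*$, or $D^2m_h^*-D^2m^*$. After pairing against the test function $\varphi$, I will apply H\"older's inequality with the weight $\rho_{w_2}$ split as a product $\rho_{w_1/2}^{a}\rho_{w_2/4}^{b}\rho_{\mathrm{rem}}$, so that every resulting factor lies in a space whose norm is either controlled uniformly in $h$ or tends to zero as $h\to 0$. The needed controls come from the pathwise $\L^\infty$-bound on $m_h^*\rho_{w_1}^{1/2}$ in \eqref{mh* est mrho Linfty}; the uniform $L^4$-estimate on $Dm_h^*$ in \eqref{mh* est Dm L4}; the $L^p$-convergence $m_h^*\rho_{w_2}^{1/2}\to m^*\rho_{w_2}^{1/2}$ in $L^p(\Omega^*;L^p(0,T;\L^p))$ from Lemma \ref{Lemma: |m*|=1 a.s., mh* to m* in Lp, Dm* in L4 cap C}(ii); and the strong convergences of $m_h^*\times Dm_h^*$, $\langle m_h^*,Dm_h^*\rangle$ and $m_h^*\times(m_h^*\times Dm_h^*)$ supplied by Lemma \ref{Lemma: strong conv m*xDm* Lw2}. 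The hypothesis $w_2\geq 4w_1$ is used to ensure that the residual weight exponent is non-negative in each H\"older split, so that $\rho_{\mathrm{rem}}\leq 1$ pointwise.

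For parts (i)--(iv), which involve only $m_h^*$ and $Dm_h^*$, each telescoped summand reduces to an integral
\begin{align*}
\E^*\int_0^T\!\!\int_\R\bigl(\text{polynomial in }m_h^*,Dm_h^*\bigr)\cdot(\text{one difference})\cdot\varphi\,\rho_{w_2}\,dx\,dt,
\end{align*}
which is estimated by a single spatial H\"older of type $(\infty,4,4,\infty)$, followed by H\"older in $(t,\omega)$. The $L^\infty$ slot absorbs every copy of $m_h^*$ together with a factor $\rho_{w_1/2}$, the $L^4$ slot takes $Dm_h^*$ and $\varphi\rho_{w_2/4}$, and the remaining $L^4$ slot takes the difference factor dressed with $\rho_{w_2/4}$. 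Part (iv) is the easiest: Lemma \ref{Lemma: strong conv m*xDm* Lw2}(i) provides $\langle m_h^*,Dm_h^*\rangle\to 0$ strongly in $L^2(\Omega^*;L^2(0,T;\L^2_{w_2}))$, and the remaining factors $(m_h^*\times Dm_h^*)\cdot\varphi$ are uniformly bounded in the dual via the same tools.

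Parts (v) and (vi) are more delicate because only weak convergence is available for $D^2m_h^*$. For (v) I split
\begin{align*}
m_h^*\times D^2m_h^* - m^*\times D^2m^* = (m_h^*-m^*)\times D^2m_h^* + m^*\times(D^2m_h^* - D^2m^*).
\end{align*}
The first summand is handled by the H\"older scheme above, placing $D^2m_h^*$ in $L^2$ (bounded by \eqref{mh* est D2mh* L2([0,T],L2)}) and the strong-convergent difference in $L^4(\rho_{w_2/4})$. For the second, I exchange the cross product inside the weighted pairing so that the integral becomes
\begin{align*}
\E^*\int_0^T \lb D^2m_h^* - D^2m^*,\,(\varphi\rho_{w_2})\times m^*\rb_{\L^2}dt,
\end{align*}
and invoke the weak convergence \eqref{weak conv D2mh* L2} of $D^2m_h^*$ in $L^{2p}(\Omega^*;L^2(0,T;\L^2))$ against the fixed element $(\varphi\rho_{w_2})\times m^*$, which sits in $L^2(\Omega^*;L^2(0,T;\L^2))$ since $|m^*|=1$ a.s.\ (Lemma \ref{Lemma: |m*|=1 a.s., mh* to m* in Lp, Dm* in L4 cap C}(i)) and $\varphi\rho_{w_2}\in L^2$ by the integrability of $\rho_{w_2}$. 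Part (vi) is reduced to (v) by the identity $\lb m_h^*\times(m_h^*\times D^2m_h^*),\varphi\rho_{w_2}\rb=-\lb m_h^*\times D^2m_h^*,\,m_h^*\times\varphi\rho_{w_2}\rb$, together with an additional correction term arising from replacing the outer $m_h^*$ by $m^*$, which is strongly convergent in $L^2$ by the $L^\infty$-bound and Lemma \ref{Lemma: |m*|=1 a.s., mh* to m* in Lp, Dm* in L4 cap C}(ii).

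The main obstacle is precisely this last step in (v) and (vi): converting the weak convergence of $D^2m_h^*$ into convergence of the paired integral requires the multiplier to converge \emph{strongly} in $L^2(\Omega^*;L^2(0,T;\L^2))$, not merely weakly. This forces a careful balancing of the pathwise $\L^\infty$-bound on $m_h^*\rho_{w_1}^{1/2}$ against the strong $L^p$-convergence of $m_h^*-m^*$ carrying weight $\rho_{w_2}^{1/2}$, and it is in this balancing that the assumption $w_2\geq 4w_1$ makes itself visible, absorbing the accumulation of weight factors $\rho_{w_1/2}$ coming from each occurrence of $m_h^*$ and $m^*$ in the double cross product.
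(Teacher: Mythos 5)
Your proposal is correct and follows essentially the same route as the paper: the same one-difference-per-term telescoping, the same weighted H\"older splittings relying on \eqref{mh* est mrho Linfty}, \eqref{mh* est Dm L4}, \eqref{mh* est D2mh* L2([0,T],L2)}, Lemma \ref{Lemma: |m*|=1 a.s., mh* to m* in Lp, Dm* in L4 cap C}(ii) and Lemma \ref{Lemma: strong conv m*xDm* Lw2}, and the same treatment of the second-derivative terms (strong convergence of the $(m_h^*-m^*)$ factor against a bounded $D^2m_h^*$ for one summand, weak convergence \eqref{weak conv D2mh* L2w} against the fixed multiplier $m^*\times\varphi$ for the other). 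Your reduction of (vi) to (v) via moving the outer cross product onto the test function is exactly the paper's step as well.
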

	\begin{proof}		
		\underline{Part (i).}
		As in lemma \ref{Lemma: strong conv m*xDm* Lw2}(ii), we first observe that
		\begin{equation}\label{pf: G(mh) weak conv part 1}
		\begin{aligned}
			&\E^*\left[ \int_0^T \lb (m_h^* - m^*) \times (m_h^* \times Dm_h^*), \varphi \rb_{\L^2_{w_2}}(t) \ dt \right] \\
			&\leq \la \E^*\left[ \int_0^T | \la m_h^*(t) - m^*(t) \ra \rho_{w_2}^\frac{1}{2} |_{\L^4}^4 \ dt \right] \ra^\frac{1}{4} \\
			&\quad \times \la \E^*\left[ \int_0^T | m_h^*(t) \times Dm_h^*(t) \rho_{w_2}^\frac{1}{4} |_{\L^2}^2 \ dt \right] \ra^\frac{1}{2}  
			\la \E^*\left[ \int_0^T |\varphi(t)|_{\L^4_{w_2}}^4 \right]  \ra^\frac{1}{4},
		\end{aligned}
		\end{equation}
		where the first expectation in the last line converges to $ 0 $ by Lemma \ref{Lemma: |m*|=1 a.s., mh* to m* in Lp, Dm* in L4 cap C}(ii), the second and the third expectations are finite by \eqref{mh* ests cross products} with $ w_2 \geq 2w_1 $ (equivalently, $ \rho_{w_2} \leq \rho_{w_1}^2 $) and $ \varphi \in L^4(\Omega^*; L^4(0,T; \L^4_{w_2})) $.
		Since $ |m^*(t,x)| = 1 $ from Lemma \ref{Lemma: |m*|=1 a.s., mh* to m* in Lp, Dm* in L4 cap C}(i), we have $ m^* \times \varphi \in L^2(\Omega^*; L^2(0,T; \L^2_{w_2})) $. Then by Lemma \ref{Lemma: strong conv m*xDm* Lw2}(i),
		\begin{equation}\label{pf: G(mh) weak conv part 2}
			\lim_{h \to 0} \E^*\left[ \int_0^T \lb m^* \times (m_h^* \times Dm_h^* - m^* \times Dm^*), \varphi \rb_{\L^2_{w_2}}(t) \ dt \right] = 0. 
		\end{equation}
		Combining \eqref{pf: G(mh) weak conv part 1} and \eqref{pf: G(mh) weak conv part 2}, we have the desired weak convergence for part (i). 
		
		\underline{Part (ii).}
		\begin{align*}
			&\left|m_h^* \times Dm_h^* \right|^2m_h^* - |m^* \times Dm^*|^2m^* \\
			&= \la \left|m_h^* \times Dm_h^* \right|^2- |m^* \times Dm^*|^2 \ra m_h^* + |m^* \times Dm^*|^2 \la m_h^* - m^* \ra \\
			&\leq \left| m_h^* \times Dm_h^* - m^* \times Dm^* \right| \left| m_h^* \times Dm_h^* + m^* \times Dm^* \right| \ |m_h^*| 
			+ |m^* \times Dm^*|^2 \ |m_h^* - m^*|.
		\end{align*}
		Then, for the first term in the line above,
		\begin{equation}\label{pf: G1(mh) conv part 1}
			\begin{aligned}
				&\E^*\left[ \int_0^T \lb \left| m_h^* \times Dm_h^* - m^* \times Dm^* \right| \left| m_h^* \times Dm_h^* + m^* \times Dm^* \right| m_h^*, \varphi \rb_{\L^2_{w_2}}(t) \ dt \right] \\
				&\leq \la \E^* \left[ \int_0^T \int_\R \la \left|m_h^* \times Dm_h^* \right| + |m^* \times Dm^*| \ra^2 |m_h^*|^2 |\varphi|^2 (t,x) \ \rho_{w_2}(x) \ dx \ dt \right]\ra^\frac{1}{2}  \\
				&\quad \times \la \E^*\left[ \int_0^T \left|m_h^* \times Dm_h^* - m^* \times Dm^*\right|_{\L^2_{w_2}}^2 (t) \ dt \right]\ra^\frac{1}{2}.
			\end{aligned}	
		\end{equation}
		We show that the first expectation on the right-hand side of \eqref{pf: G1(mh) conv part 1} is finite. 
		Since $ w_2 \geq 4w_1 $, it holds that $ \rho_{w_2} \leq \rho_{w_1}^2 \rho_{w_2}^\frac{1}{2} $ and 
		\begin{align*}
			&\E^* \left[ \int_0^T \int_\R \left|m_h^* \times Dm_h^* \right|^2 |m_h^*|^2 |\varphi|^2 (t,x) \ \rho_{w_2}(x) \ dx \ dt \right] \\
			&\leq \E^*\left[ \sup_{t \in [0,T]} |m_h^*(t) \rho_{w_1}^\frac{1}{2} |_{\L^\infty}^4 \int_0^T \int_\R |Dm_h^*|^2 |\varphi \rho_{w_2}^\frac{1}{4}|^2 (t,x) \ dx \ dt \right] \\
			&\leq \la \E^*\left[ \sup_{t \in [0,T]} |m_h^*(t) \rho_{w_1}^\frac{1}{2}|_{\L^\infty}^{16} \right] \ra^\frac{1}{4} 
			\times \la \E^*\left[ \la \int_0^T |Dm_h^*(t)|_{\L^4}^4 \ dt \ra^2 \right] \ra^\frac{1}{4} \\
			&\quad \times \la \E^*\left[ \int_0^T |\varphi(t)|_{\L^4_{w_2}}^4 \ dt \right] \ra^\frac{1}{2},
		\end{align*}
		where three expectations in the last inequality are finite by \eqref{mh* est mrho Linfty}, \eqref{mh* est Dm L4} and $ \varphi \in L^4(\Omega^*; L^4(0,T; \L^4_{w_2})) $. 
		Similarly, by Lemma \ref{Lemma: |m*|=1 a.s., mh* to m* in Lp, Dm* in L4 cap C}(i) and (iii),
		\begin{align*}
			&\E^* \left[ \int_0^T \int_\R \left|m^* \times Dm^* \right|^2 |\varphi|^2(t,x) \ \rho_{w_2}(x) \ dx \ dt \right] \\
			&\leq \la \E^*\left[ \int_0^T |Dm^*(t)|_{\L^4_{w_2}}^4 \ dt \right] \ra^\frac{1}{4} \la \E^*\left[ \int_0^T |\varphi(t)|_{\L^4_{w_2}}^4 \ dt \right] \ra^\frac{1}{4} < \infty.
		\end{align*}
		Hence, the left-hand side of \eqref{pf: G1(mh) conv part 1} converges to $ 0 $ as $ h \to 0 $ by Lemma \ref{Lemma: strong conv m*xDm* Lw2}(i).		
		Similarly, with $ |m^*(t,x)|=1 $, $ \P^* $-a.s. we have
		\begin{equation}\label{pf: G1(mh) conv part 2}
			\begin{aligned}
				&\E^*\left[ \int_0^T \int_\R \lb |m^* \times Dm^*|^2 \la m_h^*- m^* \ra (t,x), \varphi(t,x) \rb \rho_{w_2}(x) \ dx \ dt \right] \\
				&\leq \la \E^*\left[ \int_0^T |Dm^*(t)|^4_{\L^4} \ dt \right] \ra^\frac{1}{2} \times \la \E^*\left[ \int_0^T |\varphi(t)|_{\L^4_{w_2}}^4 \ dt \right] \ra^\frac{1}{4}\\
				&\quad \times \la \E^*\left[ \int_0^T |(m_h^*(t) - m^*(t)) \rho_{w_2}^\frac{1}{2}|_{\L^4}^4 \ dt  \right] \ra^\frac{1}{4}
			\end{aligned}
		\end{equation}
		where the last line converges to $ 0 $ by Lemma \ref{Lemma: |m*|=1 a.s., mh* to m* in Lp, Dm* in L4 cap C}(ii) -- (iii). 		
		Combining \eqref{pf: G1(mh) conv part 1} and \eqref{pf: G1(mh) conv part 2}, we have the desired weak convergence for part (ii).

		\underline{Part (iii).} 
		Note that
		\begin{align*}
			&Dm_h^* \times (m_h^* \times Dm_h^*) - Dm^* \times (m^* \times Dm^*) \\
			&= (Dm_h^* - Dm^*) \times (m^* \times Dm^*) + Dm_h^* \times (m_h^* \times Dm_h^* - m^* \times Dm^*).
		\end{align*}
		Then, 
		\begin{align*}
			&\E^* \left[ \int_0^T \lb (Dm_h^*(t) - Dm^*(t)) \times (m^*(t) \times Dm^*(t)), \varphi(t) \rho_{w_2} \rb_{\L^2} \ dt \right] \\
			&\leq \la \E^*\left[ \int_0^T |Dm^*(t) \rho_{w_2}^\frac{1}{4}|^4_{\L^4} \ dt \right] \ra^\frac{1}{4} \times \la \E^*\left[ \int_0^T |\varphi(t)|_{\L^4_{w_2}}^4 \ dt \right] \ra^\frac{1}{4}\\
			&\quad \times \la \E^*\left[ \int_0^T |Dm_h^*(t) - Dm^*(t)|_{\L^2_{w_2}}^2 \ dt  \right] \ra^\frac{1}{2},
		\end{align*}
		where the last line converges to $ 0 $ by Lemma \ref{Lemma: |m*|=1 a.s., mh* to m* in Lp, Dm* in L4 cap C}(iii) and \eqref{strong conv Dmh* L2w}. 
		Similarly, 
		\begin{align*}
			&\E^* \left[ \int_0^T \lb Dm_h^*(t) \times (m_h^*(t) \times Dm_h^*(t) - m^*(t) \times Dm^*(t)), \varphi(t) \rho_{w_2} \rb_{\L^2} \ dt \right] \\
			&\leq \la \E^*\left[ \int_0^T |Dm_h^*(t) \rho_{w_2}^\frac{1}{4}|^4_{\L^4} \ dt \right] \ra^\frac{1}{4} \times \la \E^*\left[ \int_0^T |\varphi(t)|_{\L^4_{w_2}}^4 \ dt \right] \ra^\frac{1}{4}\\
			&\quad \times \la \E^*\left[ \int_0^T |m_h^*(t) \times Dm_h^*(t) - m^*(t) \times Dm^*(t)|_{\L^2_{w_2}}^2 \ dt  \right] \ra^\frac{1}{2},
		\end{align*}
		which converges to $ 0 $ by \eqref{mh* est Dm L4} and Lemma \ref{Lemma: strong conv m*xDm* Lw2}(i).
		Together, we have 
		\begin{align*}
			\lim_{h \to 0} \E^* \left[ \int_0^T \left| \langle Dm_h^* \times (m_h^* \times Dm_h^*) - Dm^* \times (m^* \times Dm^*), \varphi \rangle_{\L^2_{w_2}}(t) \right| dt \right] = 0.
		\end{align*}

		\underline{Part (iv).}
		Again, since $ w_2 \geq 4 w_1 $, we have $ \rho_{w_2}^\frac{1}{4} \leq \rho_{w_1} $. Then,
		\begin{align*}
			&\E^* \left[ \int_0^T \lb \ \langle m_h^*(t) , Dm_h^*(t) \rangle m_h^*(t) \times Dm_h^*(t), \varphi(t) \rho_{w_2} \rb_{\L^2} \ dt \right] \\
			&\leq \la \E^*\left[ \int_0^T |m_h^*(t) \times Dm_h^*(t) \rho_{w_2}^\frac{1}{4}|^4_{\L^4} \ dt \right] \ra^\frac{1}{4} \\
			&\quad \times \la \E^*\left[ \int_0^T |\varphi(t)|_{\L^4_{w_2}}^4 \ dt \right] \ra^\frac{1}{4} \la \E^*\left[ \int_0^T |\langle m_h^*(t), Dm_h^*(t) \rangle|_{\L^2_{w_2}}^2 \ dt  \right] \ra^\frac{1}{2} \\
			&\leq \la \E^*\left[ \sup_{t \in [0,T]} |m_h^*(t) \rho_{w_1}|_{\L^\infty}^8 \right] \ra^{\frac{1}{8}} \la \E^* \left[ \la \int_0^T |Dm_h^*(t)|^4_{\L^4} \ dt \ra^2 \right] \ra^\frac{1}{8} \\
			&\quad \times \la \E^*\left[ \int_0^T |\varphi(t)|_{\L^4_{w_2}}^4 \ dt \right] \ra^\frac{1}{4} \la \E^*\left[ \int_0^T |\langle m_h^*(t), Dm_h^*(t) \rangle|_{\L^2_{w_2}}^2 \ dt  \right] \ra^\frac{1}{2}
		\end{align*}
		where the right-hand side converges to $ 0 $ by \eqref{mh* est mrho Linfty} (with $ \rho_{w_1}^\frac{1}{2} \leq 1 $), \eqref{mh* est Dm L4} and the convergence of the scalar product in Lemma \ref{Lemma: strong conv m*xDm* Lw2}(i).

		\underline{Part (v).} 
		\begin{align*}
			&\langle m_h^* \times D^2m_h^* - m^* \times D^2m^*, \varphi \rangle_{\L^2_{w_2}} \\
			&= \langle (m_h^* - m^*) \times D^2 m_h^*, \varphi \rho_{w_2} \rangle_{\L^2}
			+ \langle m^* \times (D^2m_h^* - D^2m^*), \varphi \rho_{w_2} \rangle_{\L^2}.
		\end{align*}
		Then, for the first term on the right-hand side, 
		\begin{align*}
			&\E^* \left[ \int_0^T \lb (m_h^*(t) - m^*(t)) \times D^2 m_h^*(t), \varphi(t) \rho_{w_2} \rb_{\L^2} \ dt \right] \\
			&\leq \E^* \left[ \int_0^T |(m_h^*(t) - m^*(t)) \rho_{w_2} \times \varphi(t)|_{\L^2} \ |D^2m_h^*(t)|_{\L^2} \ dt \right] \\
			&\leq \E^* \left[ \int_0^T |(m_h^*(t) -m^*(t)) \rho_{w_2}^\frac{1}{2}|_{\L^4} \ |\varphi(t) \rho_{w_2}^\frac{1}{2}|_{\L^4}\ |D^2m_h^*(t)|_{\L^2} \ dt \right] \\
			&\leq \la \E^* \left[ \int_0^T |(m_h^*(t) - m^*(t)) \rho_{w_2}^\frac{1}{2}|_{\L^4}^4 \ dt \right] \ra^\frac{1}{4} \times \la \E^* \left[ \int_0^T |\varphi(t) \rho_{w_2}^\frac{1}{2}|_{\L^4}^4 \ dt \right] \ra^\frac{1}{4} \\
			&\quad \times \la \E^* \left[ \int_0^T |D^2 m_h^*(t)|_{\L^2}^2 \ dt \right] \ra^\frac{1}{2},
		\end{align*}
		where the first expectation in the last inequality converges to $ 0 $ by Lemma \ref{Lemma: |m*|=1 a.s., mh* to m* in Lp, Dm* in L4 cap C}(ii), the second expectation is finite as $ \varphi \in L^4(\Omega^*; \L^4(0,T; \L^4_{w_2})) $ and the final expectation is finite by \eqref{mh* est D2mh* L2([0,T],L2)}. 
		Thus, the left-hand side converges to $ 0 $ as $ h \to 0 $. 		
		Also, $ m^* \times \varphi \in L^2(\Omega^*; L^2(0,T; \L^2_{w_2})) $ and then by the weak convergence \eqref{weak conv D2mh* L2w}, 
		\begin{align*}
			\lim_{h \to 0} \E^* \left[ \int_0^T  \langle m^*(t) \times (D^2m_h^*(t) - D^2m^*(t)), \varphi(t) \rangle_{\L^2_{w_2}} \ dt \right] = 0.
		\end{align*}
		Therefore, 
		\begin{align*}
			\lim_{h \to 0} \E^* \left[ \int_0^T \left| \langle m_h^*(t) \times D^2 m_h^*(t) - m^*(t) \times D^2 m^*(t), \varphi(t) \rangle_{\L^2_{w_2}} \right| dt \right] = 0.
		\end{align*}

		\underline{Part (vi).} 
		Similarly, 
		\begin{align*}
			&\lb m_h^* \times \la m_h^* \times D^2m_h^* \ra - m^* \times \la m^* \times D^2m^* \ra, \varphi \rb_{\L^2_{w_2}} \\
			&= \langle (m_h^* - m^*) \times \la m_h^* \times D^2 m_h^* \ra, \varphi \rangle_{\L^2_{w_2}} 
			+ \langle m^* \times (m_h^* \times D^2m_h^* - m^* \times D^2m^*), \varphi \rangle_{\L^2_{w_2}}.
		\end{align*}
		Then, 
		\begin{align*}
			&\E^* \left[ \int_0^T \langle (m_h^*(t) - m^*(t)) \times \la m_h^*(t) \times D^2 m_h^*(t) \ra, \varphi(t) \rangle_{\L^2_{w_2}} \ dt \right] \\
			&\leq \la \E^* \left[ \int_0^T |(m_h^*(t) - m^*(t)) \rho_{w_2}^\frac{1}{2}|_{\L^4}^4 \ dt \right] \ra^\frac{1}{4} \times \la \E^* \left[ \int_0^T |\varphi(t) \rho_{w_2}^\frac{1}{4}|_{\L^4}^4 \ dt \right] \ra^\frac{1}{4} \\
			&\quad \times \la \E^* \left[ \int_0^T |m_h^*(t) \times D^2 m_h^*(t) \rho_{w_2}^\frac{1}{4}|_{\L^2}^2 \ dt \right] \ra^\frac{1}{2},
		\end{align*}
		where the last line converges to $ 0 $ by Lemma \ref{Lemma: |m*|=1 a.s., mh* to m* in Lp, Dm* in L4 cap C}(ii), $ \varphi \in L^4(\Omega^*; \L^4(0,T; \L^4_{w_2})) $ and \eqref{mh* ests cross products} with \smash{$ \rho_{w_1} \geq \rho_{w_2}^\frac{1}{4} $}. 
		Also, we have
		\begin{align*}
			&\E^* \left[ \int_0^T \langle m^*(t) \times (m_h^*(t) \times D^2m_h^*(t) - m^*(t) \times D^2m^*(t)), \varphi(t) \rho_{w_2} \rangle_{\L^2} \ dt \right] \\ 
			&\leq \E^* \left[ \int_0^T  \langle m_h^*(t) \times D^2m_h^*(t) - m^*(t) \times D^2m^*(t), m^*(t) \times \varphi(t)  \rangle_{\L^2_{w_2}} \ dt \right],
		\end{align*}
		where $ m^* \times \varphi \in L^4(\Omega^*; L^4(0,T; \L^4_{w_2})) $ and thus by part (iii), the expectation in the last line above converges to $ 0 $. 		
		Therefore, 
		\begin{align*}
			\lim_{h \to 0} \E^* \left[ \int_0^T \langle m_h^* \times \la m_h^* \times D^2 m_h^* \ra - m^* \times \la m^* \times D^2 m^* \ra, \varphi \rangle_{\L^2_{w_2}}(t) \ dt \right] = 0.
		\end{align*}
	\end{proof}

	\begin{lemma}\label{Lemma: conv F, G1, G2, G}
		Assume that $ w_2 \geq 4w_1 $. 
		Recall the definitions \eqref{defn: F*,G*}, we have 
		\begin{enumerate}[label=(\roman*)]
			\item 
			$ F_{\wh{v}}(m_h^*) \rightharpoonup F(m^*) $ in $ L^2(\Omega^*; L^2(0,T; \L^2_{w_2})) $, 
			
			\item
			$ S_{\wh{\kappa}}(m_h^*) \rightharpoonup S(m^*) $ in $ L^2(\Omega^*; L^2(0,T; \L^2_{w_2})) $, 
			
			\item
			$ \kappa G(m_h^*) \to \kappa G(m^*) $ (strongly) in $ L^2(\Omega^*; L^2(0,T; \L^2_{w_1+w_2})) $.
		\end{enumerate}
	\end{lemma}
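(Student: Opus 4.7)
The strategy is to decompose each of $F_{\wh v}$, $S_{\wh\kappa}$ and $G$ into its constituent vector products and reduce every term to a convergence already established in Lemmas \ref{Lemma: strong conv m*xDm* Lw2} and \ref{Lemma: weak conv m*xD2m*, m*x(m*D2m*) Lw2}, combined with $L^\infty$-convergence of the discretized coefficients. For the latter, $v\in\mathcal{C}([0,T];H^1(\R))$ gives a uniform Hölder bound in $x$, so $\wh v\to v$ uniformly on $[0,T]\times\R$, and $\kappa^2$ is Lipschitz because $|\kappa\kappa'|_{\L^\infty}\le C_\kappa^2$ by Remark \ref{rem1}, so $\wh\kappa^2$, $(\wh\kappa^-)^2$ and $\wh{\kappa\kappa'}$ converge in $L^\infty(\R)$ to $\kappa^2$ and $\kappa\kappa'$ at rate $O(h)$.

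For part (iii), I write $\kappa G(m_h^*)=\kappa\,m_h^*\times(m_h^*\times Dm_h^*)+\gamma\kappa\,m_h^*\times Dm_h^*$. Both summands converge strongly in $L^2(\Omega^*;L^2(0,T;\L^2_{w_1+w_2}))$ by Lemma \ref{Lemma: strong conv m*xDm* Lw2} (using $\L^2_{w_2}\hookrightarrow\L^2_{w_1+w_2}$), and multiplication by the bounded kernel $\kappa$ preserves strong convergence. For part (i), I decompose
\[
F_{\wh v}(m_h^*)=-m_h^*\times D^2m_h^*-\alpha\,m_h^*\times(m_h^*\times D^2m_h^*)+\wh v\,m_h^*\times(m_h^*\times Dm_h^*)+\gamma\wh v\,m_h^*\times Dm_h^*,
\]
and apply Lemma \ref{Lemma: weak conv m*xD2m*, m*x(m*D2m*) Lw2}(v)--(vi) to the $D^2$ pieces together with Lemma \ref{Lemma: strong conv m*xDm* Lw2} to the $Dm$ pieces; the uniform $L^\infty$ convergence of $\wh v$ lets the coefficient pass through either weak or strong limits. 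For part (ii), the decomposition of $S_{\wh\kappa}$ into a linear combination (with coefficients $\tfrac12(\wh\kappa^2+(\wh\kappa^-)^2)$, $\wh\kappa^2$, $(\wh\kappa^-)^2$ and $\wh{\kappa\kappa'}$) of the six cross products handled in Lemma \ref{Lemma: weak conv m*xD2m*, m*x(m*D2m*) Lw2}(i)--(vi) yields weak convergence term-by-term, the $L^\infty$-convergent kernels passing through harmlessly; the identity $|m^*|=1$ from Lemma \ref{Lemma: |m*|=1 a.s., mh* to m* in Lp, Dm* in L4 cap C}(i) simplifies the limit to $S(m^*)$ (in particular, the term coming from $\langle m_h^*,Dm_h^*\rangle\,m_h^*\times Dm_h^*$ vanishes in the limit by part (iv) of that lemma).

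The main subtlety will be reconciling the weak convergences supplied by Lemma \ref{Lemma: weak conv m*xD2m*, m*x(m*D2m*) Lw2}, which are formulated against $L^4$ test functions in $L^4(\Omega^*;L^4(0,T;\L^4_{w_2}))$, with the weak-$L^2$ statement asserted here. This is resolved by noting that each constituent of $F_{\wh v}(m_h^*)$ and $S_{\wh\kappa}(m_h^*)$ is uniformly bounded in $L^2(\Omega^*;L^2(0,T;\L^2_{w_2}))$ via the estimates \eqref{mh* est C([0,T],Lw1)}--\eqref{mh* est Dm L4} and the cross-product bounds \eqref{mh* ests cross products}, so a standard density argument (using that $L^4(\Omega^*;L^4(0,T;\L^4_{w_2}))$ is dense in $L^2(\Omega^*;L^2(0,T;\L^2_{w_2}))$) upgrades the convergences against $L^4$ test functions to weak $L^2$ convergence.
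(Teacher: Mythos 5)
Your proposal is correct and follows essentially the same route as the paper: decompose $F_{\wh v}$, $S_{\wh\kappa}$ and $G$ into the cross-product building blocks, apply Lemmas \ref{Lemma: strong conv m*xDm* Lw2} and \ref{Lemma: weak conv m*xD2m*, m*x(m*D2m*) Lw2} term by term, and pass the piecewise-constant coefficients to the limit (the paper uses convergence of $\wh v$, $\wh{\kappa}^2$, $\wh{\kappa\kappa'}$ in $L^4(0,T;\L^4_{w_2})$ rather than in $L^\infty$, which avoids your unproved claim that $\kappa\kappa'$ is Lipschitz — for the $L^\infty$ route you would need the $f_j''$ part of Assumption \ref{hyp1} to bound $(\kappa\kappa')'$). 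Your explicit density argument upgrading the $L^4$-tested convergences to weak convergence in $L^2(\Omega^*;L^2(0,T;\L^2_{w_2}))$ is precisely the step the paper leaves implicit, and it is valid given the uniform bounds you cite.
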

	\begin{proof}
		As in Lemma \ref{Lemma: weak conv m*xD2m*, m*x(m*D2m*) Lw2}, let $ \varphi $ be an arbitrary measurable process in $ L^4(\Omega^*; L^4(0,T; \L^4_{w_2})) $. 
		By \eqref{Ckappa bound} and \eqref{Cv bound}, $ \kappa^2, \kappa \kappa' \in \L^\infty \cap \H^1 $ and $ v \in \mathcal{C}([0,T]; \L^\infty \cap \H^1) $. Then for $ y = \kappa^2, (\kappa^2)^-, \kappa \kappa', v $, any piecewise constant approximation (in the $ x $-variable) $ z $ of $ y $ satisfies
		\begin{equation}\label{pf: yhat to y in Lw4}
			z \to y \quad \text{ in } L^4(0,T; \L^4_{w_2}). 
		\end{equation}
		For example, the approximation $ z $ can be taken to be $ \wh{y}^- $ or $ \wh{y} $. 
		Let $ u $ be a function such that $ u(m_h^*) \in L^2(\Omega^*; L^2(0,T; \L^2_{w_2})) $. Then,
		\begin{equation}\label{pf: yhatu to yu in Lw4}
		\begin{aligned}
			&\E^* \left[ \int_0^T \lb z(t) u(m_h^*(t)) - y(t) u(m^*(t)), \varphi(t) \rb_{\L^2_{w_2}} \ dt \right] \\
			&\leq \E^*\left[ \int_0^T \lb (z - y) \times u(m_h^*), \varphi \rb_{\L^2_{w_2}}(t) \ dt \right] + \E^*\left[ \int_0^T \lb y\la u(m_h^*) - u(m^*) \ra, \varphi \rb_{L^2_{w_2}}(t) \ dt \right] \\
			&\leq \la \E^* \left[ \int_0^T |z(t) - y(t)|_{\L^4_{w_2}}^4 \ dt \right] \ra^\frac{1}{4} 
			\la \E^*\left[ \int_0^T |u(m_h^*(t))|_{\L^2_{w_2}}^2 \ dt \right] \ra^\frac{1}{2} 
			\la \E^*\left[ \int_0^T |\varphi(t)|_{\L^4_{w_2}}^4 \ dt \right] \ra^\frac{1}{4} \\
			&\quad + \E^*\left[ \int_0^T \langle u(m_h^*(t)) - u(m^*(t)), y(t)\varphi(t) \rangle_{\L^2_{w_2}} \ dt  \right].
		\end{aligned}
		\end{equation}
		If $ u(m_h^*) \rightharpoonup u(m^*) $ in $ L^2(\Omega^*; L^2(0,T; \L_{w_2}^2)) $, then the right-hand side of \eqref{pf: yhatu to yu in Lw4} converges to $ 0 $ by \eqref{pf: yhat to y in Lw4} and $ y \varphi \in L^4(\Omega^*; L^4(0,T; \L^4_{w_2})) $.

		\underline{Part (i).}
		Let $ u(m_h^*) = m_h^* \times (m_h^* \times Dm_h^*) $ and let $ y = v $. 
		The result follows immediately from \eqref{pf: yhatu to yu in Lw4} and Lemma \ref{Lemma: weak conv m*xD2m*, m*x(m*D2m*) Lw2}(v) and (vi).

		\underline{Part (ii).} 
		Since $ w_2 \geq 4w_1 $, we have $ \rho_{w_2} \leq \rho_{w_1}^4 $,
		We observe that from \eqref{mh* est Dm L4} and \eqref{mh* est mrho Linfty} that
		\begin{align*}
			Dm_h^* \times (m_h^* \times Dm_h^*), \quad  
			|m_h^* \times Dm_h^*|^2 m_h^*, \quad 
			\langle m_h^*, Dm_h^* \rangle m_h^* \times Dm_h^* 
		\end{align*}
		are in $ L^2(\Omega^*; L^2(0,T; \L_{w_2}^2)) $. 
			%
			%
		Taking the following choices of $ u $, $ y $ and $ z $:
		\begin{alignat*}{4}
			u(m_h^*) &= (\gamma^2-1) m_h^* \times (m_h^* \times D^2 m_h^*) - 2 \gamma m_h^* \times D^2 m_h^*, &&\quad y = \kappa^2, &&\quad z = \frac{1}{2} \la (\wh{\kappa}^-)^2 + \wh{\kappa}^2 \ra, \\
			u(m_h^*) &= \gamma^2 Dm_h^* \times (m_h^* \times Dm_h^*) + |m_h^* \times Dm_h^*|^2 m_h^*, &&\quad y = \kappa^2, &&\quad z = \wh{\kappa}^2, \\
			u(m_h^*) &= 2 \gamma \langle m_h^*, Dm_h^* \rangle \ m_h^* \times Dm_h^*, &&\quad y = \kappa^2, &&\quad z = (\wh{\kappa}^2)^-, \\
			u(m_h^*) &= \left[ (\gamma^2-1) m_h^* \times (m_h^* \times Dm_h^*) - 2\gamma m_h^* \times Dm_h^* \right], &&\quad y = \kappa \kappa', &&\quad z = \wh{\kappa \kappa'},
		\end{alignat*}
		and using Lemma \ref{Lemma: weak conv m*xD2m*, m*x(m*D2m*) Lw2}(ii) -- (vi), we follow again the argument \eqref{pf: yhatu to yu in Lw4} to obtain weak convergence of $ S_{\wh{\kappa}}(m_h^*) $ to $ S(m^*) $ in $ L^2(\Omega^*; L^2(0,T; \L^2_{w_2})) $.

		\underline{Part (iii).} 
		The result follows from \eqref{Ckappa bound} and Lemma \ref{Lemma: strong conv m*xDm* Lw2}.
	\end{proof}

\subsubsection{Wiener process}	
	Define a sequence of processes $ \{ \ol{M}_h \}_{h > 0} $ on $ (\Omega,\mathcal{F}, \mathbb{P}) $ by
	\begin{align*}
		\ol{M}_h(t) 
		&:= \int_0^t \la G(\ol{m}^h(s))  - (\gamma P_1 + P_2)\ol{m}^h(s) \ra d\wh{W}^h(s).
	\end{align*}
	Recall the equation of $ \ol{m}^h $, we have from \eqref{SDE: m-bar with remainders} that
	\begin{align*}
		\ol{M}_h(t) &= \ol{m}^h(t) - m_0 - \int_0^t \la F_{\wh{v}}(\ol{m}^h(s))+ \frac{1}{2} S_{\wh{\kappa}}(\ol{m}^h(s)) \ra \ ds - R_0\ol{m}^h(t) - \int_0^t R^h \ol{m}^h(s)  \ ds,
	\end{align*}
	where the operator $ R^h $ is given by
	\begin{align*}
		R^h u
		&:= -\wh{v}(\gamma P_1 + P_2)u + Q_1u + \alpha Q_2u 
		+\frac{1}{4} \la (\wh{\kappa}^2)^- + \wh{\kappa}^2 \ra \left[ 2 \gamma Q_1u - (\gamma^2 -1) Q_2u \right] \\
		&\quad + \frac{1}{2}\la \wh{\kappa}^2 \la P_3 + \gamma^2 P_4 \ra u - 2 \gamma (\wh{\kappa}^2)^- P_5u \ra
		+ \frac{1}{2} \wh{\kappa \kappa'} \left[ 2\gamma P_1 - (\gamma^2-1)P_2 \right] u.
	\end{align*}
	Similarly, define a sequence of processes $ \{ M_h^* \}_{h > 0} $ on $ (\Omega^*, \mathcal{F}^*, \mathbb{P}^*) $ by
	\begin{align*}
		M_h^*(t)
		&:= m_h^*(t) - m_0 - \int_0^t \la F_{\wh{v}}(m_h^*(s)) + \frac{1}{2} S_{\wh{\kappa}}(m_h^*(s)) \ra \ ds 
		- R_0 m_h^*(t) - \int_0^t R^h m_h^*(s)  \ ds. 
	\end{align*}

	\vspace{0.5cm}
	\begin{lemma}\label{Lemma: weak conv Mh* to m* drift}
		For each $ t \in (0,T] $, we have the following weak convergence in $ L^2(\Omega^*; \H^{-1}_{w_1}) $:
		\begin{align*}
			M_h^*(t) \rightharpoonup 
			M^*(t) := m^*(t) - m_0 - \int_0^t \la F(m^*(s))+ \frac{1}{2} S(m^*(s)) \ra \ ds.
		\end{align*}
	\end{lemma}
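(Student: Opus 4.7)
The plan is to identify the limit term-by-term in the explicit decomposition
\[
M_h^*(t)=m_h^*(t)-m_0-\int_0^t\!\!\Bigl(F_{\wh v}(m_h^*)+\tfrac12 S_{\wh\kappa}(m_h^*)\Bigr)ds-R_0 m_h^*(t)-\int_0^t R^h m_h^*(s)\,ds,
\]
and show weak convergence in $L^2(\Omega^*;\H_{w_1}^{-1})$ by pairing with an arbitrary test function $\psi\in L^2(\Omega^*;\H_{w_1}^1)$. Since $w_1<w_2$, Sobolev embedding gives $\H_{w_1}^1\hookrightarrow \L_{w_2}^4$, so $\mathbbm 1_{[0,t]}\psi$ serves as an admissible test in $L^4(\Omega^*;L^4(0,T;\L_{w_2}^4))$ for every weak-convergence statement proved in Lemma~\ref{Lemma: conv F, G1, G2, G} and Lemma~\ref{Lemma: interpolation 2 remainders} (transferred from $\ol m^h$ to $m_h^*$ via the equality of laws used in Remark~\ref{Remark: borel sets B vs E}).

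First I would treat the three principal pieces. The convergence $m_h^*(t)\rightharpoonup m^*(t)$ in $L^2(\Omega^*;\H_{w_1}^{-1})$ follows from the $\P^*$-a.s.\ pointwise convergence in $\mathcal C([0,T];\H_{w_1}^{-1})$ of Lemma~\ref{Lemma: Skorohod} together with the uniform moment bounds \eqref{mh* est C([0,T],Lw1)}--\eqref{mh* est Dmh* C([0,T],L2)}, via Vitali. For the drift integrals, I write
\[
\E^*\Bigl[\bigl\langle\textstyle\int_0^t F_{\wh v}(m_h^*(s))\,ds,\psi\bigr\rangle\Bigr]=\E^*\Bigl[\textstyle\int_0^T\langle F_{\wh v}(m_h^*(s)),\mathbbm 1_{[0,t]}(s)\psi\rangle_{\L_{w_2}^2}\,ds\Bigr]
\]
and apply Lemma~\ref{Lemma: conv F, G1, G2, G}(i); the same argument with Lemma~\ref{Lemma: conv F, G1, G2, G}(ii) handles the $S_{\wh\kappa}$ term.

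Second, the two interpolation residuals must go to zero. For $R_0 m_h^*(t)$, Lemma~\ref{Lemma: interpolation 2 remainders} (applied to $m_h^*$ in place of $\ol m^h$) gives $R_0m_h^*\to 0$ in $L^2(\Omega^*;L^\infty(0,T;\L^2))$, hence strong (in particular weak) convergence to $0$ at fixed $t$. For $\int_0^t R^h m_h^*(s)\,ds$, I expand $R^h$ according to the definition and pair with $\psi$. The $P_1,P_2$ contributions are controlled by strong $L^2$ convergence and Cauchy--Schwarz. Each remaining contribution $\wh v\,P_j m_h^*$, $\wh\kappa^2 Q_j m_h^*$, $\wh\kappa^2 P_j m_h^*$ or $\wh{\kappa\kappa'}P_j m_h^*$ is of the form $y(t,x)\,f m_h^*$ with $y\in L^\infty\cap L^2(0,T;\H^1)$; then $y\,\psi\in L^4(\Omega^*;L^4(0,T;\L_{w_2}^4))$ (using \eqref{Ckappa bound}, \eqref{Cv bound} and the aforementioned embedding), so the weak-convergence statements for $P_3,P_4,P_5,Q_1,Q_2$ in Lemma~\ref{Lemma: interpolation 2 remainders} apply directly.

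The main technical nuisance is book-keeping: ensuring that each piece of $R^h$ can be tested against a function in $L^4(\Omega^*;L^4(0,T;\L_{w_2}^4))$, which requires the embedding $\H_{w_1}^1\hookrightarrow \L_{w_2}^4$ and the $\L^\infty$-type bounds for $\wh v,\wh\kappa^2,\wh{\kappa\kappa'}$; and making sure we can replace $\wh v,\wh\kappa^2,\wh{\kappa\kappa'}$ by their continuous counterparts when passing to the limit, which is precisely the idea used in \eqref{pf: yhat to y in Lw4}--\eqref{pf: yhatu to yu in Lw4}. Once these ingredients are in place, summing the pointwise-$\psi$ convergences and using dominated convergence (justified by the uniform bounds \eqref{mh* est C([0,T],Lw1)}--\eqref{mh* est Dm L4}) yields the asserted weak convergence of $M_h^*(t)$ to $M^*(t)$ in $L^2(\Omega^*;\H_{w_1}^{-1})$.
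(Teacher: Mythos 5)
Your proposal is correct and follows essentially the same route as the paper: decompose $M_h^*(t)$ as in its definition, test against $\varphi\in L^2(\Omega^*;\H^1_{w_1})$, transfer the remainder estimates from $\ol m^h$ to $m_h^*$ via equality of laws and Lemma \ref{Lemma: interpolation 2 remainders}, and handle the principal terms via the pointwise convergence \eqref{ptw conv: mh* L2(H1) + C(Xw-1)} and Lemma \ref{Lemma: conv F, G1, G2, G}(i)--(ii). Your explicit verification that $\H^1_{w_1}$ test functions are admissible for the $L^4(\Omega^*;L^4(0,T;\L^4_{w_2}))$ hypotheses of the convergence lemmas is a detail the paper leaves implicit, but the substance of the argument is the same.
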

	\begin{proof}
		Recall that $ \H_{w_1}^1 $ is compactly embedded in $ \L^2_{w_2} $. 
		Let $ t \in (0,T] $ and $ \varphi \in L^2(\Omega^*; \H_{w_1}^1) $. 	
		By Remark \ref{Remark: borel sets B vs E}, the two sets of remainders 
		\begin{align*}
			&\{ R_0 \ol{m}^h, R_1 \ol{m}^h, P_1\ol{m}^h, \ldots, P_5\ol{m}^h, Q_1\ol{m}^h, Q_2\ol{m}^h \}, \\
			&\{ R_0 m_h^*, R_1 m_h^*, P_1m_h^*, \ldots, P_5m_h^*, Q_1m_h^*, Q_2m_h^* \},
		\end{align*}
		have the same laws for $ \ol{m}^h, m_h^* \in L^2(0,T; \L^2_{w_1} \cap \mathring{\H}^2) \cap \mathcal{C}([0,T]; \L^2_{w_1} \cap \mathring{\H}^1) $. 
		Then, by Lemma \ref{Lemma: interpolation 2 remainders}, 
		\begin{align*}
			&\lim_{h \to 0} \E^* \left[ _{\H^{-1}_{w_1}} \lb R_0m_h^*(t) +\int_0^t R^hm_h^*(s) \ ds, \varphi \rb_{\H^1_{w_1}} \right] \\
			&= \lim_{h \to 0} \E^*\left[ \lb R_0m_h^*(t), \varphi \rb_{\L^2_{w_2}} + \int_0^t \lb R^h m_h^*(s), \varphi \rb_{\L^2_{w_2}} \ ds \right] = 0.
		\end{align*}
		By the pointwise convergence \eqref{ptw conv: mh* L2(H1) + C(Xw-1)} of $ m_h^* $ in $ \mathcal{C}([0,T]; \H^{-1}_{w_1}) $ and Lemma \ref{Lemma: conv F, G1, G2, G}(i) -- (ii), 
		\begin{align*}
			&\lim_{h \to 0} \E^*\left[ _{\H^{-1}_{w_1}} \lb M_h^*(t), \varphi \rb_{\H^1_{w_1}} \right] \\
			&= \lim_{h \to 0} \E^*\left[ _{\H^{-1}_{w_1}} \lb m_h^*(t)-m_0, \varphi \rb_{\H^1_{w_1}} - \int_0^t \lb F_{\wh{v}}(m_h^*(s)) + \frac{1}{2} S_{\wh{\kappa}}(m_h^*(s)), \varphi \rb_{\L^2_{w_2}} ds \right] \\
			&\quad - \lim_{h \to 0} \E^*\left[ _{\H^{-1}_{w_1}} \lb R_0 m_h^*(t) + \int_0^t R^h m_h^*(s), \varphi \rb_{\H^1_{w_1}}  \right] \\
			&= \E^*\left[ _{\H^{-1}_{w_1}} \lb M^*(t), \varphi \rb_{\H^1_{w_1}} \right]. 
		\end{align*}
	\end{proof}
	
	\begin{lemma}\label{Lemma: W* is Wiener process}
		The process $ W^* $ is a $ Q $-Wiener process on $ (\Omega^*, \mathcal{F}^*, \mathbb{P}^*) $, and $ W^*(t) - W^*(s) $ is independent of the $ \sigma $-algebra generated by $ m^*(r) $ and $ W^*(r) $ for $ r \in [0,s] $.  
	\end{lemma}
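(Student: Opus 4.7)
The plan is to exploit the equality of laws $\mathcal{L}(\ol m^h,W) = \mathcal{L}(m_h^*,W_h^*)$ from Lemma \ref{Lemma: Skorohod} and then pass to the limit via characteristic functions, using the almost sure convergence $W_h^*\to W^*$ in $\mathcal{C}([0,T];H^2(\R))$.

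First I would observe that since $W$ is a $Q$-Wiener process on $(\Omega,\mathcal{F},\P)$ and the law of $W_h^*$ on $\mathcal{C}([0,T];H^2(\R))$ coincides with that of $W$, each $W_h^*$ is itself a $Q$-Wiener process on $(\Omega^*,\mathcal{F}^*,\P^*)$. In particular, $W_h^*(0)=0$ a.s., $W_h^*$ has continuous $H^2(\R)$-valued paths, and for any $0\le s<t$ the increment $W_h^*(t)-W_h^*(s)$ is centered Gaussian with covariance $(t-s)Q$. To conclude that $W^*$ is a $Q$-Wiener process, I would pass to the limit in characteristic functions: for every $\phi\in H^2(\R)$,
\begin{equation*}
\E^*\!\left[\exp\!\la i\,_{H^2}\lb \phi, W_h^*(t)-W_h^*(s)\rb_{H^2}\ra\right]
= \exp\!\la -\tfrac{1}{2}(t-s)\lb Q\phi,\phi\rb_{H^2}\ra,
\end{equation*}
and the a.s. convergence \eqref{ptw conv: Wh} combined with bounded convergence gives the same identity for $W^*$. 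Continuity of paths and $W^*(0)=0$ transfer directly from the a.s. convergence in $\mathcal{C}([0,T];H^2(\R))$.

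Second, to establish independence of the increment $W^*(t)-W^*(s)$ from the past generated by $m^*$ and $W^*$, I would fix $0\le r_1<\dots<r_k\le s<t$ and bounded continuous functions $\phi:H^2(\R)\to\R$ and $\psi:(\H_{w_1}^{-1}\times H^2(\R))^k\to\R$. For each $h$, since $\ol m^h$ is adapted to the filtration generated by $W$ (as the solution of the semi-discrete SDE \eqref{SDE: discrete} driven by $W$, followed by the deterministic interpolation \eqref{interpolation 2}), the increment $W(t)-W(s)$ is independent of $\sigma(\ol m^h(r_j),W(r_j):j\le k)$. Transferring this independence via $\mathcal{L}(\ol m^h,W)=\mathcal{L}(m_h^*,W_h^*)$ yields
\begin{equation*}
\begin{aligned}
&\E^*\!\left[\phi(W_h^*(t)-W_h^*(s))\,\psi\la m_h^*(r_j),W_h^*(r_j)\ra_{j\le k}\right]\\
&\qquad= \E^*\!\left[\phi(W_h^*(t)-W_h^*(s))\right]\,\E^*\!\left[\psi\la m_h^*(r_j),W_h^*(r_j)\ra_{j\le k}\right].
\end{aligned}
\end{equation*}
Using the a.s. convergence of $W_h^*$ in $\mathcal{C}([0,T];H^2(\R))$ and of $m_h^*$ in $\mathcal{C}([0,T];\H_{w_1}^{-1})$ from \eqref{ptw conv: mh* L2(H1) + C(Xw-1)} and \eqref{ptw conv: Wh}, every time-evaluation converges a.s. in the respective space, so by continuity of $\phi,\psi$ and bounded convergence I can pass to the limit on both sides to obtain
\begin{equation*}
\E^*\!\left[\phi(W^*(t)-W^*(s))\,\psi(m^*(r_j),W^*(r_j))_{j\le k}\right]
=\E^*\!\left[\phi(W^*(t)-W^*(s))\right]\E^*\!\left[\psi(m^*(r_j),W^*(r_j))_{j\le k}\right].
\end{equation*}
Since the family of such $\psi$ generates the $\sigma$-algebra $\sigma(m^*(r),W^*(r):r\le s)$, the required independence follows.

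The step I expect to be the most delicate is verifying that evaluations at fixed times make sense and are continuous with respect to the topologies of the Skorokhod limits. For $W_h^*$ this is immediate because convergence is in $\mathcal{C}([0,T];H^2(\R))$; for $m_h^*$ one only has convergence in $\mathcal{C}([0,T];\H_{w_1}^{-1})$, so the test function $\psi$ must be taken continuous on $\H_{w_1}^{-1}$ rather than on a stronger space, which is precisely the setting used above. Once this point is handled, the argument is a routine application of the Skorokhod representation together with the invariance of laws stated in Lemma \ref{Lemma: Skorohod} and Remark \ref{Remark: borel sets B vs E}.
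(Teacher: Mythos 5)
Your argument is correct and is essentially the standard Skorokhod-transfer argument that the paper itself invokes only by citation (the paper's proof is just a reference to \cite[Lemma 5.2(i)]{Brzezniak2013_sLLG}): transfer the Wiener property and the independence of $W(t)-W(s)$ from $\sigma(\ol{m}^h(r_j),W(r_j))$ via equality of laws, then pass to the limit using the a.s. convergences in $\mathcal{C}([0,T];H^2(\R))$ and $\mathcal{C}([0,T];\H^{-1}_{w_1})$ together with bounded convergence and a monotone class argument. The only cosmetic remark is that independence of increments of $W^*$ itself (needed for the $Q$-Wiener property, beyond the Gaussian marginals you compute) is already contained in your second step by taking $\psi$ to depend only on the $W^*(r_j)$.
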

	\begin{proof}
		See \cite[Lemma 5.2(i)]{Brzezniak2013_sLLG} (using Lemma \ref{Lemma: Skorohod}). 
	\end{proof}
	
	\begin{lemma}\label{Lemma: M*=stochastic int}
		For each $ t \in [0,T] $, 
		\begin{align*}
			M^*(t) = \int_0^t G(m^*(s)) \ dW^*(s).
		\end{align*}
	\end{lemma}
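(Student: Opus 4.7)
The plan is to combine three ingredients: (i) a stochastic-integral representation of $M_h^*$ transferred from the original probability space via equality in law, (ii) a convergence analysis of this representation as $h \to 0$, and (iii) uniqueness of the weak limit guaranteed by Lemma \ref{Lemma: weak conv Mh* to m* drift}. The conclusion is that the two candidate expressions for $M^*(t)$ agree.

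First I would use Lemma \ref{Lemma: Skorohod}: since $(\ol m^h, W)$ and $(m_h^*, W_h^*)$ have the same law on $E \times \mathcal{C}([0,T]; H^2(\R))$ and $\ol M_h$ is a measurable functional of $(\ol m^h, W)$, one obtains on the new space a representation of the form
\[
	M_h^*(t) = \int_0^t \la G(m_h^*(s)) - (\gamma P_1 + P_2) m_h^*(s) \ra d\wh W_h^*(s),
\]
where $\wh W_h^*$ is the piecewise-constant-in-$x$ approximation of $W_h^*$ analogous to $\wh W^h$; the filtration and Wiener-process property of $W_h^*$ on the new space are handled exactly as in Lemma \ref{Lemma: W* is Wiener process} (compare \cite[Lemma 5.2]{Brzezniak2013_sLLG}). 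This reduces the task to passing to the limit in the stochastic integral.

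Next I would dispose of the remainder term. It\^o's isometry together with Assumption \ref{hyp1} gives
\[
	\E^*\Bigl[\bigl| \textstyle\int_0^t (\gamma P_1 + P_2) m_h^*(s)\, d\wh W_h^*(s) \bigr|_{\L^2}^2 \Bigr]
	\leq (1+|\gamma|)^2 C_\kappa^2 \, \E^*\Bigl[\int_0^t |(P_1+P_2) m_h^*(s)|_{\L^2}^2\, ds \Bigr],
\]
and the right-hand side tends to zero by transferring Lemma \ref{Lemma: interpolation 2 remainders} via equality in law (Remark \ref{Remark: borel sets B vs E}). For the main stochastic integral, I would show that $\int_0^t G(m_h^*(s))\, d\wh W_h^*(s) \to \int_0^t G(m^*(s))\, dW^*(s)$ in probability in a suitable weighted space. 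Writing the difference as
\[
	\int_0^t \la G(m_h^*(s)) - G(m^*(s)) \ra d\wh W_h^*(s) + \int_0^t G(m^*(s))\, d(\wh W_h^* - W^*)(s),
\]
the first term is controlled in $\L^2_{w_1+w_2}$ via It\^o's isometry and Lemma \ref{Lemma: conv F, G1, G2, G}(iii), which provides the strong convergence $\kappa G(m_h^*) \to \kappa G(m^*)$ in $L^2(\Omega^*; L^2(0,T; \L^2_{w_1+w_2}))$. The second term I would treat by approximating $G(m^*)$ by simple adapted step processes and exploiting the convergence $\wh W_h^* \to W^*$ in $\mathcal{C}([0,T]; H^2(\R))$ (inherited from \eqref{ptw conv: Wh} together with $\wh f_j \to f_j$). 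Combining these two steps yields $M_h^*(t) \to \int_0^t G(m^*(s))\, dW^*(s)$ in probability in $\H^{-1}_{w_1}$, and comparison with the weak $L^2(\Omega^*; \H^{-1}_{w_1})$-limit in Lemma \ref{Lemma: weak conv Mh* to m* drift} closes the argument.

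The main obstacle I anticipate is the convergence of stochastic integrals under a change of both the integrand and the integrator, specifically the term $\int_0^t G(m^*)\, d(\wh W_h^* - W^*)$. The It\^o integral is filtration-dependent, so the uniform convergence $\wh W_h^* \to W^*$ does not by itself transfer to the integrals; one must carefully align the filtrations on the new probability space. The standard remedy, also used in \cite{Brzezniak2013_sLLG}, is to reduce via Doob's maximal inequality to simple adapted integrands and exploit the joint Skorohod construction to identify the limiting martingale.
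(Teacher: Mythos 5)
Your proposal is correct and follows essentially the same route as the paper: transfer the representation $M_h^*(t)=\int_0^t\la G(m_h^*)-(\gamma P_1+P_2)m_h^*\ra d\wh W_h^*$ to the new space via equality in law (the paper makes this rigorous exactly by the Riemann-sum comparison you flag as the "standard remedy"), then pass to the limit using Lemma \ref{Lemma: conv F, G1, G2, G}(iii), Lemma \ref{Lemma: interpolation 2 remainders} for the remainders, step-function approximation of the integrand together with the uniform convergence of $\wh W_h^*$, and finally identify the limit with $M^*(t)$ through Lemma \ref{Lemma: weak conv Mh* to m* drift} and uniqueness of weak limits. The only difference is bookkeeping: the paper organizes the limit passage as a three-term split $J_0^h+J_1^h+J_2$ anchored on a fixed partition rather than your two-term split followed by simple-process approximation.
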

	\begin{proof}
		Fix $ h $ and $ t \in (0,T] $. 
		For each $ n \in {\b N} $, define the partition $ \{ s_i^n = \frac{iT}{n} : i = 0, \ldots, n \} $. Define 
		\begin{align*}
			\delta\wh{W}^h(t,s_i^n) &:= \wh{W}^h(t \wedge s_{i+1}^n) - \wh{W}^h(t \wedge s_i^n), \\
			\delta\wh{W}_h^*(t,s_i^n) &:= \wh{W}_h^*(t \wedge s_{i+1}^n) - \wh{W}_h^*(t \wedge s_i^n), \\
			\delta W^*(t,s_i^n) &:= W^*(t \wedge s_{i+1}^n) - W^*(t \wedge s_i^n),
		\end{align*}
		where $ \wh{W}_h^*(s) $ is the piecewise constant approximation of $ W_h^*(s) $ (as in \eqref{interpolation 1}) for every $ s \in [0,T] $. 
		As in \eqref{ptw conv: Wh}, we also have
		\begin{equation}\label{ptw conv: Wh*hat} 
			\wh{W}_h^* \to W^* \text{ in } \mathcal{C}([0,T]; L^2(\R)), \quad \P^*\text{-a.s.}
		\end{equation}
		Consider the following two $ \L^2_{w_2} $-valued random variables:
		\begin{align*}
			\ol{Y}_{h,n}(t) &:= \ol{M}_h(t) - \sum_{i=0}^{n-1} \la G(\ol{m}^h(s_i^n)) - \la \gamma P_1 + P_2 \ra \ol{m}^h(s_i^n) \ra \delta\wh{W}^h(t,s_i^n), \\
			Y^*_{h,n}(t) &:= M_h^*(t) - \sum_{i=0}^{n-1} \la G(m_h^*(s_i^n)) - \la \gamma P_1 + P_2 \ra m_h^*(s_i^n) \ra \delta\wh{W}_h^*(t,s_i^n).
		\end{align*}
		Following Remark \ref{Remark: borel sets B vs E}, $ \ol{Y}_{h,n} $ and $ Y^*_{h,n} $ have the same distribution. 
		As $ n \to \infty $, 
		\begin{align*}
			\ol{Y}_{h,n}(t) \to \ol{M}_h(t) - \int_0^t \la G(\ol{m}^h(s)) - \la \gamma P_1 + P_2 \ra \ol{m}^h(s) \ra \ d\wh{W}^h(s) = 0 \text{ in } L^2(\Omega; \L^2_{w_2}).
		\end{align*}
		This implies that $ Y^*_{h,n}(t) $ also converges to $ 0 $ in $ L^2(\Omega^*; \L^2_{w_2}) $ as $ n \to \infty $. Thus,
		\begin{align*}
			M_h^*(t) = \int_0^t \la G(m_h^*(s)) -  \la \gamma P_1 + P_2 \ra m_h^*(s) \ra \ d\wh{W}_h^*(s), \quad \P^*\text{-a.s.}
		\end{align*}
		
		We observe that
		\begin{align*}
			&\E^*\left[ M_h^*(t) - \int_0^t G(m^*(s)) \ dW^*(s) \right] \\
			&= \E^*\left[ \int_0^t \la G(m_h^*(s)) - (\gamma P_1 + P_2)m_h^*(s) - \sum_{i=0}^{n-1} G(m_h^*(s_i^n)) \mathbbm{1}_{(s_i^n, s_{i+1}^n]}(s) \ra \ d\wh{W}_h^*(s)  \right] \\
			&\quad + \E^*\left[ \sum_{i=0}^{n-1} G(m_h^*(s_i^n)) \ \delta\wh{W}_h^*(t,s_i^n)- \sum_{i=0}^{n-1} G(m^*(s_i^n)) \ \delta W^*(t \wedge s_i^n) \right] \\
			&\quad + \E^*\left[ \int_0^t \la \sum_{i=0}^{n-1} G(m^*(s_i^n)) \mathbbm{1}_{(s_i^n,s_{i+1}^n]}(s) - G(m^*(s)) \ra \ dW^*(s) \right] \\
			&= \E^*[J_0^h] + \E^*[J_1^h] + \E^*[J_2].
		\end{align*}
		
		For $ J_0^h $ and $ J_2 $, let $ \epsilon > 0 $ and choose $ n \in {\b N} $ such that 
		\begin{equation}\label{pf: epsilon for G(s) vs G(sn)}
			\la \E^*\left[ \int_0^t \left|G(m^*(s)) - \sum_{i=0}^{n-1}G(m^*(s_i^n)) \mathbbm{1}_{(s_i^n, s_{i+1}^n]}(s) \right|_{\H^{-1}_{w_1}}^2 \ ds \right] \ra^\frac{1}{2} < \frac{\epsilon}{2}.
		\end{equation}
		Since $ \wh{W}_h^* $ and $ \wh{W}^h $ have the same laws on $ \mathcal{C}([0,T]; H^2(\R)) $, we have
		\begin{align*}
			\la \E^*\left[ |J_0^h|_{\H^{-1}_{w_1}}^2 \right] \ra^\frac{1}{2} 
			&\leq \la \E^* \left[ \int_0^t \sum_j q_j^2 \left| \wh{f}_j G(m_h^*(s)) - \wh{f}_j G(m^*(s)) \right|_{\H^{-1}_{w_1}}^2 ds \right] \ra^\frac{1}{2} \\
			&\quad + \la \E^*\left[ \int_0^t \sum_j q_j^2 \left| \wh{f}_j \la G(m^*(s)) - \sum_{i=0}^{n-1}G(m^*(s_i^n)) \mathbbm{1}_{(s_i^n, s_{i+1}^n]}(s) \ra \right|_{\H^{-1}_{w_1}}^2 ds \right] \ra^\frac{1}{2} \\
			&\quad + \la \E^*\left[ \int_0^t \sum_j q_j^2 \left| \sum_{i=0}^{n-1} \la \wh{f}_j G(m^*(s_i^n)) - \wh{f}_j G(m_h^*(s_i^n)) \ra \mathbbm{1}_{(s_i^n, s_{i+1}^n]}(s) \right|_{\H^{-1}_{w_1}}^2 ds \right] \ra^\frac{1}{2} \\
			&\quad + \la \E^*\left[ \int_0^t \sum_j q_j^2 \left| \wh{f}_j (\gamma P_1 + P_2) m_h^*(s) \right|_{\H^{-1}_{w_1}}^2 \ ds \right] \ra^\frac{1}{2}. 
		\end{align*}
		Recall that $ \L^2_w \hookrightarrow \H^{-1}_{w_1} $ for all $ w > w_1 $. 
		Let $ w = w_1+w_2 $. 
		As $ h \to 0 $, the first and the third term on the right-hand side converges to $ 0 $ by Lemma \ref{Lemma: conv F, G1, G2, G}(iii), the fourth term converges to $ 0 $ by Lemma \ref{Lemma: interpolation 2 remainders} and \eqref{Ckappa bound}, and the second term is less than $ \frac{\epsilon}{2} $ by \eqref{pf: epsilon for G(s) vs G(sn)}. 
		Hence, for a sufficiently small $ h $, we have
		\begin{align*}
			|J_0^h|_{L^2(\Omega^*; \H_{w_1}^{-1})} < \frac{\epsilon}{2}.
		\end{align*}
		Similarly, 
 		$ |J_2|_{L^2(\Omega^*; \H_{w_1}^{-1})} < \frac{\epsilon}{2} $. 
		
		For $ J_1^h $, we have
		\begin{align*}
			\E^*\left[ |J_1^h|_{\H^{-1}_{w_1}}^2 \right] 
			&\leq \E^*\left[ \left| \sum_{i=0}^{n-1} \la G(m_h^*(s_i^n)) - G(m^*(s_i^n)) \ra \delta W^*(t,s_i^n)\right|_{\H^{-1}_{w_1}}^2 \right] \\
			&\quad + \E^*\left[ \left| \sum_{i=0}^{n-1} G(m_h^*(s_i^n)) \la \delta\wh{W}_h^*(t,s_i^n)- \delta W^*(t,s_i^n) \ra \right|_{\H^{-1}_{w_1}}^2 \right].
		\end{align*}
		Since $ W^* $ is a $ Q $-Wiener process, the first term on the right-hand side converges to $ 0 $ by Lemma \ref{Lemma: conv F, G1, G2, G}(iii), 
		Also, the second term converges to $ 0 $ by the pointwise convergence \eqref{ptw conv: Wh} (or \eqref{ptw conv: Wh*hat}) and the result $ G(m_h^*) \rho_{\frac{w}{2}} \in L^2(\Omega^*; L^2(0,T;\L^\infty)) $, which can be deduced from the estimates \eqref{mh* est Dmh* C([0,T],L2)}, \eqref{mh* est D2mh* L2([0,T],L2)} and \eqref{mh* est mrho Linfty}.
		
		Therefore, for any sufficiently small $ h $, 
		\begin{align*}
			\E^*\left[ \left| M_h^*(t) - \int_0^t G(m^*(s)) \ dW^*(s) \right|_{\H^{-1}_{w_1}}^2 \right] < \epsilon^2.
		\end{align*}
		Using Lemma \ref{Lemma: weak conv Mh* to m* drift} and the uniqueness of weak limit, the proof is concluded.  
	\end{proof}
	We are ready to prove the main theorem.

\subsubsection{Proof of Theorem~\ref{Theorem: existence}}
	By Lemmata \ref{Lemma: weak conv Mh* to m* drift} and \ref{Lemma: M*=stochastic int}, $ m^* $ satisfies the \eqref{sLLS_mainthm} in $ \H^{-1}_{w_1} $. 
	Moreover, using Lemma \ref{Lemma: |m*|=1 a.s., mh* to m* in Lp, Dm* in L4 cap C}(i), we can simplify $ F, S $ and $ G $:
	\begin{align*}
		F(m^*) &= -v \la Dm^*+ \gamma m^* \times Dm^*\ra - m^* \times D^2m^* + \alpha D^2m^* + \alpha |Dm^*|^2m^*, \\
		S(m^*) 
		&= \kappa^2 \la (1-\gamma^2) D^2 m^* - 2\gamma^2 |Dm^*|^2 m^* -2 \gamma m^* \times D^2m^* \ra \\
		&\quad +\kappa \kappa' \la (1-\gamma^2) Dm^* - 2\gamma m^* \times Dm^* \ra, \\
		G(m^*) &= -Dm^* + \gamma m^* \times Dm^*, 
	\end{align*}
	and each of them is in $ L^2(\Omega^*; L^2(0,T; \L^2)) $, 
	hence the equality \eqref{sLLS_mainthm} holds in $ \L^2 $. 
	Recall the properties of $ m^* $ shown previously in \eqref{weak conv D2mh* L2} and Lemma \ref{Lemma: |m*|=1 a.s., mh* to m* in Lp, Dm* in L4 cap C}, we have now verified that $ m^* $ is a solution of \eqref{eq_ito} in the sense of Definition \ref{def: sol}. 
	It only remains to show that $ m-m_0 \in \mathcal{C}^\alpha([0,T]; \L^2) $. 
	For $ s,t \in [0,T] $ and $ p \in [1,\infty) $, there exists a constant $ C $ that may depend on $ p,T,C_{\kappa} $ such that
	\begin{align*}
		&\E^* \left[ |m^*(t)-m^*(s)|_{\L^2}^{2p} \right] \\
		&\leq |t-s|^p \E^* \left[ \la \int_s^t \left|F(m^*(r)) + \frac{1}{2}S(m^*(r)) \right|_{\L^2}^2 \ dr \ra^p \right] \\
		&\quad + \E^* \left[ \la \int_s^t \sum_j q_j^2 |f_j G(m^*)(r)|^2_{\L^2} \ dr \ra^p \right] \\
		&\leq C |t-s|^p \E^* \left[ \sup_{r \in [0,T]} |Dm^*(r)|^{2p}_{\L^2} + \la\int_s^t \la |Dm^*(r)|^4_{\L^4} + |D^2m^*(r)|^2_{\L^2}\ra \ dr \ra^p \right],
	\end{align*}
	where the expectation on the right-hand side is finite. 
	Then by Kolmogorov's continuity criterion, $ m^*(t)-m_0 \in \mathcal{C}^\alpha([0,T]; \L^2) $, $ \P^* $-a.s. for $ \alpha \in (0,\frac{1}{2}) $. 
	
\subsubsection{Proof of Theorem \ref{Thm: pathwise uniqueness}}
	Let $ (m_1,W) $ and $ (m_2,W) $ on $ (\Omega,\mathcal F,\la\mathcal F_t\ra,\P) $ be two solutions of \eqref{eq_ito} in the sense of Definition \ref{def: sol}. 
	Let $ u = m_1 - m_2 $ and $ w>0 $. 
	Applying It{\^o}'s lemma to $ \frac{1}{2}|u(t)|_{\L^2_w}^2 $,
	\begin{equation}\label{d|m1-m2|_Lw2}
	\begin{aligned}
		\frac{1}{2} |u(t)|^2_{\L^2_w} 
		&= |u(0)|^2_{\L^2_w} + \int_0^t \lb u(s), F(m_1(s)) - F(m_2(s)) \rb_{\L^2_w} \ dt \\
		&\quad + \frac{1}{2} \int_0^t \lb u(s), S(m_1(s))-S(m_2(s)) \rb_{\L^2_w} \ dt \\
		&\quad + \frac{1}{2} \int_0^t \sum_j q_j^2 \left|f_j (G(m_1(s)) - G(m_2(s))) \right|_{\L^2_w}^2 \ dt \\
		&\quad + \int_0^t \lb u(s), \la G(m_1(s)) - G(m_2(s)) \ra \ dW(s) \rb_{\L^2_w}  \\
		&= |u(0)|^2_{\L^2_w}+ \int_0^t \left[ U_1(s) + U_2(s) + U_3(s) \right] \ ds + U_4(t), 
	\end{aligned}
	\end{equation}	

	\underline{An estimate on $ U_1 $:}
	\begin{align*}
		U_1(s) 
		&= \lb u, F(m_1) - F(m_2) \rb_{\L^2_w}(s) \\
		&= \lb u, v \la -Du + \gamma u \times Dm_1 + \gamma m_2 \times Du \ra - u \times D^2m_1 - m_2 \times D^2 u  \rb_{\L^2_w}(s) \\
		&\quad + \alpha \lb u, \lb D(m_1+m_2), Du \rb m_2 + D^2 u + |Dm_1|^2 u \rb_{\L^2_w}(s) \\
		&= \lb u, v \la -Du + \gamma m_2 \times Du \ra \rb_{\L^2_w}(s) + \lb u, -m_2 \times D^2 u \rb_{\L^2_w}(s) \\
		&\quad + \alpha \lb u, \lb D(m_1+m_2), Du \rb m_2 \rb_{\L^2_w}(s) + \alpha \lb D^2u, u \rb_{\L^2_w}(s) + \alpha \lb u, |Dm_1|^2 u \rb_{\L^2_w}(s).
	\end{align*}
	Then, for an arbitrary $ \epsilon>0 $,
	\begin{equation}\label{pf: <u, v(G1-G2)>}
	\begin{aligned}
		\lb u, v \la -Du + \gamma m_2 \times Du \ra \rb_{\L^2_w}(s)
		&= \lb u \rho_w^\frac{1}{2}, v(-Du + \gamma m_2 \times Du) \rho_w^\frac{1}{2}  \rb_{\L^2}(s) \\
		&\leq \frac{1}{2\epsilon^2} C_v^2 (1 + \gamma^2) |u(s)|_{\L^2_w}^2 + \epsilon^2 |Du(s)|_{\L^2_w}^2,
	\end{aligned}
	\end{equation}
	and
	\begin{equation}\label{pf: <u, -m2 x D2u>}
	\begin{aligned}
		\lb u, -m_2 \times D^2u \rb_{\L^2_w}(s) 
		&= - \lb D^2u, u \times m_2 \rho_w \rb_{\L^2}(s) \\
		&= \lb Du, Du \times m_2 \rho_w + u \times D(m_2 \rho_w) \rb_{\L^2}(s) \\
		&= \lb Du, u \times Dm_2 \rb_{\L^2_w} + \lb Du, u \times m_2 \rho_w' \rho_w^{-1} \rb_{\L^2_w}(s) \\
		&\leq \frac{1}{2 \epsilon^2} \la |Dm_2(s)|_{\L^\infty}^2+w^2 \ra |u(s)|_{\L^2_w}^2 + \epsilon^2 |Du(s)|_{\L^2_w}^2.
	\end{aligned}
	\end{equation}
	Similarly,
	\begin{equation}\label{pf: <u, D(m1+m2).Du m2>}
		\alpha \lb u, \lb D(m_1+m_2), Du \rb m_2 \rb_{\L^2_w}(s)
		\leq \frac{1}{2\epsilon^2}\alpha^2 |Dm_1(s) + Dm_2(s)|_{\L^\infty}^2 |u(s)|_{\L^2_w}^2 + \frac{1}{2} \epsilon^2 |Du(s)|_{\L^2_w}^2,
	\end{equation}
	and
	\begin{equation}\label{pf: <u,D2u>}
	\begin{aligned}
		\alpha \lb u, D^2u \rb_{\L^2_w}(s) 
		&= \alpha \lb u \rho_w, D^2u \rb_{\L^2}(s) \\
		&= -\alpha \lb Du, D(u \rho_w) \rb_{\L^2}(s) \\
		&= -\alpha |Du(s)|_{\L^2_w}^2 - \alpha \lb Du, u \rho_w' \rho_w^{-1} \rb_{\L^2_w}(s) \\
		&= -\alpha |Du(s)|_{\L^2_w}^2 + \frac{1}{2} \epsilon^2 |Du(s)|_{\L^2_w}^2 + \frac{1}{2 \epsilon^2} \alpha^2 w^2 |u|_{\L^2_w}^2.
	\end{aligned}
	\end{equation}
	Also,
	\begin{equation}\label{pf: <u,|Dm1|^2 u>}
		\alpha \lb u, |Dm_1|^2u \rb_{\L^2_w}(s) 
		\leq \alpha |Dm_1(s)|_{\L^\infty}^2 |u(s)|_{\L^2_w}^2.
	\end{equation} 
	Hence,
	\begin{align*}
		U_1(s) &\leq \psi_1(s) |u(s)|_{\L^2_w}^2 + (3 \epsilon^2 - \alpha) |Du(s)|_{\L^2_w}^2,
	\end{align*}
	for the process $ \psi_1 $ given by
	\begin{equation}\label{defn: psi1}
	\begin{aligned}
		\psi_1(s) 
		&= \frac{1}{2\epsilon^2} \la C_v^2(1+\gamma^2) + |Dm_2(s)|_{\L^\infty}^2 + w^2 + \alpha^2 |Dm_1(s) + Dm_2(s)|_{\L^\infty}^2 + \alpha^2 w^2 \ra \\
		&\quad + \alpha |Dm_1(s)|_{\L^\infty}^2.
	\end{aligned}
	\end{equation}
	For $ i=1,2 $, there exists a constant $ C>0 $ such that
	\begin{equation}\label{pf: Dm1, Dm2 Linf}
	\begin{aligned}
		\E^*\left[ \int_0^T |Dm_i|^2_{\L^\infty}(t) \ dt \right]
		&\leq \E^*\left[ \int_0^T |Dm_i|^2_{\H^1}(t) \ dt \right]
		< \infty,
	\end{aligned}
	\end{equation}
	which implies $ \int_0^T \psi_1(t) \ dt < \infty $, $ \P $-a.s.

	\underline{An estimate on $ U_2 $: }
	\begin{align*}
		U_2(s) 
		&= \frac{1}{2} \lb u, S(m_1) - S(m_2) \rb_{\L^2_w}(s) \\
		&= \frac{1}{2} \lb u, \kappa^2 \left[(1-\gamma^2) D^2u - 2\gamma (u \times D^2 m_1 + m_2 \times D^2 u) \right] \rb_{\L^2_w}(s) \\
		&\quad - \gamma^2 \lb u, \kappa^2 \left[ \lb D(m_1+m_2), Du \rb m_2 + |Dm_1|^2 u \right] \rb_{\L^2_w}(s) \\
		&\quad + \frac{1}{2} \lb u, \kappa \kappa' \left[ (1-\gamma^2) Du - 2 \gamma (u \times Dm_1 + m_2 \times Du) \right] \rb_{\L^2_w}(s) \\
		&= \frac{1}{2} \lb u, \kappa \kappa' \left[ (1-\gamma^2) Du - 2 \gamma m_2 \times Du \right] \rb_{\L^2_w}(s) \\
		&\quad + \frac{1}{2} (1-\gamma^2) \lb u, \kappa^2 D^2 u \rb_{\L^2_w}(s) - \gamma \lb u, \kappa^2 m_2 \times D^2 u \rb_{\L^2_w}(s) \\
		&\quad - \gamma^2 \lb u, \kappa^2 \lb D(m_1+m_2), Du \rb m_2 \rb_{\L^2_w}(s) - \gamma^2 \lb u, \kappa^2 |Dm_1|^2 u \rb_{\L^2_w}(s).
	\end{align*}
	Again, for $ \epsilon >0 $,
	\begin{align*}
		 &\frac{1}{2}\lb u, \kappa \kappa' \left[ (1-\gamma^2) Du - 2 \gamma m_2 \times Du \right] \rb_{\L^2_w}(s) \\
		 &\leq \frac{1}{4 \epsilon^2} C_\kappa^4 \la (1-\gamma^2)^2 + 4 \gamma^2 \ra |u(s)|_{\L^2_w}^2 
		 + \frac{1}{2}\epsilon^2 |Du(s)|_{\L^2_w}^2. 
	\end{align*} 
	As in \eqref{pf: <u, -m2 x D2u>} and \eqref{pf: <u, D(m1+m2).Du m2>},
	\begin{align*}
		-\gamma \lb u, \kappa^2 m_2 \times D^2 u \rb_{\L^2_w}(s) 
		&= -\gamma \lb D^2u, \kappa^2 u \times m_2 \rho_w \rb_{\L^2}(s) \\
		&= \gamma \lb Du, Du \times m_2 \kappa^2 \rho_w + u \times D(m_2 \kappa^2 \rho_w) \rb_{\L^2}(s) \\
		&= \gamma \lb Du, \kappa^2 u \times Dm_2 + \kappa^2 u \times m_2 \rho_w' \rho_w^{-1} + 2 \kappa \kappa' u \times m_2 \rb_{\L^2_w}(s) \\
		&\leq \frac{1}{2 \epsilon^2} \gamma^2 C_\kappa^4 \la |Dm_2(s)|_{\L^\infty}^2 + w^2 + 4 \ra |u(s)|_{\L^2_w}^2 + \epsilon^2 |Du(s)|_{\L^2_w}^2, 
	\end{align*}
	and
	\begin{align*}
		- \gamma^2 \lb u, \kappa^2 \lb D(m_1+m_2), Du \rb m_2 \rb_{\L^2_w} 
		&\leq \frac{1}{2\epsilon^2}\gamma^4 C_\kappa^4 |Dm_1(s) + Dm_2(s)|_{\L^\infty}^2 |u(s)|_{\L^2_w}^2 + \frac{1}{2} \epsilon^2 |Du(s)|_{\L^2_w}^2.
	\end{align*}
	Also, 
	\begin{align*}
		- \gamma^2 \lb u, \kappa^2 |Dm_1|^2 u \rb_{\L^2_w}(s) \leq 0, \quad \forall s \in [0,T]. 
	\end{align*}
	For the remaining term in $ U_2 $, we use integration-by-parts as in \eqref{pf: <u,D2u>}:
	\begin{align*}
		&\frac{1}{2} (1-\gamma^2) \lb u, \kappa^2 D^2 u \rb_{\L^2_w} \\
		&= \frac{1}{2}(\gamma^2-1) \lb Du, D(\kappa^2 u \rho_w) \rb_{\L^2} \\
		&= \frac{1}{2}(\gamma^2-1) \left[ \lb Du, \kappa^2 Du \rb_{\L^2_w}(s) + \lb Du, \kappa^2 u \rho_w' \rho_w^{-1} + 2 \kappa \kappa' u \rb_{\L^2_w}(s) \right] \\
		&\leq -\frac{1}{2} \sum_j q_j^2 |f_j Du(s)|^2_{\L^2_w} + \frac{1}{2}\gamma^2 C_\kappa^2 |Du(s)|^2_{\L^2_w} + \frac{1}{4\epsilon^2} C_\kappa^4 \la 1-\gamma^2 \ra^2(w^2+4) \ |u(s)|^2_{\L^2_w} \\
		&\quad + \frac{1}{2}\epsilon^2 |Du(s)|^2_{\L^2_w}.
	\end{align*}
	Thus, 
	\begin{align*}
		U_2(s) 
		&\leq \psi_2(s) |u(s)|_{\L^2_w}^2 
		+ \frac{5}{2}\epsilon^2 |Du(s)|_{\L^2_w}^2 + \frac{1}{2}\gamma^2 C_\kappa^2 |Du(s)|^2_{\L^2_w} 
		-\frac{1}{2} \sum_j q_j^2 |f_j Du(s)|^2_{\L^2_w},
	\end{align*}
	where
	\begin{equation}\label{defn: psi2}
	\begin{aligned}
		\psi_2(s)
		&= \frac{1}{4 \epsilon^2} C_\kappa^4 (1-\gamma^2)^2(w^2+5) \\
		&\quad + \frac{1}{2\epsilon^2} \la \gamma^2(w^2+6) + \gamma^2 |Dm_2(s)|_{\L^\infty}^2 + \gamma^4 |Dm_1(s) + Dm_2(s)|_{\L^\infty}^2 \ra,
	\end{aligned}
	\end{equation}
	and by \eqref{pf: Dm1, Dm2 Linf}, $ \int_0^T \psi_2(t) \ dt < \infty $, $ \P $-a.s.

	\underline{An estimate on $ U_3 $:}
	\begin{align*}
		U_3(s) 
		&= \frac{1}{2} \sum_j q_j^2 \left| f_j \la G(m_1) - G(m_2) \ra \right|_{\L^2_w}^2(s) \\
		&= \frac{1}{2} \sum_j q_j^2 \left| f_j \la -Du(s) + \gamma u(s) \times Dm_1(s) + \gamma m_2(s) \times Du(s) \ra \right|_{\L^2_w}^2 (s),
	\end{align*}
	where for every $ j \geq 1 $, 
	\begin{align*}
		&f_j^2 \left| G(m_1) - G(m_2) \right|^2(s,x) \\
		&= f_j^2 \left| -Du + \gamma u \times Dm_1 + \gamma m_2 \times Du \right|^2(s,x) \\
		&= f_j^2 \la |Du|^2 + \gamma^2 |m_2 \times Du|^2 + \gamma^2 |u \times Dm_1|^2 + 2 \gamma \lb -Du + \gamma m_2 \times Du ,u \times Dm_1 \rb \ra (s,x) \\
		&\leq (1+\gamma^2)|f_j Du(s,x)|^2 + q_j^2\gamma^2 \ |Dm_1(s)|^2_{\L^\infty} \ |f_j u(s,x)|^2 \\
		&\quad + \frac{2}{\epsilon^2} \gamma^2 \la 1+ \gamma^2 \ra |Dm_1(s)|^2_{\L^\infty} \ |f_j^2 u(s,x)|^2 + \epsilon^2 |Du(s,x)|^2.
	\end{align*}
	Hence, 
	\begin{align*}
		U_3(s) 
		&\leq \frac{1}{2} (1+\gamma^2) \sum_j q_j^2 |f_j Du(s)|^2_{\L^2_w} + \gamma^2  C_\kappa^2 \la \frac{1}{2} + \frac{1}{\epsilon^2}(1+\gamma^2) C_\kappa^2 \ra \ |Dm_1(s)|^2_{\L^\infty} |u(s)|^2_{\L^2_w} \\
		&\quad + \frac{1}{2} \epsilon^2 |Du(s)|^2_{\L^2_w} \\
		&\leq \psi_3(s) |u(s)|^2_{\L^2_w} + \frac{1}{2} \sum_j q_j^2 |f_j Du(s)|^2_{\L^2_w} + \frac{1}{2}\gamma^2 C_\kappa^2 |Du(s)|^2_{\L^2_w} + \frac{1}{2} \epsilon^2 |Du(s)|^2_{\L^2_w},
	\end{align*}
	where the second term on the right-hand side cancels with the corresponding term in $ U_2(s) $ and 
	$ \psi_3(s) =  \gamma^2  C_\kappa^2 \la \frac{1}{2} + \frac{1}{\epsilon^2}(1+\gamma^2) C_\kappa^2 \ra  |Dm_1(s)|^2_{\L^\infty} $ is similarly integrable $ \P $-a.s. 
	
	We have
	\begin{align*}
		U_1(s) + U_2(s) + U_3(s)
		&\leq \la \psi_1(s)+ \psi_2(s) + \psi_3(s) \ra |u(s)|_{\L^2_w}^2 
		+ \la 6\epsilon^2 + \gamma^2 C_\kappa^2  - \alpha \ra |Du(s)|_{\L^2_w}^2.
	\end{align*}
	We can choose a sufficiently small $ \epsilon > 0 $ such that under the assumption \eqref{defn: N1p, N2p>0}, 
	\begin{align*}
		\la 6 \epsilon^2 + \gamma^2 C_\kappa^2  - \alpha \ra < 0,
	\end{align*}
	which implies
	\begin{align*}
		U_1(s) + U_2(s) + U_3(s) 
		&\leq \la \psi_1(s)+ \psi_2(s) + \psi_3(s) \ra |u(s)|_{\L^2_w}^2 = \psi(s) \ |u(s)|^2_{\L^2_s}.
	\end{align*}
	Therefore, by \eqref{d|m1-m2|_Lw2},
	\begin{align*}
		\frac{1}{2}d|u(t)|_{\L^2_w}^2 \leq \psi(t) |u(t)|_{\L^2_w}^2 \ dt + \lb u(t), \left[ G(m_1(t)) - G(m_2(t)) \right] dW(t)  \rb_{\L^2_w}.
	\end{align*}	

	Define the process $ Y $ by
	\begin{align*}
		Y(t) := \frac{1}{2}|u(t)|_{\L^2_w}^2 e^{-2\int_0^t \psi(s) \ ds}, \quad t \in [0,T].
	\end{align*}
	Then, 
	\begin{align*}
		dY(t)
		&= \lb \frac{1}{2}d|u(t)|_{\L^2_w}^2, e^{-2\int_0^t \psi(s) \ ds} \rb
		+ \lb \frac{1}{2}|u(t)|_{\L^2_w}^2, de^{-2\int_0^t \psi(s) \ ds} \rb \\
		&\quad + \lb \frac{1}{2}d|u(t)|_{\L^2_w}^2, de^{-2\int_0^t \psi(s) \ ds} \rb \\
		&\leq e^{-2\int_0^t \psi(s) \ ds} \lb u(t), \left[ G(m_1(t)) - G(m_2(t)) \right] dW(t)  \rb_{\L^2_w}.
	\end{align*}
	Since $ |u(t)|_{\L^\infty} \leq 2 $ $ \P $-a.s. and 
	there exists a constant $ C $ such that 
	\begin{align*}
		\E \left[ \sup_{t \in [0,T]} \la |Dm_1(t)|_{\L^2}^2 + |Dm_2(t)|_{\L^2}^2 \ra \right] \leq C, 
	\end{align*}
	the process 
	\begin{align*}
		M(t) := \int_0^t e^{-2\int_0^s \psi(r) \ dr} \lb u(s), \left[ G(m_1(s)) - G(m_2(s)) \right] dW(s) \rb_{\L^2_w}.
	\end{align*}
	is a martingale, and then
	\begin{align*}
		\E[Y(t)] \leq Y(0) + \E[M(t)] = Y(0), \quad t \in [0,T]. 
	\end{align*}
	By the definition of $ Y(t) $, if $ Y(0) = m_1(0) - m_2(0) = 0 $, then 
	\begin{align*}
		|u(t)|_{\L^2_w}^2 = 0, \quad \P\text{-a.s.}
	\end{align*}
	for $ t \in [0,T] $,
	proving pathwise uniqueness of the solution of \eqref{eq_ito}. 
	By the Yamada-Watanabe Theorem, the uniqueness in law follows. 
	
\appendix
\section{}

\subsection{Some calculations in discrete spaces}
	\begin{enumerate}[(a)]
		\item 
		discrete integration-by-parts:
		\begin{align*}
			\sum_{x \in \Z_h} \langle u(x), \partial^{h} w(x) \rangle = - \sum_{x \in \Z_h} \langle \partial^{h} u^-(x), w(x) \rangle,
		\end{align*}
		for $ u,w \in \H_h^1 = \{ v \in \L_h^2 : |\partial^h v|_{\L_h^2} < \infty \} $ with appropriate decay properties. 
		In particular, 
		\begin{align*}
			\lb \partial^h u, \partial^h w \rb_{\L_h^2} = - \lb \Delta^h u, w \rb_{\L_h^2}.
		\end{align*}

		\item
		discrete expansion of $ \langle u, \Delta^h u \rangle $ and $ \langle u, \partial^h u \rangle $: for any $u$ satisfying that $|u(x)|=1$ for all $x\in \Z_h$,
		\begin{equation}\label{dis udotDelu}
		\begin{aligned}
			\langle u(x), \Delta^h u (x) \rangle 
			&= - \frac{1}{2} \left( |\partial^{h} u(x)|^2 + |\partial^{h} u^-(x)|^2 \right) \leq 0, \\
			\langle u(x), \partial^h u(x) \rangle 
			&= -\frac{h}{2} |\partial^h u(x)|^2 \leq 0.
		\end{aligned}
		\end{equation}
		
		\item
		product rule:
		\begin{align*}
			\partial^{h} (fu)
			&= (\partial^{h} f) u(x) + f^+ \partial^{h} u 
			= (\partial^{h} f) u^+ + f \partial^{h} u(x)
		\end{align*}
		for $ f $ scalar-valued and $ u $ vector-valued; 
		similarly for $ f $ and $ u $ both scalar-valued, and for $ \langle f, u \rangle $ and $ u \times u $ when $ f,u $ are vector-valued.
		
		\item
		$ L_h^2 $-norm of $ \Delta^h u $:
		\begin{align*}
			|\Delta^h u|_{\L_h^2} = |\partial^{h} (\partial^{h} u)^-|_{\L_h^2} = |\partial^{h}(\partial^{h} u)|_{\L_h^2}.
		\end{align*}
	\end{enumerate}

	\begin{lemma}[{\cite[Chapter 1, Theorem 3]{Zhou_FDbook}}]\label{Lemma: discrete interpolation ineq}
		For $ u^h: \Z_h \to \mathbb{R} $, 
		\begin{align*}
			\left| (\partial^{h})^k u^h \right|_{\L_h^p} \leq C |u^h|_{\L_h^2}^{1- \frac{k+ \frac{1}{2} - \frac{1}{p}}{n}}  \left| (\partial^{h})^n u^h \right|_{\L_h^2}^{\frac{k+\frac{1}{2} - \frac{1}{p}}{n}},
		\end{align*}
		for $ p \in [2,\infty] $, $ k \in [0,n) $ and $ C $ is a constant independent of $ u^h $.
	\end{lemma}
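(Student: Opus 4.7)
The plan is to derive this discrete Gagliardo-Nirenberg estimate from a basic endpoint inequality together with H\"older interpolation and iteration on the order of derivatives, mimicking the classical proof in the continuous setting.

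First, I would establish the endpoint estimate $|u^h|_{\L_h^\infty}^2 \leq 2|u^h|_{\L_h^2}|\partial^h u^h|_{\L_h^2}$, corresponding to $k=0$, $n=1$, $p=\infty$. Fix $x_0 \in \Z_h$ and telescope
\[
|u^h(x_0)|^2 = \sum_{y \in \Z_h,\, y \leq x_0} \bigl(|u^h(y)|^2 - |u^h(y-h)|^2\bigr),
\]
assuming $u^h \to 0$ at $-\infty$ (the general case reduces to this by truncation, since the inequality is only needed for $u^h$ with finite $\L_h^2$ and discrete Sobolev norms). The identity $|u^h(y)|^2 - |u^h(y-h)|^2 = h\,\partial^h u^h(y-h)(u^h(y)+u^h(y-h))$ together with Cauchy--Schwarz yields the claim.

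Next, for $p \in [2,\infty]$ with $k=0$, $n=1$, H\"older's inequality gives $|u^h|_{\L_h^p}^p \leq |u^h|_{\L_h^\infty}^{p-2}|u^h|_{\L_h^2}^2$; inserting the endpoint bound yields
\[
|u^h|_{\L_h^p} \leq C\,|u^h|_{\L_h^2}^{1/2+1/p}\,|\partial^h u^h|_{\L_h^2}^{1/2-1/p},
\]
which matches the claimed exponents.

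For general $k$ and $n$ with $0 \leq k < n$, I would proceed by induction on $n$, applying the $k=0$, $n=1$ case to the sequence $(\partial^h)^k u^h$ and combining with the inductive hypothesis applied to $(\partial^h)^{k+1}u^h$, then balancing via Young's inequality. The main obstacle is combinatorial bookkeeping: one must verify that the interpolation exponents compose so that the final exponent on $|(\partial^h)^n u^h|_{\L_h^2}$ is precisely $\frac{k+\frac{1}{2}-\frac{1}{p}}{n}$, while keeping all constants independent of $h$. A cleaner alternative is to construct a spline interpolant $\tilde u:\R\to\R$ of order at least $n$ satisfying $|D^j \tilde u|_{L^r(\R)} \sim |(\partial^h)^j u^h|_{\L_h^r}$ for $0 \leq j \leq n$ and $r \in \{2,p\}$, apply the continuous Gagliardo--Nirenberg inequality to $\tilde u$, and transfer back; scale invariance of both sides under the rescaling $x \mapsto x/h$ then ensures that the constants coming from the spline construction are independent of $h$.
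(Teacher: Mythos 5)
The paper does not prove this lemma at all: it is quoted verbatim from Zhou's book (\cite[Chapter 1, Theorem 3]{Zhou_FDbook}), so there is no internal proof to compare against. Judged on its own terms, your outline is a correct reconstruction of the standard argument. The endpoint step is sound: for fixed $h$, any $u^h\in\L_h^2$ satisfies $u^h(y)\to 0$ as $y\to-\infty$, so the telescoping needs no truncation, and Cauchy--Schwarz gives $|u^h|_{\L_h^\infty}^2\le 2|u^h|_{\L_h^2}|\partial^h u^h|_{\L_h^2}$; the H\"older step then yields exactly the claimed exponents for $k=0$, $n=1$, and these two steps already cover every instance the paper actually uses (Remark~\ref{rem2}(c) and \eqref{|D+m|_Linfty est}). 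The one ingredient you leave implicit, and which is the real content of the ``combinatorial bookkeeping,'' is the pure $L^2$ interpolation between derivative orders: from discrete summation by parts, $|(\partial^h)^j u^h|_{\L_h^2}^2\le |(\partial^h)^{j-1}u^h|_{\L_h^2}\,|(\partial^h)^{j+1}u^h|_{\L_h^2}$ (shifts are harmless since $|u^-|_{\L_h^2}=|u|_{\L_h^2}$), whence by iteration $|(\partial^h)^j u^h|_{\L_h^2}\le C|u^h|_{\L_h^2}^{1-j/n}|(\partial^h)^n u^h|_{\L_h^2}^{j/n}$; substituting this with $j=k$ and $j=k+1$ into the one-step $L^p$ estimate applied to $(\partial^h)^k u^h$ makes the exponent on $|(\partial^h)^n u^h|_{\L_h^2}$ come out to $(1/2+1/p)\tfrac{k}{n}+(1/2-1/p)\tfrac{k+1}{n}=\tfrac{k+1/2-1/p}{n}$, as required, with all constants manifestly $h$-independent because no step introduces a factor of $h$. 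Your spline alternative would also work but is heavier than needed; for orders beyond $2$ it requires building higher-order interpolants with two-sided norm equivalences, which is more delicate than the purely discrete route.
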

	
\subsection{Some tightness results}
	\begin{lemma}[{\cite[Theorem 2.1]{FlandoliGatarek}}]\label{Lemma: comp embedding in Lp}
		Let $ B_0 \subset B \subset B_1 $ be Banach spaces, $ B_0 $ and $ B_1 $ reflexive, with compact embedding of $ B_0 $ in $ B $. 
		Let $ p \in (1,\infty) $ and $ \alpha \in (0,1) $ be given. 
		Let $ X $ be the space 
		\begin{align*}
			X = L^p(0,T; B_0) \cap W^{\alpha,p}(0,T;B_1)
		\end{align*}
		endowed with the natural norm. Then the embedding of $ X $ in $ L^p(0,T; B) $ is compact. 
	\end{lemma}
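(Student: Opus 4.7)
The plan is to establish this Aubin--Lions--Simon type compactness by a two-step reduction: first cut the problem down to strong convergence in $L^p(0,T;B_1)$ using Ehrling's interpolation inequality, then deduce that convergence from the fractional-in-time regularity via the Kolmogorov--Riesz--Fr\'echet criterion. Let $(u_n) \subset X$ be bounded. Reflexivity of $L^p(0,T;B_0)$ gives a subsequence with $u_n \rightharpoonup u$ weakly in $L^p(0,T;B_0)$; by replacing $u_n$ with $u_n - u$ I may assume $u\equiv 0$, and the goal becomes $u_n\to 0$ strongly in $L^p(0,T;B)$.

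Since $B_0 \hookrightarrow B$ is compact and $B\hookrightarrow B_1$ is continuous, Ehrling's lemma supplies, for every $\epsilon>0$, a constant $C_\epsilon$ such that
\begin{equation*}
\|w\|_B \leq \epsilon\|w\|_{B_0} + C_\epsilon \|w\|_{B_1}, \qquad w\in B_0.
\end{equation*}
Raising to the $p$-th power, integrating in $t$, and using the uniform bound on $\|u_n\|_{L^p(0,T;B_0)}$ bounds $\|u_n\|_{L^p(0,T;B)}$ by $\epsilon M + C_\epsilon \|u_n\|_{L^p(0,T;B_1)}$. Since $\epsilon$ is arbitrary, it suffices to show strong convergence in $L^p(0,T;B_1)$.

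To obtain that convergence I would apply the Kolmogorov--Riesz--Fr\'echet criterion in the Bochner space $L^p(0,T;B_1)$. The three hypotheses are verified as follows. Uniform boundedness in $L^p(0,T;B_1)$ comes from the continuous embedding $B_0\hookrightarrow B_1$. Equicontinuity of time-translates follows from the $W^{\alpha,p}$ regularity via the Gagliardo seminorm: for $h\in(0,T)$ one has the uniform bound
\begin{equation*}
\|\tau_h u_n - u_n\|_{L^p(0,T-h;B_1)} \leq C|h|^\alpha \|u_n\|_{W^{\alpha,p}(0,T;B_1)}.
\end{equation*}
Finally, pointwise precompactness in $B_1$ follows because the composition $B_0\hookrightarrow B\hookrightarrow B_1$ of a compact and a continuous map is compact. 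Combining these three conditions yields relative compactness in $L^p(0,T;B_1)$; the weak limit $0$ then coincides with any strong limit by uniqueness, so $u_n\to 0$ in $L^p(0,T;B_1)$ along a subsequence, closing the argument.

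The main obstacle is the low-regularity regime $\alpha\leq 1/p$, where $W^{\alpha,p}(0,T;B_1)$ does not embed into any H\"older-in-time space, so one cannot short-circuit the argument with Arzel\`a--Ascoli. In that regime the time-translate estimate must be proved directly from the Gagliardo double integral, and the pointwise precompactness condition of the Kolmogorov--Riesz criterion requires a careful mollification in time (with the approximation error controlled uniformly in $n$ by the fractional seminorm) in order to pass to a setting where values at individual times are well-defined and lie in a compact set of $B_1$. This is essentially the strategy in \cite{FlandoliGatarek}.
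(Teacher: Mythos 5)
This lemma is quoted in the paper from \cite{FlandoliGatarek} without its proof, and your argument is essentially the one given there (following Simon): Ehrling's interpolation inequality reduces the problem to strong compactness in $L^p(0,T;B_1)$, which is then deduced from the fractional-in-time regularity via the translation estimate and the Kolmogorov--Riesz--Fr\'echet criterion for Bochner spaces. Your sketch is correct, including your own caveat that the pointwise-precompactness hypothesis cannot be verified literally at individual times (since $\|u_n(t)\|_{B_0}$ is only controlled in $L^p_t$) and must instead be handled through the integrated form of the criterion or a time mollification controlled uniformly by the Gagliardo seminorm.
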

	
	\begin{lemma}[{\cite[Theorem 2.2]{FlandoliGatarek}}]\label{Lemma: comp embedding in C}
		If $ B_1 \subset \widetilde{B} $ are two Banach spaces with compact embedding, and the real numbers $ \alpha \in(0,1) $, $ p>1 $ satisfy $ \alpha p > 1 $, 
		then the space $ W^{\alpha,p}(0,T; B_1) $ is compactly embedded into $ \mathcal{C}([0,T]; \widetilde{B}) $. 
	\end{lemma}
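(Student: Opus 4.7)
The plan is to reduce this to the Arzel\`a--Ascoli theorem in $\mathcal{C}([0,T];\widetilde{B})$. Let $\mathcal{K}\subset W^{\alpha,p}(0,T;B_1)$ be bounded, say $\sup_{u\in\mathcal{K}}\|u\|_{W^{\alpha,p}(0,T;B_1)}\le R$. To obtain precompactness in $\mathcal{C}([0,T];\widetilde{B})$, one must verify (i) uniform equicontinuity of $\mathcal{K}$ in $\widetilde{B}$ and (ii) pointwise relative compactness, i.e.\ for each $t\in[0,T]$ the set $\{u(t):u\in\mathcal{K}\}$ is relatively compact in $\widetilde{B}$.

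The key intermediate step, and where the hypothesis $\alpha p>1$ enters, is the continuous Morrey-type embedding
\begin{equation*}
W^{\alpha,p}(0,T;B_1)\hookrightarrow\mathcal{C}^{\alpha-1/p}([0,T];B_1).
\end{equation*}
Recall that the Sobolev--Slobodeckij norm reads
\begin{equation*}
\|u\|_{W^{\alpha,p}(0,T;B_1)}^p = \|u\|_{L^p(0,T;B_1)}^p + \int_0^T\!\!\int_0^T\frac{\|u(t)-u(s)\|_{B_1}^p}{|t-s|^{1+\alpha p}}\,ds\,dt.
\end{equation*}
To obtain the Morrey bound I would, given $0\le s<t\le T$, compare $u(t)$ and $u(s)$ with the average $\bar u_I=|I|^{-1}\int_I u\,d\tau$ over a suitable interval $I$ of length comparable to $|t-s|$ containing both endpoints, writing $u(t)-u(s)=(u(t)-\bar u_I)-(u(s)-\bar u_I)$ and controlling each term by H\"older's inequality applied to the Slobodeckij difference quotient. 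The exponent computation
\begin{equation*}
\Big(\int_I\!\!\int_I \tfrac{\|u(\tau)-u(\sigma)\|_{B_1}^p}{|\tau-\sigma|^{1+\alpha p}}\,d\sigma\,d\tau\Big)^{1/p}\,|I|^{\alpha-1/p}
\end{equation*}
uses precisely $\alpha p>1$ to make the integration against $|\tau-\sigma|^{-(1+\alpha p)}$ dominate correctly after rescaling, yielding a continuous representative with
\begin{equation*}
\|u(t)-u(s)\|_{B_1}\le C|t-s|^{\alpha-1/p}\|u\|_{W^{\alpha,p}(0,T;B_1)},\qquad \|u\|_{L^\infty(0,T;B_1)}\le C'\|u\|_{W^{\alpha,p}(0,T;B_1)}.
\end{equation*}

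From this, both Arzel\`a--Ascoli conditions follow immediately by combining with the given embedding $B_1\hookrightarrow\widetilde{B}$. Continuity of the embedding converts the H\"older estimate in $B_1$ into uniform equicontinuity of $\mathcal{K}$ in $\widetilde{B}$ with modulus $C_1R|t-s|^{\alpha-1/p}$. Compactness of the embedding converts the pointwise bound $\sup_{u\in\mathcal{K}}\|u(t)\|_{B_1}\le C'R$ into pointwise relative compactness of $\{u(t):u\in\mathcal{K}\}$ in $\widetilde{B}$. Arzel\`a--Ascoli then yields precompactness of $\mathcal{K}$ in $\mathcal{C}([0,T];\widetilde{B})$, completing the proof.

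The main obstacle is the vector-valued Morrey embedding itself; in the scalar case this is classical (Campanato/Morrey), and the argument transfers verbatim to Banach-space-valued functions because only norms in the target space $B_1$ are manipulated, with no use of scalar structure. Care is required to fix a continuous representative (the argument first produces a uniformly continuous function on a dense set, and then extends it), but this is a standard measure-theoretic step.
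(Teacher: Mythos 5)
Your argument is correct: the paper does not prove this lemma at all but simply cites \cite[Theorem 2.2]{FlandoliGatarek}, and your route --- the vector-valued Morrey/Campanato embedding $W^{\alpha,p}(0,T;B_1)\hookrightarrow \mathcal{C}^{\alpha-1/p}([0,T];B_1)$ for $\alpha p>1$, followed by Arzel\`a--Ascoli using equicontinuity in $\widetilde{B}$ and pointwise relative compactness from the compact embedding $B_1\hookrightarrow\widetilde{B}$ --- is precisely the standard proof of that cited result. The only step needing care is the one you flag, namely obtaining the continuous representative; the cleanest version compares averages over nested dyadic intervals (where H\"older gives $\|\bar u_I-\bar u_J\|_{B_1}\le C|I|^{\alpha-1/p}[u]_{W^{\alpha,p}(I;B_1)}$ and the series converges exactly because $\alpha-1/p>0$), rather than comparing $u(t)$ itself to an average, since the latter is a priori only controlled for a.e.\ $t$.
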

	
	\begin{lemma}[{\cite[Corollary 19]{Simon_sobolev}}]\label{Lemma: cont embedding between W}
		Let $ I $ be an either bounded or unbounded interval of $ \R $. 
		Let $ E $ be a Banach space. 
		Suppose $ s \geq r $, $ p \leq q $ and $ s- \frac{1}{p} \geq r - \frac{1}{q} $ for $ 0< r \leq s <1 $ and $ 1 \leq p \leq q \leq \infty $. 
		Then, 
		\begin{align*}
			W^{s,p}(I; E) \hookrightarrow W^{r,q}(I; E). 
		\end{align*}
	\end{lemma}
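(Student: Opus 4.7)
The plan is to reduce the general embedding to two elementary pieces and then combine them. After extending $u\in W^{s,p}(I;E)$ to all of $\R$ by a bounded $E$-valued extension operator (available for both bounded and unbounded intervals, constructed by reflection and cutoff), it suffices to establish the embedding on $\R$. The statement admits a natural decomposition: Case (a) handles movement along the ``vertical'' direction, keeping $p=q$ but lowering $s$ to $r$, while Case (b) handles movement along the critical ``scaling line'' where $s - 1/p = r - 1/q$ but $p < q$. Any admissible triple $(s,p,r,q)$ can then be reached by chaining these two, using an intermediate pair $(s',q)$ on the line $s' - 1/q = s - 1/p$ with $r \leq s' \leq s$, so that $W^{s,p} \hookrightarrow W^{s',q}$ via (b) and $W^{s',q} \hookrightarrow W^{r,q}$ via (a).

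For Case (a), $p=q$ and $s>r$, I would argue directly from the Gagliardo seminorm
\[
 [u]_{W^{\sigma,p}}^p \;=\; \int_\R\!\!\int_\R \frac{\|u(t)-u(\tau)\|_E^p}{|t-\tau|^{1+\sigma p}}\,dt\,d\tau,
\]
splitting the double integral into the near-diagonal region $\{|t-\tau|\le 1\}$ and the far region $\{|t-\tau|>1\}$. On the near piece, $|t-\tau|^{-(1+rp)} \le |t-\tau|^{-(1+sp)}$ since $|t-\tau|\le 1$ and $s\ge r$, so this term is controlled by $[u]_{W^{s,p}}^p$. On the far piece, estimating $\|u(t)-u(\tau)\|_E^p \le 2^{p-1}(\|u(t)\|_E^p + \|u(\tau)\|_E^p)$ and integrating out one variable against the integrable tail $|t-\tau|^{-(1+rp)}$ yields a multiple of $\|u\|_{L^p(\R;E)}^p$.

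Case (b), the Sobolev-type scaling embedding, is the core of the result. My preferred route is via real interpolation: for $0<\sigma<1$ and $1\le p<\infty$, the identification $W^{\sigma,p}(\R;E) = \bigl(L^p(\R;E),W^{1,p}(\R;E)\bigr)_{\sigma,p}$ holds with equivalent norms (proved by comparing the $K$-functional with finite-difference expressions). Combined with the one-dimensional endpoint embedding $W^{1,p}(\R;E) \hookrightarrow L^\infty(\R;E)$, which follows from the vector-valued fundamental theorem of calculus, the reiteration and interpolation of embeddings theorems then give $W^{s,p}(\R;E) \hookrightarrow L^q(\R;E)$ for $q$ determined by $s - 1/p = -1/q$, and the same scheme lifts to give the $W^{r,q}$ version by interpolating between pairs on the scaling line. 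An alternative more hands-on approach would use the Besov identification $W^{\sigma,p}=B^{\sigma}_{p,p}$ and a Littlewood–Paley/Calderón representation $u=I_\sigma g$ with $\|g\|_{L^p(\R;E)}\lesssim [u]_{W^{\sigma,p}}$, after which the pointwise inequality $\|I_\sigma g(x)\|_E \le I_\sigma(\|g(\cdot)\|_E)(x)$ reduces matters to the scalar Hardy–Littlewood–Sobolev inequality applied to $\|g(\cdot)\|_E$.

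The principal obstacle is Case (b) in the Banach-valued setting. The scalar ingredients (Hardy–Littlewood–Sobolev, real interpolation of $L^p$-spaces) all extend verbatim to $E$-valued functions because the relevant estimates are pointwise in the norm $\|\cdot\|_E$, but the identification of $W^{s,p}(\R;E)$ with an interpolation or Besov space independent of geometric assumptions on $E$ (no UMD structure, no Hilbert-space hypothesis) relies on the fact that the Slobodeckij seminorm is already an intrinsic, geometry-free object equivalent to a $K$-functional; verifying this equivalence carefully is where the technical weight lies. Once (b) is in hand, combining it with (a) by the one-step interpolation along the scaling line completes the proof.
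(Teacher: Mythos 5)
The paper does not prove this lemma at all: it is quoted verbatim from Simon \cite[Corollary~19]{Simon_sobolev}, so there is no in-paper argument to compare against. Judged on its own terms, your reduction is the standard one and most of it is sound: the extension step, the factorization through the intermediate exponent $s'=s-\tfrac1p+\tfrac1q$ (note that $r\le s'\le s$ is exactly equivalent to the hypothesis $s-\tfrac1p\ge r-\tfrac1q$ together with $p\le q$), and the near/far splitting of the Gagliardo seminorm in Case~(a) are all correct and complete.

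The genuine gap is in your ``preferred'' route for Case~(b). Interpolating the identity $L^p\to L^p$ with the endpoint embedding $W^{1,p}(\R;E)\hookrightarrow L^\infty(\R;E)$ gives
\[
W^{s,p}=\bigl(L^p,W^{1,p}\bigr)_{s,p}\hookrightarrow \bigl(L^p,L^\infty\bigr)_{s,p}=L^{q^*,p},\qquad \frac{1}{q^*}=\frac{1-s}{p}=\frac1p-\frac{s}{p},
\]
whereas the scaling line requires $\frac1q=\frac1p-s$. Since $\frac{1}{q^*}-\frac1q=s\bigl(1-\frac1p\bigr)\ge 0$, one gets $q^*\le q$ with equality only for $p=1$; on an unbounded interval $L^{q^*}\not\hookrightarrow L^q$ for $q>q^*$, so naive interpolation of embeddings proves only a strictly subcritical statement and cannot close the case of equality $s-\tfrac1p=r-\tfrac1q$, which the lemma explicitly allows (and which is the only hard case after your Case~(a)). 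This is a known failure mode: the fractional Sobolev embedding is not reachable by one application of the exact interpolation functor to the endpoint embeddings. Your fallback route is the one that actually works: identify $W^{\sigma,p}=B^{\sigma}_{p,p}$ and use the blockwise Nikolskii inequality $\|\Delta_j u\|_{L^q(\R;E)}\lesssim 2^{j(1/p-1/q)}\|\Delta_j u\|_{L^p(\R;E)}$ (Young's inequality against a scalar kernel, hence valid for arbitrary $E$), which yields $B^{s}_{p,p}\hookrightarrow B^{s'}_{q,p}\hookrightarrow B^{s'}_{q,q}$ on the critical line using $\ell^p\hookrightarrow\ell^q$. You should also treat the endpoint $q=\infty$ separately (a Morrey-type estimate, since $W^{r,\infty}$ is a H\"older-type seminorm), as the Lorentz/Besov bookkeeping degenerates there. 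With Case~(b) rerouted in this way the proof closes; as written, the designated main argument does not reach the stated conclusion.
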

	
	In addition, we verify the continuous embedding 
	\begin{equation}\label{embed: Walpha,4 in Wbeta,2}
		W^{\alpha, 4}(0,T; \L^2_w) \hookrightarrow W^{\beta,2}(0,T; \L^2_w), \quad w \geq 1, \ \alpha \in \la \frac{1}{4}, \frac{1}{2} \ra,\  \beta = \alpha - \frac{1}{4}.
	\end{equation}
	Indeed, for $ u \in W^{\alpha,4}(0,T; \L^2_w) $, 
	\begin{align*}
		|u|_{W^{\beta,2}(0,T;\L^2_w)} 
		&= \left[ \int_0^T |u(t)|_{\L^2_w}^2 \ dt + \int_0^T \int_0^T \frac{|u(t)-u(s)|_{\L^2_w}^2}{|t-s|^{1+2\beta}} \ dt \ ds \right]^\frac{1}{2} \\
		&\leq \left[ \la \int_0^T |u(t)|_{\L^2_w}^4 \ dt\ra^{\frac{1}{2}} T^\frac{1}{2} + \la \int_0^T \int_0^T \frac{|u(t)-u(s)|_{\L^2_w}^4}{|t-s|^{2+4\beta}} \ dt \ ds \ra^{\frac{1}{2}} T \right]^\frac{1}{2} \\
		&\leq \left[ 2 T \int_0^T |u(t)|_{\L^2_w}^4 \ dt + 2 T^2 \int_0^T \int_0^T \frac{|u(t)-u(s)|_{\L^2_w}^4}{|t-s|^{2+4\beta}} \ dt \ ds  \right]^{\frac{1}{4}} \\
		&\leq \la 2 \max\{ T, T^2 \} \ra^\frac{1}{4} \left[ \int_0^T |u(t)|_{\L^2_w}^4 \ dt + \int_0^T \int_0^T \frac{|u(t)-u(s)|_{\L^2_w}^4}{|t-s|^{1+4\alpha}} \ dt \ ds \right]^\frac{1}{4} \\
		&= \la 2 \max\{ T, T^2 \} \ra^\frac{1}{4} |u|_{W^{\alpha,4}(0,T; \L^2_w)}, 
	\end{align*}
	where the second inequality holds by $ \sqrt{a} + \sqrt{b} \leq \sqrt{2a + 2b} $ for $ a,b \geq 0 $.


\end{document}